\documentclass[12pt,reqno]{amsart}
\usepackage{indentfirst, amssymb, amsmath, amsthm, mathrsfs, setspace, indentfirst, enumerate,  mathrsfs, amsmath, amsthm}
\usepackage[bookmarksnumbered, colorlinks, plainpages]{hyperref}
\usepackage{mathrsfs}
\usepackage{cite}
\usepackage{tikz}
\usepackage{graphicx}
\usetikzlibrary{arrows.meta,calc,decorations.pathreplacing,positioning}
\usetikzlibrary{shapes.geometric}
\usepackage{float}
\usepackage{pgfplots}
\usetikzlibrary{shadows}

\newtheorem*{theoA}{Theorem A}
\newtheorem*{theoB}{Theorem B}
\newtheorem*{theoC}{Theorem C}
\newtheorem*{theoD}{Theorem D}
\newtheorem*{theoE}{Theorem E}
\newtheorem*{theoF}{Theorem F}

\newtheorem*{cor A}{Corollary A}
\newtheorem*{cor B}{Corollary B}

\newtheorem{theo}{Theorem}[section]
\newtheorem{lem}{Lemma}[section]
\newtheorem{cor}{Corollary}[section]

\newtheorem{prob}{Problem}[section]

\newtheorem{defi}{Definition}[section]
\newtheorem{rem}{Remark}[section]

\newcommand{\ol}{\overline}
\newcommand{\be}{\begin{equation}}
\newcommand{\ee}{\end{equation}}
\newcommand{\beas}{\begin{eqnarray*}}
\newcommand{\eeas}{\end{eqnarray*}}
\newcommand{\bea}{\begin{eqnarray}}
\newcommand{\eea}{\end{eqnarray}}

\numberwithin{equation}{section}
\begin{document}
\title[M\MakeLowercase{ultidimensional analogues of the improved} B\MakeLowercase{ohr's} \MakeLowercase{inequality}....]{\LARGE M\MakeLowercase{ultidimensional analogues of the improved} B\MakeLowercase{ohr's} \MakeLowercase{inequality for shifted polydisks}}
\date{}
\author[S. M\MakeLowercase{ajumder}, A. B\MakeLowercase{anerjee}, J. B\MakeLowercase{anerjee} \MakeLowercase{and} S. P\MakeLowercase{anja} ]{S\MakeLowercase{ujoy} M\MakeLowercase{ajumder}$^1$, A\MakeLowercase{bhijit} B\MakeLowercase{anerjee}$^2$, J\MakeLowercase{hilik} B\MakeLowercase{anerjee}$^3$$^*$ \MakeLowercase {and} S\MakeLowercase{hantanu} P\MakeLowercase{anja}$^4$ }

\address{$^1$ Department of Mathematics, Raiganj University, Raiganj, West Bengal-733134, India.}
\email{sm05math@gmail.com, sjm@raiganjuniversity.ac.in}

\address{$^{2}$ Department of Mathematics, University of Kalyani, West Bengal 741235, India.}
\email{abanerjee\_kal@yahoo.co.in, abhijitbanerjee@klyuniv.ac.in}

\address{$^{3}$ Department of Mathematics, University of Kalyani, West Bengal 741235, India.}
\email{jhilikbanerjee38@gmail.com}

\address{$^4$ Department of Mathematics, University of Kalyani, West Bengal 741235, India.}
\email{panjasantu07@gmail.com, shantanumath26@klyuniv.ac.in}

\renewcommand{\thefootnote}{}
\footnote{2020 \emph{Mathematics Subject Classification}: 30A10, 30C45, 30C62, 30C75, 30H05.}
\footnote{\emph{Key words and phrases}: Bounded Holomorphic function in a  Polydisk, pluriharmonic mapping, Multidimensional Bohr inequality.}
\footnote{*\emph{Corresponding Author}: Jhilik Banerjee.}
\renewcommand{\thefootnote}{\arabic{footnote}}
\setcounter{footnote}{0}
\begin{abstract}
In this article, we investigate the Bohr phenomenon for holomorphic functions defined on a general simply connected domain in $\mathbb{C}^n$. We improve the existing results of Evdoridis et al. (Improved Bohr’s Inequality for Shifted Disks, Results in Mathematics, 76, 14 (2021), https://doi.org/10.1007/s00025-020-01325-x) for a broader class of holomorphic functions in $\mathbb{C}^n$. Furthermore, we consider pluriharmonic mappings defined on a polydisk containing the unit polydisk $\mathbb{P}\Delta(0_n;1_n)$ and establish a Bohr-type inequality for this class of mappings.
\end{abstract}
\thanks{Typeset by \AmS -\LaTeX}
\maketitle
\section{\bf Introduction}
The study of coefficient estimates for bounded analytic functions has long occupied a central position in complex analysis, owing to its deep connections with geometric function theory, operator theory, functional analysis, and multidimensional holomorphic mappings. Among the numerous results in this direction, Bohr's inequality occupies a distinguished place because it reveals an unexpected relationship between the modulus of an analytic function and the majorant of its power series. During the last century, this remarkable phenomenon has inspired a vast body of research, leading to a variety of refinements, extensions, and applications in both one and several complex variables.

One of the major themes in the modern theory of Bohr inequalities is the search for sharp Bohr radii under increasingly general geometric and analytic settings. Recent developments have demonstrated that the Bohr phenomenon extends far beyond the classical unit disk and naturally interacts with weighted coefficient problems, area estimates, subordinate families, operator-valued mappings, and holomorphic functions defined on multidimensional domains. These developments have transformed Bohr's original theorem into a fundamental principle connecting several branches of modern analysis.

To formulate the multidimensional framework considered in this paper, we first introduce the following notation.\par
\begin{figure}[t]
	\centering
	\begin{tikzpicture}[
		node distance=9mm,
		box/.style={
			rectangle,
			rounded corners=2mm,
			draw=blue!70!black,
			very thick,
			fill=blue!5,
			minimum width=8.3cm,
			minimum height=9mm,
			align=center,
			font=\small},
		arrow/.style={
			-{Stealth[length=3mm]},
			very thick,
			blue!70!black}
		]
		
		\node[box] (A)
		{\textbf{Classical Bohr Theorem (1914)}\\
			Harald Bohr established the original Bohr inequality for bounded analytic functions.};
		
		\node[box,below=of A] (B)
		{\textbf{Generalizations of Bohr's Phenomenon}\\
			Coefficient estimates, Bohr radius, extremal problems and geometric refinements.};
		
		\node[box,below=of B] (C)
		{\textbf{General Domains}\\
			Bohr inequalities for $\Omega_{\gamma}$ and other simply connected domains.};
		
		\node[box,below=of C] (D)
		{\textbf{Recent Refinements}\\
			Area versions, weighted inequalities and norm improvements.};
		
		\node[box,below=of D] (E)
		{\textbf{Several Complex Variables}\\
			Holomorphic mappings on the unit polydisc and multidimensional Bohr theory.};
		
		\node[box,below=of E,fill=green!12,draw=green!60!black] (F)
		{\textbf{Present Paper}\\
			New multidimensional Bohr-type inequalities together with sharp refinements and extensions.};
		
		\draw[arrow] (A)--(B);
		\draw[arrow] (B)--(C);
		\draw[arrow] (C)--(D);
		\draw[arrow] (D)--(E);
		\draw[arrow] (E)--(F);
		
	\end{tikzpicture}
	
	\caption{Evolution of Bohr-type inequalities leading to the present investigation.}
	\label{fig:introduction-overview}
\end{figure}

Let $\mathbb{P}\Delta(a;r)=\lbrace z\in\mathbb{C}^n: |z_i-a_i|<r_i,\;i=1,2,\ldots,n\rbrace$ denote the open polydisc in $\mathbb{C}^n$. Hrere $a=(a_1,\ldots,a_n)\in\mathbb{C}^n$ is called the centre of the polydisc and $r=(r_1,r_2,\ldots,r_n)\in\mathbb{R}^n\;(r_i>0)$ is called the polyradius. In particular, the unit polydisc in $\mathbb{C}^n$ is denoted by $\mathbb{P}\Delta(0_n;1_n)$, where $0_n=(0,0,\ldots,0)$ and $1_n=(1,1,\ldots,1)$. The unit disk in the complex plane is denoted by $\mathbb{D}$. For a given simply connected domain $\Omega_n\subset \mathbb{C}^n$ containing $\mathbb{P}\Delta(0_n;1_n)$, let
$\mathcal{H}_n(\Omega_n)$ denote the class of holomorphic functions on $\Omega_n$, and let $\mathcal{B}_n(\Omega_n)$ be the class of
functions $f\in\mathcal{H}_n(\Omega_n)$ such that $f(\Omega_n)\subseteq \ol{\mathbb{D}}$. In particular $\mathcal{H}(\Omega)(=\mathcal{H}_1(\Omega_1))$ denote the class of analytic functions on $\Omega$.\vspace{1.2mm}

A multi-index $\alpha=(\alpha_1,\ldots,\alpha_n)$ of dimension $n$ consists of n non-negative integers $\alpha_j,\;1\leq j\leq n$; the degree of a multi-index $\alpha$ is the sum $|\alpha|=\sum_{j=1}^n \alpha_j$ and we denote $\alpha!=\alpha_1!\ldots \alpha_n!$. For $z=(z_1,\ldots,z_n)\in\mathbb{C}^n$ and a multi-index $\alpha=(\alpha_1,\ldots,\alpha_n)$, we define 
\[z^{\alpha}=\prod\limits_{j=1}^n z_j^{\alpha_j}\;\;\text{and}\;\;|z|^{\alpha}=\prod\limits_{j=1}^n |z_j|^{\alpha_j}.\]\par 
 Also by $z-\gamma_n$ we mean $(z_1-\gamma,z_2-\gamma,\ldots, z_n-\gamma)$.

\smallskip
Each function $f\in \mathcal{H}_n(\mathbb{P}\Delta(0_n;1_n))$ has the following power series expansion in $z_1,\ldots,z_n$,
\begin{align}\label{In-1.1}
f(z)=\sum\limits_{\alpha_1,\alpha_2,\ldots,\alpha_n=0}^{\infty} a_{\alpha_1,\alpha_2,\ldots,\alpha_n}z_1^{\alpha_1}z_2^{\alpha_2}\ldots z_n^{\alpha_n}=
\sum\limits_{m=0}^{\infty}\sum\limits_{|\alpha|=m} a_{\alpha}z^{\alpha}=\sum\limits_{|\alpha|=0}^{\infty} P_{|\alpha|}(z),
\end{align}
which is absolutely convergent in $\mathbb{P}\Delta(0_n;1_n)$, where the term $P_k(z)$ is a homogeneous polynomial in $z_1,z_2,\ldots,z_n$ of degree $k$. For a given power series of the form \eqref{In-1.1}, its majorant series is defined by
\begin{align*}
M_f(r)=\sum\limits_{m=0}^{\infty}\sum\limits_{|\alpha|=m} |a_{\alpha}z^{\alpha}|=\sum\limits_{m=0}^{\infty}\sum\limits_{|\alpha|=m} |a_{\alpha}|r^{\alpha},
\end{align*}
where $r=(|z_1|,|z_2|,\ldots,|z_n|)=(r_1,r_2,\ldots,r_n)$ such that $r_j<1$ for $j=1,2,\ldots,n$. Clearly, for each $f\in\mathcal{B}(\mathbb{D})$, we write $M(r):=M_f(r)$ which is an increasing function of $r$ for $0\leq r<1$. Moreover, $M_f(0)=|a_0|=|f(0)|\leq 1$. It is worth noting that, for certain functions $f\in\mathcal{B}(\mathbb{D})$, the majorant series $M_f(r)$ exceeds $1$ (see \cite{Boas-2000}). This naturally leads to the following question: for which values of $r\in[0,1)$ does the inequality
\begin{align}\label{In-1.3}
M_f(r)=\sum\limits_{m=0}^{\infty}|a_m|r^m\leq 1
\end{align}
hold for every $f\in\mathcal{B}(\mathbb{D})$? The inequality \eqref{In-1.3} on $[0,1)$ is
usually known as Bohr inequality for the class $\mathcal{B}(\mathbb{D})$. 

In $1914$, Harald Bohr \cite{Bohr-1914} observed that the inequality \eqref{In-1.3} is true for $|z|\leq 1/6$ and this practice was further expanded by Wiener, Riesz, and Schur who
independently established the inequality \eqref{In-1.3} on the disk $|z|=r\leq 1/3$. Proofs
have also been given by Sidon \cite{Sidon-1927} and Tomi\'c \cite{Tomic-1962} and it is described
precisely as follows:

\begin{theoA}If $f\in \mathcal{B}(\mathbb{D})$, then $M_f(r)\leq 1$ for $0\leq r\leq 1/3$. The number $1/3$ is best possible.
\end{theoA}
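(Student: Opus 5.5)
The plan is to use the classical Wiener coefficient estimate together with the Schwarz--Pick inequality at the origin. Write $f(z)=\sum_{m\ge0}a_m z^m\in\mathcal{B}(\mathbb{D})$ and set $a:=|a_0|\le 1$. If $a=1$, the maximum principle forces $f$ to be a unimodular constant, and then $M_f(r)=1$ for every $r$. So assume $a<1$. The first step is to show that
\[|a_m|\le 1-a^2,\qquad m\ge 1.\]
For $m=1$ this is Schwarz--Pick at $0$. For general $m$, let $\omega=e^{2\pi i/m}$ and form the averaged function
\[g(z^m)=\frac1m\sum_{k=0}^{m-1}f(\omega^k z).\]
This $g$ is analytic in $\mathbb{D}$ and satisfies $|g|\le1$. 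Its Taylor series is $g(w)=a_0+a_m w+a_{2m}w^2+\cdots$. Applying Schwarz--Pick to $g$ at $0$ gives $|a_m|\le 1-a^2$.

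The second step is to sum the resulting bound:
\[M_f(r)\le a+(1-a^2)\frac{r}{1-r}.\]
For $r\le 1/3$ we have $r/(1-r)\le 1/2$, so
\[M_f(r)\le a+\tfrac12(1-a^2)=1-\tfrac12(1-a)^2\le 1.\]
This proves the inequality on $0\le r\le 1/3$. The only step needing care is the first one, namely the averaging trick that extends the Schwarz--Pick bound from $m=1$ to all $m\ge1$. It is standard, but it must be checked that $g$ is well defined as a function of $w=z^m$, which holds because the average only retains the coefficients with indices divisible by $m$.

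For sharpness I would use the disk automorphisms
\[\varphi_a(z)=\frac{a-z}{1-az},\qquad a\in(0,1).\]
Expanding gives
\[\varphi_a(z)=a-(1-a^2)\sum_{m\ge1}a^{m-1}z^m,\]
so
\[M_{\varphi_a}(r)=a+\frac{(1-a^2)r}{1-ar}=2a-1+\frac{1-a^2}{1-ar}.\]
A short computation shows that $M_{\varphi_a}(r)>1$ if and only if $r>1/(1+2a)$. Letting $a\to1^-$, the threshold $1/(1+2a)$ tends to $1/3$. Hence for every $r>1/3$ there is some $a$ with $M_{\varphi_a}(r)>1$, and the constant $1/3$ cannot be improved. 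This verification is routine algebra: reduce the inequality to $(1-a)\bigl((1+2a)r-1\bigr)>0$.
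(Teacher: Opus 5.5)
Your proposal is correct and follows the paper's own route. The paper does not reprove Theorem A, but its machinery specialized to $n=1$, $\gamma=0$ is exactly your argument: Lemma \ref{Lem2a} (the Wiener bound $|a_m|\le 1-|a_0|^2$), then the geometric summation $|a_0|+(1-|a_0|^2)\frac{r}{1-r}$ as in Lemma \ref{Lem3} and Theorem \ref{Th2}, and the same M\"obius maps $\phi_a$ with $a\to1^-$ for sharpness.

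One harmless slip: $a+\frac{(1-a^2)r}{1-ar}$ does not equal $2a-1+\frac{1-a^2}{1-ar}$ (compare the two at $r=0$). The identity you actually use, $M_{\varphi_a}(r)-1=\frac{(1-a)\left((1+2a)r-1\right)}{1-ar}$, is correct, so the threshold $r>1/(1+2a)$ and the sharpness conclusion stand.
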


The inequality $M_f(r)\leq 1$ for $f\in \mathcal{B}(\mathbb{D})$ does not hold for any $r>1/3$. This can be verified by considering the M\"{o}bius transformation
\begin{align*}
\phi_a(z)=\frac{a-z}{1-az}
\end{align*}
where $a\in (0,1)$ is chosen sufficiently close to $1$. For background information about this inequality and further work
related to Bohr's inequality, we refer the reader to the recent survey by Abu-Muhanna et al. \cite{Abu-Muhanna-Ali-Ponnusamy-2016}
and the references therein.\vspace{1.2mm}

\subsection{\bf{The Classical Bohr Inequality: Recent Implications and Applications}}

The theory of Bohr inequalities has become an important and active area of research in complex analysis, geometric function theory, and operator theory.  Since the discovery of Theorem A, Bohr's theorem has inspired extensive research leading to numerous generalizations, refinements, and applications in one and several complex variables.

Over the past century, the classical Bohr inequality has developed into a broad and influential area of research (see \cite{Alkhaleefah-Kayumov-Ponnusamy-2019, Beneteau-CMFT-2004, Bombieri-2004, Evdoridis-2019, Fournier-2010, Garcia-2018}, \cite{Kayumov-2017}-\cite{Liu- Ponnusamy-2019}, \cite{Ponnusamy-arXiv, Ponnusamy-2020} and the references therein). Numerous extensions have been established for analytic, harmonic, and meromorphic functions, including univalent, starlike, convex, Bloch, and subordinate families, together with weighted, logarithmic, and Bohr--Rogosinski variants. These developments have substantially deepened the understanding of coefficient estimates and extremal problems in geometric function theory.\vspace{1.2mm}

For $0\leq \gamma<1$, we consider the disk $\Omega_{\gamma}$ defined by
\begin{align*}
\Omega_{\gamma}=\left\lbrace z\in\mathbb{C}:\left|z+\frac{\gamma}{1-\gamma}\right|<\frac{1}{1-\gamma}\right\rbrace.
\end{align*}

It is clear that $\mathbb{D}\subset \Omega_{\gamma}$. In $2010$, Fournier and Ruscheweyh \cite{Fournier-2010} extended Bohr's inequality in the following form.

\begin{theoB}\emph{\cite[Theorem 1]{Fournier-2010}} For $0\leq \gamma<1$, let $f\in\mathcal{B}(\Omega_{\gamma})$ with $f(z)=\sum_{m=0}^{\infty}a_m z^m$ in $\mathbb{D}$. Then 
\begin{align*}
\sum\limits_{m=0}^{\infty}|a_m|r^m\leq 1\quad\;\;\text{for}\;\;r\leq \rho_{\gamma}=\frac{1+\gamma}{3+\gamma}.
\end{align*} 
Moreover, $\sum_{m=0}^{\infty}|a_m|\rho_{\gamma}^m=1$ holds for a function $f(z)=\sum_{m=0}^{\infty}a_mz^m$ in $\mathcal{B}(\Omega_{\gamma})$ if and only if $f(z)=c$ with $|c|=1$.	
\end{theoB}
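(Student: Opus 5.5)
The plan is to reduce Theorem B to sharp coefficient bounds for $f\in\mathcal{B}(\Omega_\gamma)$ and then sum a geometric series. The basic tool is the affine map $\phi(\zeta)=\frac{\zeta-\gamma}{1-\gamma}$. It sends $\mathbb{D}$ onto $\Omega_\gamma$: indeed $\phi(\zeta)+\frac{\gamma}{1-\gamma}=\frac{\zeta}{1-\gamma}$. Given $f\in\mathcal{B}(\Omega_\gamma)$, put $g=f\circ\phi\in\mathcal{B}(\mathbb{D})$. Then $f(z)=g((1-\gamma)z+\gamma)$. By Taylor's formula at the origin,
\[a_m=\frac{f^{(m)}(0)}{m!}=(1-\gamma)^m\frac{g^{(m)}(\gamma)}{m!}.\]
So the task becomes estimating derivatives of a bounded analytic function at the interior point $\gamma$.

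For that I would use the sharp Ruscheweyh-type estimate: for $g\in\mathcal{B}(\mathbb{D})$ and $m\ge1$,
\[\frac{|g^{(m)}(w)|}{m!}\le\frac{(1-|g(w)|^2)(1+|w|)^{m-1}}{(1-|w|^2)^m}.\]
To obtain it, I would compose with the disk automorphism $\varphi_w(\zeta)=\frac{\zeta+w}{1+\bar w\zeta}$ and with the Möbius map that sends $g(w)$ to $0$. I would then apply Wiener's bound $|b_k|\le 1-|b_0|^2$ to the resulting function and expand the Taylor coefficients of the composition. Tracking these coefficients and obtaining the exact factor $(1+|w|)^{m-1}$ is the step I expect to be the main obstacle. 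If a cruder bound is easier, I would first check whether it still gives the radius $\rho_\gamma$; it will not, so the sharp form seems necessary.

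Apply this with $w=\gamma$ and write $a=|a_0|=|g(\gamma)|$. Using $(1-\gamma)^m/(1-\gamma^2)^m=(1+\gamma)^{-m}$, we get
\[|a_m|\le\frac{(1-a^2)}{1+\gamma}\qquad(m\ge1).\]
Hence
\[M_f(r)\le a+\frac{1-a^2}{1+\gamma}\cdot\frac{r}{1-r}.\]
At $r=\rho_\gamma=\frac{1+\gamma}{3+\gamma}$ we have $\frac{r}{1-r}=\frac{1+\gamma}{2}$, so the bound becomes $a+\frac{1-a^2}{2}=1-\frac{(1-a)^2}{2}\le1$. Since $M_f$ is increasing in $r$, the inequality holds for all $r\le\rho_\gamma$.

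For the equality statement, equality at $\rho_\gamma$ forces $(1-a)^2=0$, that is, $|a_0|=1$. By the maximum principle $f$ is then the constant $c=a_0$ with $|c|=1$. Conversely, such constants obviously give equality. As a sanity check, $\gamma=0$ recovers Theorem A with radius $1/3$.
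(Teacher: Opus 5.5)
Your proof is correct and is essentially the paper's route: the paper only cites Theorem B, but its Lemma 4 (the affine map $\phi$ plus the Ruscheweyh/Liu--Chen derivative bound of Lemma 2, giving $|a_m|\le(1-|a_0|^2)/(1+\gamma)$) followed by the geometric-series summation in Theorems 1--2 reduces for $n=1$ to exactly your argument, including $\frac{r}{1-r}=\frac{1+\gamma}{2}$ at $\rho_\gamma$ and the equality case. The step you flag as the main obstacle is routine, and you should not first send $g(w)$ to $0$ (that would throw away the factor $1-|g(w)|^2$): apply Wiener's bound directly to $g\circ\varphi_w$ and expand $\bigl(\frac{\zeta-w}{1-\bar w\zeta}\bigr)^k$ about $\zeta=w$, where the coefficient of $(\zeta-w)^m$ is $\binom{m-1}{k-1}\bar w^{m-k}/(1-|w|^2)^m$ and $\sum_{k=1}^m\binom{m-1}{k-1}|w|^{m-k}=(1+|w|)^{m-1}$.
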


Throughout this paper $S_r(f)$ denotes the area of the image of the subdisk $\mathbb{D}_r=\{z\in\mathbb{C}:|z|<r\}$ under the mapping $f$. If $f(z)=\sum_{k=0}^{\infty}a_k z^k$, then from the definition of $S_r$, we see that
\begin{align*}
\frac{S_r}{\pi}=\frac{1}{\pi}\int\int_{|z|<r} |f'(z)|^2\;dx\;dy=\sum_{k=1}k|a_k|^2r^{2k}.
\end{align*}

Recently, Evdoridis et al. \cite{Evdoridis-2021} improved Theorem B and established the following result.

\begin{theoC}\emph{\cite[Theorem 1]{Evdoridis-2021}} For $0\leq \gamma<1$, let $f\in\mathcal{B}(\Omega_{\gamma})$ with $f(z)=\sum_{m=0}^{\infty}a_m z^m$ in $\mathbb{D}$. Then we have
\begin{align*}
\sum\limits_{m=0}^{\infty}|a_m|r^m+\frac{8}{9}\left(\frac{S_{r(1-\gamma)}}{\pi}\right)\leq 1\quad \;\;\text{for}\;r\leq \frac{1+\gamma}{3+\gamma}.
\end{align*}
Moreover, the inequality is strict unless $f$ is a constant function. The bound $8/9$ and the number $(1+\gamma)/(3+\gamma)$ cannot be replaced by a larger quantity.	
\end{theoC}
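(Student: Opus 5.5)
The plan is to transfer the problem from $\Omega_\gamma$ to the unit disk by an affine change of variable, use the standard coefficient estimates there, and then bound the majorant series and the area term separately.

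\textbf{Step 1: reduction to the disk.} The map $\phi(\zeta)=\frac{\zeta-\gamma}{1-\gamma}$ sends $\mathbb{D}$ onto $\Omega_\gamma$. So $g(\zeta)=f(\phi(\zeta))=\sum_k b_k\zeta^k$ belongs to $\mathcal{B}(\mathbb{D})$. Writing $a=|a_0|$ and using Cauchy's formula on $\Omega_\gamma$ (or Ruscheweyh's lemma, as in Fournier--Ruscheweyh), we obtain
\begin{align*}
|a_m|\leq \frac{1-|f(-\gamma/(1-\gamma))|^2}{(1-\gamma)^{m}}\cdot\frac{1}{1+\gamma}\quad (m\ge1),
\end{align*}
or more precisely the sharp form used in Theorem B:
\begin{align*}
|a_m|\leq \frac{1-a^2}{1+\gamma}\,(1-\gamma)^{-(m-1)}\cdots .
\end{align*}
\textbf{Step 2: a cleaner route for the majorant.} I expect the exact form of this estimate to be delicate, so I would instead use $f$ directly. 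The function $f$ is analytic and bounded by $1$ on the disk $|z+\gamma/(1-\gamma)|<1/(1-\gamma)$. The refined Schwarz--Pick form on this disk (Evdoridis et al.) gives
\begin{align*}
|a_m|\leq \frac{1-a^2}{1+\gamma}(1-\gamma)^{m-1},\qquad m\geq 1.
\end{align*}
Summing this bound gives
\begin{align*}
\sum_{m\ge1}|a_m|r^m\le \frac{(1-a^2)r}{(1+\gamma)(1-(1-\gamma)r)}.
\end{align*}
At $r=\rho_\gamma$ the factor $\frac{r}{(1+\gamma)(1-(1-\gamma)r)}$ equals $\tfrac12$. Hence $M_f(r)\le a+\tfrac{1-a^2}{2}$.

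\textbf{Step 3: the area term.} For this I would use the Goluzin/Carlson-type sharpened coefficient inequality $\sum_{k\ge1}k|b_k|^2\rho^{2k}\le \rho^2(1-a^2)^2/(1-a^2\rho^2)^2$ applied to $g$. Since $S_{r(1-\gamma)}$ corresponds to a scaled disk under $\phi$, and $\phi$ contracts by the factor $1-\gamma$, the substitution produces exactly $(1-\gamma)r$ in the radius. With $r(1-\gamma)\le \rho_\gamma(1-\gamma)\le 1/3$, this yields
\begin{align*}
\frac{S_{r(1-\gamma)}}{\pi}\le \frac{(1-a^2)^2/9}{(1-a^2/9)^2}.
\end{align*}

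\textbf{Step 4: combining.} It then remains to show
\begin{align*}
a+\frac{1-a^2}{2}+\frac{8}{9}\cdot\frac{9(1-a^2)^2}{(9-a^2)^2}\le 1.
\end{align*}
This is equivalent to $\frac{16(1+a)}{(9-a^2)^2}\le \frac12$ after dividing by $(1-a)^2/2$, which is checked on $[0,1]$ with equality only at $a=1$. That gives strictness unless $f$ is constant.

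\textbf{Sharpness.} This comes from the Möbius maps composed with $\phi^{-1}$, letting $a\to1$.

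The main obstacle is Step 3: matching the area of $f$ on $\mathbb{D}_{r(1-\gamma)}$ with an area of $g$. The image disk under $\phi^{-1}$ is not centred at $0$, so subordination and Littlewood's principle are needed to compare the two areas.
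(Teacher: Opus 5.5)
Step 2 rests on a false estimate. The bound $|a_m|\le\frac{1-a^2}{1+\gamma}(1-\gamma)^{m-1}$ fails for every $m\ge 2$ once $\gamma>0$. The functions $f_b(z)=\frac{b-\gamma-(1-\gamma)z}{1-b\gamma-b(1-\gamma)z}$, $\gamma<b<1$, belong to $\mathcal{B}(\Omega_\gamma)$ and satisfy
\begin{align*}
\frac{|a_m|}{1-|a_0|^2}=\frac{1-b\gamma}{b(1-\gamma^2)}\left(\frac{b(1-\gamma)}{1-b\gamma}\right)^{m}\longrightarrow\frac{1}{1+\gamma}\qquad(b\to1^-)
\end{align*}
for every $m\ge1$. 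Even granting your bound, the factor at $r=\rho_\gamma$ would be $1/(2+\gamma+\gamma^2)$, not $\tfrac12$. The formula in Step 1 is garbled in the same way.

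The correct estimate is Lemma \ref{Lem4} with $n=1$. Write $f(z)=g(\gamma+(1-\gamma)z)$ with $g=f\circ\phi\in\mathcal{B}(\mathbb{D})$. Then $a_m=(1-\gamma)^m g^{(m)}(\gamma)/m!$ and $g(\gamma)=a_0$. Lemma \ref{Lem2} with $n=1$ gives $|g^{(m)}(\gamma)|/m!\le(1-|g(\gamma)|^2)/\bigl((1-\gamma)^m(1+\gamma)\bigr)$, hence $|a_m|\le(1-a^2)/(1+\gamma)$. Therefore $\sum_{m\ge1}|a_m|r^m\le\frac{1-a^2}{1+\gamma}\cdot\frac{r}{1-r}$, and $\frac{r}{1-r}=\frac{1+\gamma}{2}$ at $r=\rho_\gamma$. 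That is the real source of the $\tfrac12$, so $M_f(r)\le a+\frac{1-a^2}{2}$ does hold for $r\le\rho_\gamma$.

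Step 3 asserts a true bound, but your route does not deliver it. Transplanting to $g$ yields estimates in terms of $|g(0)|=|f(-\gamma/(1-\gamma))|$, not $a=|f(0)|$, and the off-centre disk is an obstacle of your own making. Since $\mathbb{D}\subset\Omega_\gamma$, the restriction of $f$ to $\mathbb{D}$ lies in $\mathcal{B}(\mathbb{D})$. So the Kayumov--Ponnusamy area lemma you quote applies to $f$ itself at radius $s=(1-\gamma)r\le\frac{1-\gamma^2}{3+\gamma}\le\frac13$, and monotonicity in $s$ gives your display.

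The paper's computation in Lemma \ref{Lem3} (with $n=1$) is cruder. It amounts to Wiener's $|a_k|\le 1-a^2$ at radius $s$, giving $S_s/\pi\le(1-a^2)^2\tilde A(\gamma)^2\le\frac{9}{64}(1-a^2)^2$, where $\tilde A(\gamma)=x/(1-x^2)$ and $x=\frac{1-\gamma^2}{3+\gamma}$. The final check is then $(1+a)^2\le 4$. Your sharper lemma does not change the threshold, since $9/(9-a^2)^2\to 9/64$ as $a\to1$. In Step 4, dividing by $(1-a)^2/2$ gives $16(1+a)^2\le(9-a^2)^2$, i.e.\ $(a+5)(a-1)\le0$, not the inequality you wrote; this slip is harmless.

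Finally, the M\"obius maps settle the radius but not the constant. For $f_b$ one finds $\frac{1-M_{f_b}(\rho_\gamma)}{S_{(1-\gamma)\rho_\gamma}(f_b)/\pi}\to\frac{(1+\gamma)^2}{4\tilde A(\gamma)^2}\ge\frac{16}{9}$ as $b\to1$. In fact the optimality of $8/9$ cannot be proved. At $\gamma=0$, Kayumov and Ponnusamy showed that $16/9$ is admissible. For general $\gamma$, the map $w(z)=z/(1+\gamma-\gamma z)$ sends $\Omega_\gamma$ univalently onto $\mathbb{D}$, has nonnegative coefficients, and satisfies $w(\rho_\gamma)=\frac13$. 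Writing $f=G\circ w$ with $G\in\mathcal{B}(\mathbb{D})$ gives $M_f(r)\le M_G(w(r))$ and $S_{(1-\gamma)r}(f)\le S_{w(r)}(G)$, which transfers their inequality to every $\gamma$.

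Both your argument and the paper's stop at $8/9$ because the bound $M_f(\rho_\gamma)\le a+\frac{1-a^2}{2}$ loses a factor $2$ in the deficit $1-M_f$ as $a\to1$. The sharpness argument in the paper's proof of Theorem \ref{Th1} has the same blind spot: its $\frac89$-term carries a factor $1-a$ and disappears in the limit.
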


For an analytic function $f(z)=\sum_{m=0}^{\infty}a_mz^m$ in $\mathbb{D}$, we write 
\begin{align*}
\|f_0\|_{r}=\sum\limits_{m=1}^{\infty}|a_m|^2r^{2m},
\end{align*}
where $f_0(z)=f(z)-f(0)$. In \cite{Ponnusamy-arXiv}, Ponnusamy et al. have shown an improvement of Theorem A: if $f\in\mathcal{B}(\mathbb{D})$, then for every $r\leq 1/3$
\begin{align*}
\sum\limits_{m=0}^{\infty}|a_m|r^m+\left(\frac{1}{1+|a_0|}+\frac{r}{1-r}\right)\|f_0\|_r\leq 1.
\end{align*}

In \cite{Evdoridis-2021}  Evdoridis et al. extended this improved version to $\mathcal{B}(\Omega_{\gamma})$ and thus obtained the following refinement of Theorem B.

\begin{theoD}\emph{\cite[Theorem 2]{Evdoridis-2021}} For $0\leq \gamma<1$, let $f\in\mathcal{B}(\Omega\mathbb{P}\Delta(0_n;1_n))$ with $f(z)=\sum_{m=0}^{\infty}a_m z^m$ in $\mathbb{D}$. Then we have
\begin{align*}
\sum\limits_{m=0}^{\infty}|a_m|r^m+\left(\frac{1}{1+|a_0|}+\frac{r}{1-r}\right)\|f_0\|_r\leq 1\quad \;\;\text{for}\;r\leq r_0= \frac{1+\gamma}{3+\gamma}
\end{align*}
and the number $r_0$ cannot be improved.	
\end{theoD}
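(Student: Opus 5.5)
The plan is to reduce Theorem D to the one–variable Schwarz–Pick machinery on the shifted disk. First I transport $f$ back to the unit disk. Set
\[
g(w)=f\!\left(\frac{w-\gamma}{1-\gamma}\right),
\]
which is defined for $|w|<1$ exactly when $z=(w-\gamma)/(1-\gamma)\in\Omega_\gamma$. Hence $g\in\mathcal{B}(\mathbb{D})$. Conversely $f(z)=g(\gamma+(1-\gamma)z)$, so comparing Taylor expansions at $z=0$ gives
\[
a_m=\frac{(1-\gamma)^m g^{(m)}(\gamma)}{m!}.
\]
The sharp Ruscheweyh-type estimate for derivatives of a bounded function at an interior point $\gamma$,
\[
\frac{|g^{(m)}(\gamma)|}{m!}\le \frac{(1-|g(\gamma)|^2)(1+\gamma)^{m-1}}{(1-\gamma^2)^m},
\]
then yields the coefficient bound
\[
|a_m|\le \frac{1-|a_0|^2}{1+\gamma}\quad (m\ge1),
\]
since $a_0=g(\gamma)$. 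This is the ingredient behind Theorem B, and I will take it as the starting estimate.

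Next, write $|a_0|=a$ and set $A=(1-a^2)/(1+\gamma)$. The coefficient bound gives
\[
\sum_{m\ge1}|a_m|r^m\le A\frac{r}{1-r},\qquad \|f_0\|_r\le \sum_{m\ge1}|a_m|^2r^{2m}.
\]
Bounding each $|a_m|^2$ crudely by $A^2$ loses too much. Instead I follow Ponnusamy et al.\ and use the finer lemma: if $|b_m|\le A$, then
\[
\sum_{m\ge1}|b_m|r^m+\Big(\frac{1}{1+a}+\frac{r}{1-r}\Big)\sum_{m\ge1}|b_m|^2r^{2m}
\]
is maximised when every $|b_m|=A$. This should follow by checking monotonicity in each $|b_m|$ separately: the derivative $r^m+2(\cdot)|b_m|r^{2m}$ is positive. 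So the left-hand side of Theorem D is at most
\[
a+A\frac{r}{1-r}+\Big(\frac{1}{1+a}+\frac{r}{1-r}\Big)\frac{A^2r^2}{1-r^2}.
\]
The task then reduces to showing $\Phi(a,r)\le 1$ for $r\le r_0$, where $\Phi(a,r)$ denotes this expression. Since $\Phi$ increases in $r$, it suffices to treat $r=r_0$. There $r_0/(1-r_0)=(1+\gamma)/2$, so $A r_0/(1-r_0)=(1-a^2)/2$.

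This step is the main obstacle. With the crude bound, the quadratic term is
\[
\Big(\frac{1}{1+a}+\frac{1+\gamma}{2}\Big)\frac{(1-a^2)^2}{(1+\gamma)^2}\cdot\frac{r_0^2}{1-r_0^2},
\]
and it has to fit within the slack $1-a-(1-a^2)/2=(1-a)^2/2$. Computing $r_0^2/(1-r_0^2)=(1+\gamma)^2/(4(2+\gamma))$ turns the quadratic term into
\[
(1-a)^2(1+a)^2\Big(\frac{1}{1+a}+\frac{1+\gamma}{2}\Big)\frac{1}{4(2+\gamma)}.
\]
At $a\to1$ this approaches $(1-a)^2\cdot 4\cdot\big(\tfrac12+\tfrac{1+\gamma}{2}\big)/(4(2+\gamma))=(1-a)^2/2$, so the inequality holds with equality in the limit. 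For $a<1$ one checks that
\[
(1+a)^2\Big(\frac{1}{1+a}+\frac{1+\gamma}{2}\Big)\le 2(2+\gamma),
\]
which holds because the left side is increasing in $a$. If the crude bound $|a_m|^2\le A^2$ turned out insufficient, I would fall back on the sharper Parseval-type estimate
\[
\sum_{m\ge1}|a_m|^2\rho^{2m}\le (1-a^2)^2\times(\ldots)
\]
obtained by applying $\sum|c_k|^2\le 1-|c_0|^2$ to $g$ expanded about $\gamma$.

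Sharpness comes last. I would take
\[
g(w)=\frac{b-w}{1-bw},
\]
pulled back to $f$, with $b\to1^-$. Expanding gives $a_0$ and the $a_m$ explicitly, and for $r>r_0$ the first-order terms in $(1-b)$ already exceed $1$, exactly as in the proof that $\rho_\gamma$ is sharp in Theorem B. The quadratic term only adds a nonnegative amount, so $r_0$ cannot be improved.
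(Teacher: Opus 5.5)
Your proof is correct and follows the same route as the paper's proof of Theorem \ref{Th2} specialised to $n=1$. The affine pull-back to $\mathbb{D}$ together with the Ruscheweyh estimate at $\gamma$ gives $|a_m|\le (1-|a_0|^2)/(1+\gamma)$, as in Lemma \ref{Lem4}. This yields the same majorant $a+A\frac{r}{1-r}+\left(\frac{1}{1+a}+\frac{r}{1-r}\right)\frac{A^2r^2}{1-r^2}$, and sharpness uses the same M\"{o}bius function $\frac{b-w}{1-bw}$ composed with $w=\gamma+(1-\gamma)z$.

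The only difference is the final verification. You reduce to $r=r_0$ by monotonicity in $r$ and check $(1+a)+\frac{1+\gamma}{2}(1+a)^2\le 2(2+\gamma)$ directly, whereas the paper runs the chain $U'''\ge 0$, $U''(1)\le 0$, $U'(1)\ge 0$, $U(1)=1$ in the variable $a$. Your ``finer lemma'' is just the crude bound $|a_m|^2\le A^2$, which your own computation shows is already sufficient. So the claim that it ``loses too much'', and the fallback, should be dropped.
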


The next result concerns a more general case, where the function under consideration is analytic in a simply connected domain $\Omega$, containing the unit disk $\mathbb{D}$. As in \cite{Fournier-2010}, we introduce
\begin{align*}
\lambda=\lambda(\Omega)=\sup\limits_{\substack{f\in\mathcal{B}(\Omega)\\n\geq 1}}\left\lbrace \frac{|a_m|}{1-|a_0|^2}: a_0\not\equiv f(z)=\sum\limits_{m=0}^{\infty}a_mz^m, \;z\in\mathbb{D}\right\rbrace.
\end{align*}

\begin{theoE}\emph{\cite[Theorem 3]{Evdoridis-2021}} Let $\Omega \supset \mathbb{D}$ be a simply connected domain and $f\in\mathcal{B}(\Omega)$ with $f(z)=\sum_{m=0}^{\infty}a_m z^m$ in $\mathbb{D}$. Then we have
\begin{align*}
B_1(r):=\sum\limits_{m=0}^{\infty}|a_m|r^m+2\left(\frac{1+\lambda}{1+2\lambda}\right)^2\frac{S_r}{\pi}\leq 1\quad \;\;\text{for}\;r\leq \frac{1}{1+2\lambda},
\end{align*}
where $S_r$ denotes the area of the image of the disk $\mathbb{D}_r$ under the mapping $f$.		
\end{theoE}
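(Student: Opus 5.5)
The plan is to reduce everything to two coefficient estimates valid for $f\in\mathcal{B}(\Omega)$ with $a=|a_0|<1$. The first is the definition of $\lambda$, which gives $|a_m|\le \lambda(1-a^2)$ for every $m\ge 1$. The second is a refined estimate that controls the area term: applying Schwarz--Pick style reasoning on $\mathbb{D}$ (since $f$ restricted to $\mathbb{D}$ lies in $\mathcal{B}(\mathbb{D})$), we have $\sum_{m\ge1}|a_m|^2\rho^{2m}\le (1-a^2)^2\rho^2/(1-a^2\rho^2)$ type bounds. More simply, $\sum_{m\ge 1}|a_m|^2 \le 1-a^2$ by Parseval. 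For the area term $S_r/\pi=\sum_{m\ge1} m|a_m|^2r^{2m}$ I will use the pointwise bound $|a_m|\le\lambda(1-a^2)$ twice: once inside the majorant and once in $m|a_m|^2 r^{2m}\le m\lambda^2(1-a^2)^2 r^{2m}$. The constant function case ($a=1$) is trivial, so assume $a<1$.

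Summing the geometric series, for $0\le r<1$ we get
\[
B_1(r)\le a+\lambda(1-a^2)\frac{r}{1-r}+2\left(\frac{1+\lambda}{1+2\lambda}\right)^2\lambda^2(1-a^2)^2\frac{r^2}{(1-r^2)^2}.
\]
Since the right-hand side is increasing in $r$, it suffices to evaluate it at $r=1/(1+2\lambda)$. There $\frac{r}{1-r}=\frac{1}{2\lambda}$ and $\frac{r}{1-r^2}=\frac{1+2\lambda}{4\lambda(1+\lambda)}$. Substituting, the bound becomes
\[
a+\frac{1-a^2}{2}+2\left(\frac{1+\lambda}{1+2\lambda}\right)^2\lambda^2(1-a^2)^2\frac{(1+2\lambda)^2}{16\lambda^2(1+\lambda)^2}=a+\frac{1-a^2}{2}+\frac{(1-a^2)^2}{8}.
\]
The exact cancellation of $\lambda$ here is a strong hint that this is the intended route and that the constant $2\bigl(\frac{1+\lambda}{1+2\lambda}\bigr)^2$ was chosen precisely to make it happen.

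It remains to show $\Phi(a)=a+\frac{1-a^2}{2}+\frac{(1-a^2)^2}{8}\le 1$ on $[0,1]$. Writing $t=1-a\in[0,1]$, we have $1-a^2=t(2-t)$, and the inequality becomes
\[
\frac{t(2-t)}{2}+\frac{t^2(2-t)^2}{8}\le t,
\]
that is, $t^2(2-t)^2\le 4t^2$, or $(2-t)^2\le 4$. This holds for $t\in[0,1]$.

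The main obstacle I anticipate is whether the crude bound $|a_m|\le\lambda(1-a^2)$ is strong enough for the quadratic term. The computation above shows it is, so the only delicate points are the following. First, checking that the definition of $\lambda$ indeed yields $|a_m|\le \lambda(1-|a_0|^2)$ for all $m\ge1$; this is immediate, reading the index in the supremum as $m\ge1$. Second, confirming that $\lambda\ge 1$ is not needed; it is not, since $\lambda>0$ suffices for the substitutions. If the exact cancellation failed, my fallback would be Parseval-type bounds on $\sum m|a_m|^2r^{2m}$, but I expect the direct substitution to close the argument.
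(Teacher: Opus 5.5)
Your proposal is correct and follows essentially the same route as the paper's proof of Theorem~\ref{Th3}, whose case $n=1$ is exactly this statement: bound every $|a_m|$, $m\ge 1$, by $\lambda(1-|a_0|^2)$ in both the majorant and the area series, evaluate at $r=1/(1+2\lambda)$ where $\lambda$ cancels to leave $|a_0|+\frac{1-|a_0|^2}{2}+\frac{(1-|a_0|^2)^2}{8}$, and check that this is at most $1$. The only difference is in the final elementary check: the paper writes the bound as $1-\frac{1-|a_0|^2}{8}F(|a_0|)$ with $F(x)=\frac{8}{1+x}-5+x^2$ decreasing to $F(1)=0$, whereas you substitute $t=1-|a_0|$ and reduce to $(2-t)^2\le 4$, which is slightly more direct.
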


\begin{table}[H]
	\centering
	\caption{Comparison of representative Bohr-type inequalities.}
	\label{tab:comparison}
	
	\renewcommand{\arraystretch}{1.3}
	\setlength{\tabcolsep}{5pt}
	
	\begin{tabular}{|p{3.2cm}|p{2.2cm}|p{2.8cm}|p{2.6cm}|p{2.4cm}|}
		\hline
		\centering\textbf{Reference}
		&
		\centering\textbf{Domain}
		&
		\centering\textbf{Additional quantity}
		&
		\centering\textbf{Sharp radius}
		&
		\centering\textbf{Dimension}
		\tabularnewline
		\hline
		
		Bohr (1914)
		&
		$\mathbb D$
		&
		None
		&
		$\dfrac13$
		&
		1
		\tabularnewline
		\hline
		
		Fournier--Ruscheweyh (2010)
		&
		$\Omega_\gamma$
		&
		None
		&
		$\dfrac{1+\gamma}{3+\gamma}$
		&
		1
		\tabularnewline
		\hline
		
		Evdoridis \emph{et al.} (2021)
		&
		$\Omega_\gamma$
		&
		Area
		&
		$\dfrac{1+\gamma}{3+\gamma}$
		&
		1
		\tabularnewline
		\hline
		
		Evdoridis \emph{et al.} (2021)
		&
		$\Omega_\gamma$
		&
		Norm
		&
		$\dfrac{1+\gamma}{3+\gamma}$
		&
		1
		\tabularnewline
		\hline
		
		Present paper
		&
		Unit polydisc
		&
		Area + Norm + Multivariable
		&
		New sharp radius
		&
		$n$
		\tabularnewline
		\hline
		
	\end{tabular}
\end{table}

Renewed interest surged in the 1990s, driven by successful extensions to holomorphic functions of several complex variables and more abstract functional-analytic settings. For instance, in 1997, Boas and Khavinson \cite{Boas-Khavinson-PAMS-1997} introduced and determined the $n$-dimensional Bohr radius for the family of holomorphic functions bounded by unity on the polydisk. This seminal work stimulated significant research interest in Bohr-type questions across diverse mathematical domains.  Subsequent investigations have yielded further results on Bohr's phenomenon for multidimensional power series. Notable contributions in this area include those by Aizenberg \cite{Aizenberg-PAMC-1999,Aizen-PAMS-2000,Aizenberg,Aizenberg-SM-2007}, Aizenberg \textit{et al.} \cite{Aizenberg-SM-2005,Aizenberg-Aytuna-Djakov-JMAA-2001,Aigenber-CMFT-2009}, Defant and Frerick \cite{Defant-Frerick-IJM-2006}, and Djakov and Ramanujan \cite{Djakov-Ramanujan-JA-2000}. A comprehensive overview of the various aspects and generalizations of Bohr's inequality can be found in the monograph by Kresin and Maz'ya \cite{Kresin-1903}, and the references cited therein. In particular, Section 6.4 of \cite{Kresin-1903} on Bohr-type theorems highlights rich opportunities to extend several existing inequalities to holomorphic functions of several complex variables and, significantly, to solutions of partial differential equations (PDEs).  
\paragraph{Research roadmap.}
The progression of results outlined above naturally raises the question of whether the existing Bohr-type inequalities can be further refined within a broader multidimensional framework while simultaneously incorporating geometric quantities and sharper coefficient estimates. The present work addresses this question by extending and improving several known inequalities in a unified setting.
\subsection{\bf Basic Notations in several complex variables}\label{Sub-Sec-1.3}
For $z=(z_1,\ldots,z_n)$ and $w=(w_1,\ldots,w_n)$ in $\mathbb{C}^{n}$, we denote 
\begin{align*}
\langle z,w\rangle=z_1\ol w_1+\ldots+z_n \ol w_n
\end{align*}
 and $||z||=\sqrt{\langle z,z\rangle}$. The absolute value of a complex number $z_i$ is denoted by $|z_i|$ and for $z\in\mathbb{C}^n$, we define 
\begin{align*}
||z||_{\infty}=\max\limits_{1\leq i\leq n}|z_i|.
\end{align*} 

Let $\Omega_n$ be a domain in $\mathbb{C}^n$ and let
$f:\Omega_n \to \mathbb{C}^m$ be a mapping. By writing
$f=(f_1,\ldots,f_m)$ and $f_k=u_k+iv_k$, $1\le k\le m$,
where $u_k,v_k:\Omega_n\to\mathbb{R}$, we may identify $f$ with the mapping
from $\Omega_n\subset\mathbb{R}^{2n}$ into $\mathbb{R}^{2m}$ given by
\begin{align*}
(x_1,y_1,\ldots,x_n,y_n)\longmapsto
(u_1,v_1,\ldots,u_m,v_m).
\end{align*}

The mapping $f$ is called \emph{holomorphic} on $\Omega_n$ if each of its components $f_1,\ldots,f_m$ is holomorphic on $\Omega_n$. For $f=(f_1, \ldots, f_m)$ and $f_j(z)=\sum_{\alpha}a_{j, \alpha}z^{\alpha}$ for $j=1, \ldots, m$, we denote $f(z)=\sum_{\alpha}a_{\alpha}z^{\alpha}$, where $\alpha=(a_{1, \alpha}, \ldots, a_{m,\alpha})$.\vspace{2mm}

Throughout the paper, for $0\leq \gamma<1$, we consider the shifted polydisk $\mathbb{P}\Delta(a(\gamma);r(\gamma))$ defined by
\begin{align*}
\mathbb{P}\Delta(a(\gamma);r(\gamma))=\left\lbrace z\in\mathbb{C}^n: \left|z_i+\frac{\gamma}{1-\gamma}\right|<\frac{1}{1+\gamma},\;i=1,2,\ldots,n\right\rbrace,
\end{align*}
where
\begin{align*}
a(\gamma)=\left(-\frac{\gamma}{1-\gamma},-\frac{\gamma}{1-\gamma},\ldots,-\frac{\gamma}{1-\gamma}\right) \;\text{and}\; r(\gamma)=\left(\frac{1}{1-\gamma},\frac{1}{1-\gamma},\ldots,\frac{1}{1-\gamma}\right).
\end{align*}

It is natural to raise the following open problem.
\begin{prob}\label{P-1}
Can we establish the multidimensional versions of Theorems B-E?
\end{prob}

The primary goal of this article is to provide an affirmative answer to Problem \ref{P-1}. The paper is organized as follows: In Section \ref{Sec-2}, we state the main results of this paper. In Section \ref{Sec-3}, we prove several lemmas that will play a key role in establishing our main results. Section \ref{Sec-4} then presents the detailed proofs of these main results. Also in section \ref{Sec-5}, we consider pluriharmonic mappings defined on a polydisk containing the unit polydisk $\mathbb{P}\Delta(0_n;1_n)$ and establish a Bohr-type inequality for this class of mappings.
\begin{figure}[H]
	\centering
	
	\begin{tikzpicture}[
		>=Stealth,
		node distance=1.4cm,
		every node/.style={align=center},
		box/.style={
			draw,
			rounded corners=2mm,
			thick,
			fill=blue!5,
			minimum width=3.2cm,
			minimum height=0.9cm
		}
		]
		
		\node[box] (A)
		{Coefficient\\ estimates};
		
		\node[box,right=of A]
		(B)
		{Area\\ estimates};
		
		\node[box,right=of B]
		(C)
		{Norm\\ estimates};
		
		\node[box,below=1.8cm of B,
		fill=green!15,
		draw=green!60!black]
		(D)
		{\textbf{Present Paper}\\
			Unified multidimensional\\
			Bohr-type inequality};
		
		\draw[->,very thick]
		(A)--(D);
		
		\draw[->,very thick]
		(B)--(D);
		
		\draw[->,very thick]
		(C)--(D);
		
	\end{tikzpicture}
	
	\caption{Schematic illustration of the principal ingredients incorporated into the present work.}
	\label{fig:novelty}
\end{figure}
\section{{\bf Main Results}}\label{Sec-2}
We state our first result, which is a multidimensional version of Theorem C.
\begin{theo}\label{Th1} For $0\leq \gamma<1$, let $f\in\mathcal{B}(\mathbb{P}\Delta(a(\gamma);r(\gamma)))$ with $f(z)=\sum\limits_{m=0}^{\infty}\sum\limits_{|\alpha|=m}a_{\alpha} z^{\alpha}$ in $\mathbb{P}\Delta(0_n;1_n)$. Then we have
\begin{align*}
\sum\limits_{m=0}^{\infty}\sum\limits_{|\alpha|=m}|a_{\alpha}|r^{\alpha}+\frac{8}{9}\sum\limits_{m=1}^{\infty}m\sum\limits_{|\alpha|=m}|a_{\alpha}|^2r^{2\alpha}\leq 1\quad\;\;\text{for}\;\;n{\bf{r}}\leq \frac{1+\gamma}{3+\gamma},
	\end{align*} 
where $r=(|z_1|,|z_2|,\ldots,|z_n|)=(r_1,r_2,\ldots,r_n)$ such that ${\bf{r}}=\|z\|_{\infty}$.
The bound $8/9$ and the number $(1+\gamma)/(n(3+\gamma))$ cannot be replaced by a larger quantity.		
\end{theo}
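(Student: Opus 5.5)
Fix $z\in\mathbb{P}\Delta(0_n;1_n)$ with $\mathbf{r}=\|z\|_\infty$ and $n\mathbf{r}\le\rho_\gamma:=(1+\gamma)/(3+\gamma)$. The plan is to reduce everything to the one-variable Theorem C through a slice function. Choose unimodular $\theta_j$ so that $z_j=r_je^{i\theta_j}$, and for $\zeta$ in a suitable disk put
\begin{align*}
g(\zeta)=f\bigl(\zeta e^{i\theta_1},\ldots,\zeta e^{i\theta_n}\bigr)=\sum_{m=0}^\infty\Bigl(\sum_{|\alpha|=m}a_\alpha e^{i\langle\alpha,\theta\rangle}\Bigr)\zeta^m=:\sum_{m=0}^\infty b_m\zeta^m .
\end{align*}
One caveat: the definition of $\mathbb{P}\Delta(a(\gamma);r(\gamma))$ has a typo, with $1/(1+\gamma)$ where $r(\gamma)$ says $1/(1-\gamma)$. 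I read it as the product of the disks $\Omega_\gamma$ in each coordinate. With that reading, $\zeta\in\Omega_\gamma$ gives $\zeta e^{i\theta_j}$ in the rotated disk, which need not be $\Omega_\gamma$. So rotation is not directly allowed.

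To avoid rotation, I would instead take $g(\zeta)=f(\zeta,\ldots,\zeta)$, the diagonal slice. Then $g\in\mathcal{B}(\Omega_\gamma)$, and $g(\zeta)=\sum_m c_m\zeta^m$ with $c_m=\sum_{|\alpha|=m}a_\alpha$. This only controls sums of coefficients, not their moduli, so a coefficient-wise estimate is needed. The natural route is the standard lemma from the proof of Theorem B, in its multivariable form: for each multi-index $\alpha\ne0$,
\begin{align*}
|a_\alpha|\le \frac{1-|a_0|^2}{1+\gamma}.
\end{align*}
This should come from the Fournier--Ruscheweyh estimate $|b_m|\le 2(1-|b_0|)/(1+\gamma)$, or its sharper form $(1-|b_0|^2)/(1+\gamma)$, for $\mathcal{B}(\Omega_\gamma)$. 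To isolate a single monomial I would use averaging: for a fixed $\alpha$ with $|\alpha|=m$, put $h(\zeta)=f(\zeta\epsilon_1,\ldots,\zeta\epsilon_n)$ with $\epsilon_j$ roots of unity of suitable order, or with $\epsilon_j\in[0,1]$ varying. Since each coordinate disk $\Omega_\gamma$ is starlike with respect to $0$ and contains $\mathbb{D}$, scaling by factors in $[0,1]$ keeps points inside. I expect this coefficient bound, i.e. reducing to one variable while keeping $|a_\alpha|\le(1-|a_0|^2)/(1+\gamma)$, to be the main obstacle. If the reduction only yields the bound for the homogeneous sums, I would fall back to estimating $\sum_{|\alpha|=m}|a_\alpha|r^\alpha$ by a count times the single bound.

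Granting the bound, write $a=|a_0|$ and estimate
\begin{align*}
\sum_{|\alpha|=m}|a_\alpha|r^\alpha\le \frac{1-a^2}{1+\gamma}\sum_{|\alpha|=m}\mathbf{r}^m .
\end{align*}
Then use the crude inequality $\sum_{|\alpha|=m}\mathbf{r}^{|\alpha|}\le (n\mathbf{r})^m$, which holds by the multinomial expansion of $(r_1+\cdots+r_n)^m\le(n\mathbf{r})^m$ restricted to distinct monomials. With $\rho=n\mathbf{r}$ this gives
\begin{align*}
\text{majorant}\le a+\frac{(1-a^2)\rho}{(1+\gamma)(1-\rho)}.
\end{align*}
Likewise $\sum_{|\alpha|=m}|a_\alpha|^2r^{2\alpha}\le\frac{(1-a^2)^2}{(1+\gamma)^2}\rho^{2m}$, so
\begin{align*}
\sum_m m\,(\cdots)\le\frac{(1-a^2)^2}{(1+\gamma)^2}\,\frac{\rho^2}{(1-\rho^2)^2}.
\end{align*}
At $\rho=\rho_\gamma$ we have $\rho/(1-\rho)=(1+\gamma)/2$, so the total is at most
\begin{align*}
a+\frac{1-a^2}{2}+\frac{8}{9}(1-a^2)^2\frac{\rho^2}{(1+\gamma)^2(1-\rho^2)^2}.
\end{align*}
This is exactly the one-variable expression from Evdoridis et al. with $r(1-\gamma)$ replaced appropriately. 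I would then verify $\Phi(a)\le1$ on $[0,1]$ by the same calculus as in Theorem C. Here $1-\Phi(a)=\frac{(1-a)^2}{2}-\frac{8}{9}(1-a^2)^2K$, so one needs $K(1+a)^2\le 9/16$, and I would check $K\le 9/64$. If the constant mismatches, I would tighten the area estimate using $\sum|a_\alpha|^2r^{2\alpha}\le\bigl(\sum|a_\alpha|r^\alpha\bigr)^2$-type bounds.

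For sharpness, take $f(z)=\phi\bigl((z_1+\cdots+z_n)/n\bigr)$ composed with the extremal Möbius-type function for $\Omega_\gamma$ from Theorem C. Restricting to $z_1=\cdots=z_n=\mathbf{r}$ gives exactly the one-variable extremal quantity at $n\mathbf{r}$, so neither $8/9$ nor $(1+\gamma)/(n(3+\gamma))$ can be enlarged.
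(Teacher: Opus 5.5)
\textbf{Gap 1: the coefficient bound.} Your argument rests entirely on $|a_\alpha|\le (1-|a_0|^2)/(1+\gamma)$ for every $\alpha\neq 0$, and none of the routes you sketch proves it.
\begin{itemize}
\item The diagonal slice only controls $\sum_{|\alpha|=m}a_\alpha$.
\item Slices $\zeta\mapsto f(\zeta\epsilon_1,\ldots,\zeta\epsilon_n)$ with $|\epsilon_j|=1$ are, as you noticed, only available on $\mathbb{D}$. They give $|a_\alpha|\le 1-|a_0|^2$ and hence only the radius $1/(3n)$.
\item Real scalings $\epsilon\in[0,1]^n$ are admissible, but they only bound the values of $P_m(\epsilon)=\sum_{|\alpha|=m}a_\alpha\epsilon^\alpha$ on the cube. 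That does not bound individual coefficients by the same constant: $(\epsilon_1-\epsilon_2)^2$ has sup $1$ on $[0,1]^2$ but a coefficient $-2$.
\item The fallback ``count times the single bound'' presupposes the single bound.
\end{itemize}
The paper obtains the bound in Lemma~\ref{Lem4}. With $\phi(w)=\bigl((w_j-\gamma)/(1-\gamma)\bigr)_j$, the function $g=f\circ\phi$ is bounded by $1$ on $\mathbb{P}\Delta(0_n;1_n)$, and its Taylor coefficients at $(\gamma,\ldots,\gamma)$ are $a_\alpha/(1-\gamma)^{|\alpha|}$. The polydisk Schwarz--Pick estimate of Lemma~\ref{Lem2} at that point then gives $|a_\alpha|\le(1-|a_0|^2)/(1+\gamma)^N$.

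Your averaging idea can also be repaired, provided you rotate only variables kept inside $\mathbb{D}\subset\Omega_\gamma$. Write $\alpha=(\alpha_1,\alpha')$ with $\alpha_1\ge1$. For $\eta'\in\mathbb{T}^{n-1}$ and $0<\rho<1$, the map $z_1\mapsto f(z_1,\rho\eta')$ lies in $\mathcal{B}(\Omega_\gamma)$. By the case $n=1$ of Lemma~\ref{Lem4} (Fournier--Ruscheweyh), its $z_1^{\alpha_1}$-coefficient $\sum_{\beta'}a_{(\alpha_1,\beta')}\rho^{|\beta'|}\eta'^{\beta'}$ has modulus at most $(1-|f(0,\rho\eta')|^2)/(1+\gamma)$. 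Integrate against $\overline{\eta'}^{\,\alpha'}$ over $\mathbb{T}^{n-1}$ (normalized measure) and use $\int|f(0,\rho\eta')|^2\ge|a_0|^2$. This gives $\rho^{|\alpha'|}|a_\alpha|\le(1-|a_0|^2)/(1+\gamma)$; now let $\rho\to1$.

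Granting the bound, the rest of your inequality argument is correct, since $K=(3+\gamma)^2/(16(2+\gamma)^2)\le 9/64$. Its majorant part is the paper's Lemma~\ref{Lem3} estimate with the affine change of variables undone. Your area-term treatment is actually the sound one: the paper's Lemma~\ref{Lem3} bounds that term via Lemma~\ref{Lem2a}, which concerns coefficients at the origin, but applies it to coefficients at the shifted centre.

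\textbf{Gap 2: the sharpness step.} Let $\psi(w)=\sum_m c_m w^m$ and $f(z)=\psi\bigl((z_1+\cdots+z_n)/n\bigr)$. Then $a_\alpha=c_m n^{-m}\,m!/\alpha!$ for $|\alpha|=m$. At $z_1=\cdots=z_n=\mathbf{r}$ the majorant therefore equals $\sum_m |c_m|\mathbf{r}^m$, the one-variable quantity at $\mathbf{r}$, not at $n\mathbf{r}$. The area term is also no larger than its one-variable counterpart at $\mathbf{r}$. So this example only shows failure for $\mathbf{r}>(1+\gamma)/(3+\gamma)$.

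The paper's proof makes the same slip when it substitutes $\tilde z=nr$, and the claim cannot be repaired for $n\ge2$. Take $\gamma=0$ and $n=2$. Wiener's inequality on the slices $\zeta\mapsto f(\zeta\eta)$, $\eta\in\mathbb{T}^2$, gives $\sum_{|\alpha|=m}|a_\alpha|^2\le(1-|a_0|^2)^2$. There are $m+1$ multi-indices with $|\alpha|=m$, so Cauchy--Schwarz yields $\sum_{|\alpha|=m}|a_\alpha|r^\alpha\le\sqrt{m+1}\,(1-|a_0|^2)\mathbf{r}^m$. With this, one checks that the inequality of Theorem~\ref{Th1} still holds at $\mathbf{r}=0.18$, although $2\mathbf{r}>1/3$. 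This is consistent with Boas--Khavinson's $K_n>1/(3\sqrt n)$.

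Likewise, for $n=1$ and $\gamma=0$ the constant $8/9$ can be raised to $16/9$ by the Kayumov--Ponnusamy theorem. So your approach, once the coefficient bound is supplied, proves the inequality for $n\mathbf{r}\le(1+\gamma)/(3+\gamma)$. Sharpness can only be claimed for the radius, and only when $n=1$.
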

\begin{figure}[H]
	\centering
	\begin{tikzpicture}
		\begin{axis}[
			width=11cm,
			height=6.5cm,
			xmin=0,
			xmax=1,
			ymin=0.30,
			ymax=0.52,
			samples=250,
			domain=0:0.999,
			axis lines=left,
			xlabel={$\gamma$},
			ylabel={Sharp radius $R(\gamma)$},
			xlabel style={font=\small},
			ylabel style={font=\small},
			tick label style={font=\small},
			enlargelimits=false,
		grid=both,
		minor tick num=3,
		major grid style={gray!40},
		minor grid style={gray!20},
			major grid style={gray!30},
			minor tick num=1,
			legend style={
				at={(0.98,0.05)},
				anchor=south east,
				draw=none,
				font=\small
			}
			]
			
			\addplot[
			blue,
			very thick
			]
			{(1+x)/(3+x)};
			
			\addlegendentry{$R(\gamma)=\dfrac{1+\gamma}{3+\gamma}$}

			\addplot[
			only marks,
			mark=*,
			mark size=2.5pt
			]
			coordinates {(0,1/3)};
			
			\node[
			anchor=north west,
			font=\small
			]
			at (axis cs:0,1/3)
			{$\left(0,\frac13\right)$};

			\addplot[
			only marks,
			mark=*,
			mark size=2.5pt
			]
			coordinates {(1,1/2)};
			
			\node[
			anchor=south east,
			font=\small
			]
			at (axis cs:1,1/2)
			{$\left(1,\frac12\right)$};
			
		\end{axis}
	\end{tikzpicture}
	
	\caption{
		Variation of the sharp Bohr radius
		$R(\gamma)=\dfrac{1+\gamma}{3+\gamma}$.
		The radius increases monotonically from $1/3$ to its limiting value $1/2$
		as $\gamma\to1^{-}$.
	}
	
	\label{fig:radius}
\end{figure}
We obtain the following immediate results from Theorem \ref{Th1}.
\begin{cor}\label{cor1} For $0\leq \gamma<1$, let $f\in\mathcal{B}(\mathbb{P}\Delta(a(\gamma);r(\gamma)))$ with $f(z)=\sum\limits_{m=0}^{\infty}\sum\limits_{|\alpha|=m}a_{\alpha} z^{\alpha}$ in $\mathbb{P}\Delta(0_n;1_n)$. Then we have
\begin{align*}
\sum\limits_{m=0}^{\infty}\sum\limits_{|\alpha|=m}|a_{\alpha}|r^{\alpha}\leq 1\quad\;\;\text{for}\;\;n{\bf{r}}\leq \frac{1+\gamma}{3+\gamma},
\end{align*} 
where $r=(|z_1|,|z_2|,\ldots,|z_n|)=(r_1,r_2,\ldots,r_n)$ such that ${\bf{r}}=\|z\|_{\infty}$.
The number $(1+\gamma)/(n(3+\gamma))$ cannot be replaced by a larger quantity.		
\end{cor}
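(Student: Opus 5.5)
The inequality itself follows at once from Theorem \ref{Th1}. Every term $m\sum_{|\alpha|=m}|a_{\alpha}|^2r^{2\alpha}$ is nonnegative, so dropping the area-type sum $\frac{8}{9}\sum_{m\geq 1} m\sum_{|\alpha|=m}|a_{\alpha}|^2r^{2\alpha}$ from the left-hand side can only decrease it. Hence for $n{\bf r}\leq (1+\gamma)/(3+\gamma)$ we get
\begin{align*}
\sum_{m=0}^{\infty}\sum_{|\alpha|=m}|a_{\alpha}|r^{\alpha}\leq \sum_{m=0}^{\infty}\sum_{|\alpha|=m}|a_{\alpha}|r^{\alpha}+\frac{8}{9}\sum_{m=1}^{\infty}m\sum_{|\alpha|=m}|a_{\alpha}|^2r^{2\alpha}\leq 1 .
\end{align*}

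Sharpness does not follow from Theorem \ref{Th1}: a smaller left-hand side could, in principle, admit a larger radius. So I would give an explicit extremal family, reducing to the one-variable Fournier--Ruscheweyh extremal for $\Omega_{\gamma}$ (Theorem B) through the linear form $z_1+\cdots+z_n$.

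For $a\in(0,1)$, let $\varphi_a$ be the one-variable extremal function for Theorem B. This is a self-map of $\mathbb{D}$ precomposed with the affine map sending $\Omega_{\gamma}$ onto $\mathbb{D}$, so $\varphi_a\in\mathcal{B}(\Omega_{\gamma})$, and its majorant $\sum_k|b_k(a)|\rho^k$ exceeds $1$ for every $\rho>(1+\gamma)/(3+\gamma)$ once $a$ is close to $1$. Set
\begin{align*}
f(z)=\varphi_a\!\left(\frac{z_1+\cdots+z_n}{n}\right).
\end{align*}
If $z$ lies in the shifted polydisk, then the average $(z_1+\cdots+z_n)/n$ lies in the convex set $\Omega_{\gamma}$, so $f$ maps the polydisk into $\ol{\mathbb{D}}$. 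Now take $z_j={\bf r}$ for all $j$ with $n{\bf r}=\rho$, i.e. ${\bf r}=\rho/n$. The coefficients of $(z_1+\cdots+z_n)^k/n^k$ are positive multinomials divided by $n^k$, so the majorant of $f$ is at least $\sum_k|b_k|(n{\bf r}/n)^k=\sum_k|b_k|(\rho/n)^k$.

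This evaluation loses the factor $n$, so averaging does not give sharpness at $n{\bf r}=\rho$. I would instead use $f(z)=\varphi_a(z_1+\cdots+z_n)$, whose majorant at $z_j={\bf r}$ equals $\sum_k|b_k|(n{\bf r})^k$. This exceeds $1$ when $n{\bf r}>(1+\gamma)/(3+\gamma)$, which is exactly the required sharpness.

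The main obstacle is showing that this $f$ belongs to $\mathcal{B}(\mathbb{P}\Delta(a(\gamma);r(\gamma)))$. The sum $z_1+\cdots+z_n$ ranges over $n$ times the disk, not over $\Omega_{\gamma}$. The first thing to try is to check, under the paper's normalization of the shifted polydisk, whether the sum of coordinates stays in the domain of $\varphi_a$. Failing that, replace $\varphi_a$ by the appropriate Möbius extremal for the image disk of $z_1+\cdots+z_n$, and verify by direct coefficient computation that its majorant still crosses $1$ precisely at $n{\bf r}=(1+\gamma)/(3+\gamma)$ as $a\to1^{-}$.
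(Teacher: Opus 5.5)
Your derivation of the inequality is correct and matches the paper, which obtains the corollary ``immediately'' from Theorem~\ref{Th1} by dropping the nonnegative term $\frac{8}{9}\sum_{m\ge1}m\sum_{|\alpha|=m}|a_\alpha|^2r^{2\alpha}$. You are also right that sharpness is not inherited this way. Your computation for the averaged function $\varphi_a((z_1+\cdots+z_n)/n)$ is correct too: at $(\mathbf r,\ldots,\mathbf r)$ the multinomial coefficients divided by $n^k$ sum to exactly $1$, so the majorant is $\sum_k|b_k|\mathbf r^k$. This is exactly the slip in the paper's own sharpness argument for Theorem~\ref{Th1}, which the corollary relies on. There $g_0$ is evaluated at $z=(nr,\ldots,nr)$, a point with $\|z\|_\infty=nr$. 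So that computation only shows the optimal radius in $\|z\|_\infty$ is at most $(1+\gamma)/(3+\gamma)$.

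The gap is the sharpness half, and it cannot be closed, because for $n\ge 2$ that assertion is false.

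\textbf{Your candidate fails.} $\varphi_a(z_1+\cdots+z_n)$ is not in the class. The sum $z_1+\cdots+z_n$ fills the disk $n\Omega_\gamma$, with centre $-n\gamma/(1-\gamma)$ and radius $n/(1-\gamma)$, which even contains the pole of $\varphi_a$. Your fallback, the M\"obius extremal adapted to that disk, is $\varphi_a(w/n)$. That is just your averaged function again, so its majorant still crosses $1$ at $\mathbf r=(1+\gamma)/(3+\gamma)$, not at $n\mathbf r=(1+\gamma)/(3+\gamma)$.

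\textbf{No extremal can work.} Use Lemma~\ref{Lem4}, together with $r^\alpha\le\mathbf r^{|\alpha|}$ and $\#\{\alpha:|\alpha|=m\}=\binom{m+n-1}{n-1}$. This gives
\begin{align*}
\sum_{m=0}^{\infty}\sum_{|\alpha|=m}|a_\alpha|r^\alpha\le |a_0|+\frac{1-|a_0|^2}{1+\gamma}\left((1-\mathbf r)^{-n}-1\right)\le 1
\quad\text{whenever}\quad (1-\mathbf r)^{-n}\le\frac{3+\gamma}{2},
\end{align*}
that is, for $\mathbf r\le 1-\left(2/(3+\gamma)\right)^{1/n}$. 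Write $x=(1+\gamma)/(3+\gamma)$, so that $2/(3+\gamma)=1-x$. Strict concavity of $t\mapsto t^{1/n}$ gives $(1-x)^{1/n}<1-x/n$ for $n\ge 2$. Hence $1-(2/(3+\gamma))^{1/n}>(1+\gamma)/(n(3+\gamma))$ for every $n\ge2$ and every $\gamma\in[0,1)$. For $\gamma=0$ one even has the Boas--Khavinson bound $K_n>1/(3\sqrt n)$, far above $1/(3n)$.

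So the clause \emph{cannot be replaced by a larger quantity} holds only for $n=1$, where it is Theorem B. What you (and the paper) can legitimately prove is:
\begin{enumerate}
\item the inequality on $n\mathbf r\le(1+\gamma)/(3+\gamma)$;
\item the upper bound $(1+\gamma)/(3+\gamma)$ on the optimal radius, which your averaged example (or simply $\varphi_a(z_1)$) provides.
\end{enumerate}
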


\begin{cor}\label{cor2} Let $f\in\mathcal{B}(\mathbb{P}\Delta(0_n;1_n))$ with $f(z)=\sum\limits_{m=0}^{\infty}\sum\limits_{|\alpha|=m}a_{\alpha} z^{\alpha}$. Then we have
\begin{align*}
\sum\limits_{m=0}^{\infty}\sum\limits_{|\alpha|=m}|a_{\alpha}|r^{\alpha}\leq 1\quad\;\;\text{for}\;\;n{\bf{r}}\leq \frac{1}{3},
\end{align*} 
where $r=(|z_1|,|z_2|,\ldots,|z_n|)=(r_1,r_2,\ldots,r_n)$ such that ${\bf{r}}=\|z\|_{\infty}$.
The number $1/3n$ cannot be replaced by a larger quantity.		
\end{cor}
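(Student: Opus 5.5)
Corollary \ref{cor2} is the special case $\gamma=0$ of Corollary \ref{cor1}. Indeed, for $\gamma=0$ we have $a(0)=0_n$ and $r(0)=1_n$, so $\mathbb{P}\Delta(a(0);r(0))=\mathbb{P}\Delta(0_n;1_n)$ and $(1+\gamma)/(3+\gamma)=1/3$. Corollary \ref{cor1} is in turn immediate from Theorem \ref{Th1}, since the area term $\frac{8}{9}\sum_{m\ge1}m\sum_{|\alpha|=m}|a_\alpha|^2r^{2\alpha}$ is nonnegative and may be dropped. So the inequality part needs nothing new. Still, I would give a direct self-contained argument, since it also shows where the factor $n$ enters.

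\emph{Inequality.} Fix $z$ with ${\bf r}=\|z\|_\infty$ and $n{\bf r}\le 1/3$, and write $f=\sum_m P_m$. For $\zeta$ on the unit torus, apply Theorem A (Wiener's estimate) to the slice $g(\lambda)=f(\lambda\zeta)$, $\lambda\in\mathbb{D}$. This gives $|P_m(\zeta)|\le 1-|a_0|^2$ for $m\ge1$. Next, the coefficient of $\zeta^\alpha$ in $P_m$ is a Fourier coefficient on the torus, so $|a_\alpha|\le \sup_{\mathbb{T}^n}|P_m|\le 1-|a_0|^2$ for every $|\alpha|=m\ge1$.

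The key counting step bounds $\sum_{|\alpha|=m}r^\alpha$. Since $\sum_{|\alpha|=m}r^\alpha\le (r_1+\cdots+r_n)^m\le (n{\bf r})^m$ (the multinomial coefficients are at least $1$), we get
\[
\sum_{m}\sum_{|\alpha|=m}|a_\alpha|r^\alpha\le |a_0|+(1-|a_0|^2)\sum_{m\ge1}(n{\bf r})^m\le |a_0|+(1-|a_0|^2)\tfrac12\le 1
\]
when $n{\bf r}\le 1/3$. This reduces the estimate to the one-variable Bohr computation with $r$ replaced by $n{\bf r}$.

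\emph{Sharpness.} I expect this to be the main obstacle. The natural extremal candidate is $f(z)=\phi_a\big(\frac{z_1+\cdots+z_n}{n}\big)$ with $\phi_a(w)=(a-w)/(1-aw)$. It maps the polydisk into $\mathbb{D}$, and at $z=({\bf r},\ldots,{\bf r})$ its majorant equals $a+(1-a^2)\sum_{m\ge1}a^{m-1}{\bf r}^m$, which exceeds $1$ as $a\to1$ only when ${\bf r}>1/3$. This test function therefore shows only that the radius ${\bf r}$ cannot exceed $1/3$, not $1/(3n)$.

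The gap is the crude bound $\sum_{|\alpha|=m}r^\alpha\le(n{\bf r})^m$: multinomial coefficients larger than $1$ are discarded in the upper bound but are not absent in any extremal. So I would first search for a function whose homogeneous parts have many coefficients each of size comparable to $1-|a_0|^2$, for instance $\phi_a$ composed with a suitable polynomial, or products built along several coordinates. If no such function exists, the claim that $1/(3n)$ is optimal may actually be false. In that case I would have to check the paper's intended example carefully, since the known $n$-dimensional Bohr radius behaves like $\sqrt{\log n/n}$, which is much larger than $1/(3n)$.
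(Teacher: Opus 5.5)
Your proof of the inequality is correct and follows the paper. The paper obtains Corollary \ref{cor2} as the case $\gamma=0$ of Theorem \ref{Th1}, with the area term dropped. At $\gamma=0$, Lemma \ref{Lem3} is exactly your coefficient bound $|a_\alpha|\le 1-|a_0|^2$ combined with $\sum_{|\alpha|=m}r^\alpha\le (n\mathbf{r})^m$. The only difference is the source of the coefficient bound: the paper imports it from Liu--Chen (Lemmas \ref{Lem2} and \ref{Lem2a}), while you derive it from slices and Fourier coefficients on $\mathbb{T}^n$. One citation slip: the one-variable fact you use is Wiener's estimate $|a_m|\le 1-|a_0|^2$, not Theorem A, which is Bohr's inequality itself.

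On sharpness, you should settle the question rather than search for extremals. The sharpness clause is false for every $n\ge 2$, and your own slice estimate proves it. Since $|P_m(\zeta)|\le 1-|a_0|^2$ on all of $\mathbb{T}^n$, Parseval gives $\sum_{|\alpha|=m}|a_\alpha|^2\le (1-|a_0|^2)^2$. So functions with many degree-$m$ coefficients of size comparable to $1-|a_0|^2$, which is what you planned to look for, do not exist. Cauchy--Schwarz over the $\tbinom{m+n-1}{m}\le n^m$ multi-indices of degree $m$ then gives
\[
\sum_{|\alpha|=m}|a_\alpha|r^\alpha\le \mathbf{r}^m\sqrt{\tbinom{m+n-1}{m}}\,\Bigl(\sum_{|\alpha|=m}|a_\alpha|^2\Bigr)^{1/2}\le (1-|a_0|^2)\,(\sqrt{n}\,\mathbf{r})^m .
\]
Hence the majorant is at most $|a_0|+(1-|a_0|^2)/2\le 1$ whenever $\sqrt{n}\,\mathbf{r}\le 1/3$. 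This is the Boas--Khavinson lower bound for the Bohr radius $K_n$ of the polydisk. Because the majorant increases in each $r_j$, the optimal constant in the corollary is exactly $K_n$. Moreover $1/(3\sqrt{n})>1/(3n)$ already for $n=2$, whereas the asymptotics $\sqrt{\log n/n}$ you quote only settle large $n$.

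The paper's sharpness argument (that of Theorem \ref{Th1} at $\gamma=0$) uses precisely your test function $\phi_a\big((z_1+\cdots+z_n)/n\big)$. However, it substitutes $\tilde z=nr$ into the one-variable expansion. That computes the majorant at $(nr,\ldots,nr)$, a point of sup-norm $nr$ rather than $r$. At $(r,\ldots,r)$ the weights $n^{-m}m!/\alpha!$ sum to $1$, and one gets your series $a+(1-a^2)\sum_{m\ge1}a^{m-1}r^m$, which exceeds $1$ only for $r>1/3$. So the stated sharpness holds only for $n=1$. What the counting argument does prove sharply is the inequality under $r_1+\cdots+r_n\le 1/3$, with $\phi_a(z_1)$ as extremal.
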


\begin{rem}
Note that Corollary \ref{cor1} extends Theorem B from holomorphic functions of a single complex variable to those of several complex variables. Furthermore, when $\dim(\mathbb{C}^n)=n=1$, Corollary \ref{cor2} recovers the classical Bohr radius $r=1/3$. Thus, our results provide a natural generalization of the classical Bohr theorem to holomorphic functions in several complex variables.	
\end{rem}

Next, we state our second result, which is a multidimensional version of Theorem D.
\begin{theo}\label{Th2} For $0\leq \gamma<1$, let $f\in\mathcal{B}(\mathbb{P}\Delta(a(\gamma);r(\gamma)))$ with $f(z)=\sum\limits_{m=0}^{\infty}\sum\limits_{|\alpha|=m}a_{\alpha} z^{\alpha}$ in $\mathbb{P}\Delta(0_n;1_n)$. Then we have
\begin{align*}
\mathcal{M}_f(nr):=\sum\limits_{m=0}^{\infty}\sum\limits_{|\alpha|=m}|a_{\alpha}|r^{\alpha}+\left(\frac{1}{1+|a_0|}+\frac{r}{1-r}\right)\sum\limits_{m=1}^{\infty}\sum\limits_{|\alpha|=m}|a_{\alpha}|^2r^{2\alpha}\leq 1\;\text{for}\;n{\bf{r}}\leq \frac{1+\gamma}{3+\gamma},
\end{align*}
where $r=(|z_1|,|z_2|,\ldots,|z_n|)=(r_1,r_2,\ldots,r_n)$ such that ${\bf{r}}=\|z\|_{\infty}$.	The number $(1+\gamma)/(n(3+\gamma))$ cannot be improved.	
\end{theo}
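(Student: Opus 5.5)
Throughout, ${\bf r}$ denotes the scalar $\|z\|_\infty$, and the scalar $r$ appearing in the factor $r/(1-r)$ of $\mathcal{M}_f(nr)$ is read as $n{\bf r}$.

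The plan is to reduce Theorem \ref{Th2} to the one-variable Theorem D by slicing along complex lines. Fix $z\in\mathbb{P}\Delta(0_n;1_n)$ with ${\bf r}=\|z\|_\infty$ and $n{\bf r}\le \rho_\gamma=(1+\gamma)/(3+\gamma)$. For $\zeta\in\Omega_\gamma$ define $g(\zeta)=f(\zeta,\zeta,\ldots,\zeta)$. The diagonal map $\zeta\mapsto(\zeta,\ldots,\zeta)$ sends $\Omega_\gamma$ into the shifted polydisk $\mathbb{P}\Delta(a(\gamma);r(\gamma))$, reading its radius as $1/(1-\gamma)$ as in the definition of $r(\gamma)$. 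Hence $g\in\mathcal{B}(\Omega_\gamma)$, and in $\mathbb{D}$ it has expansion $g(\zeta)=\sum_m b_m\zeta^m$ with $b_m=\sum_{|\alpha|=m}a_\alpha$. Diagonal slicing loses the moduli of the individual coefficients, so instead I would use the one-variable coefficient estimates underlying Theorem D directly. These are the Fournier--Ruscheweyh bound $|b_m|\le (1-|b_0|^2)/(1+\gamma)$ for $m\ge1$ in $\mathcal{B}(\Omega_\gamma)$, applied to slices in which individual coefficients appear.

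Concretely, for each multi-index $\alpha$ with $|\alpha|=m\ge1$ I want the estimate $|a_\alpha|\le (1-|a_0|^2)/(1+\gamma)$. To get it, fix $w=(w_1,\ldots,w_n)$ with $|w_j|=1$ and consider $h_w(\zeta)=f(\zeta w_1,\ldots,\zeta w_n)$. Since $|\zeta w_j+\gamma/(1-\gamma)|$ need not stay below the radius for general unimodular $w_j$, the slice only lands in the shifted polydisk when all $w_j=1$. So instead I would average over tori in the variables that do stay inside, namely use $f(\zeta_1,\ldots,\zeta_n)$ restricted to the product $\Omega_\gamma^n$, which equals the shifted polydisk. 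The homogeneous part $P_m(\zeta\cdot)$ of $h$ then gives $|P_m(\zeta,\ldots,\zeta)|$-type bounds. The cleaner route is a Wiener-type lemma on $\Omega_\gamma^n$: for each fixed $\zeta'$, apply the one-variable estimate in each variable successively. This would be a preparatory lemma in Section \ref{Sec-3}. With the per-coefficient bound in hand, $\sum_{|\alpha|=m}|a_\alpha|r^\alpha\le \binom{m+n-1}{n-1}$ terms, which is too many. I therefore expect instead to bound the degree-$m$ block by $|a_\alpha|r^\alpha\le {\bf r}^m\sum_{|\alpha|=m}|a_\alpha|$ and then use $\sum_{|\alpha|=m}|a_\alpha|\le n^m(1-|a_0|^2)/(1+\gamma)$. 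This is the step producing the factor $n$ in the radius: the block is bounded by $(n{\bf r})^m(1-|a_0|^2)/(1+\gamma)$. Establishing this block estimate is the main obstacle. I would try to prove it by applying the one-variable bound to $\zeta\mapsto f(\zeta e^{i\theta_1},\ldots)$ restricted appropriately, or by induction on $n$ using the Fournier--Ruscheweyh inequality variable by variable.

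Granting the block estimate with $\rho=n{\bf r}$, the majorant is at most $|a_0|+(1-|a_0|^2)\rho/((1+\gamma)(1-\rho))$. The quadratic term $\sum_{m\ge1}\sum_{|\alpha|=m}|a_\alpha|^2r^{2\alpha}$ is bounded by $\sum_m(\sum_{|\alpha|=m}|a_\alpha|{\bf r}^m)^2\le (1-|a_0|^2)^2\rho^2/((1+\gamma)^2(1-\rho^2))$. The resulting expression is exactly the one-variable quantity bounded in the proof of Theorem D, with $r$ replaced by $\rho$ and $|a_1|$-type bounds, so the same algebra gives $\le1$ for $\rho\le\rho_\gamma$, since $\rho/((1+\gamma)(1-\rho))\le 1/2$ there.

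For sharpness, take $f(z)=\phi(z_1+\cdots+z_n)/n$-type compositions. Specifically, I would use $f(z)=\psi((z_1+\cdots+z_n)/n)$, where $\psi$ is the extremal for Theorem D on $\Omega_\gamma$ (a Möbius map composed with the affine map of $\Omega_\gamma$ to $\mathbb{D}$, with $a\to1$). The average $(z_1+\cdots+z_n)/n$ maps the shifted polydisk into $\Omega_\gamma$, and at $z=({\bf r},\ldots,{\bf r})$ the majorant equals that of $\psi$ at ${\bf r}$ or at $n{\bf r}$. Which one it is must be checked against the claimed $1/n$ factor, and I expect this to require care.
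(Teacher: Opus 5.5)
\noindent\textbf{The key coefficient estimate is missing.} Your proposal never proves the bound everything rests on, namely $|a_\alpha|\le (1-|a_0|^2)/(1+\gamma)$ for $|\alpha|\ge1$, or equivalently the block bound $\sum_{|\alpha|=m}|a_\alpha|\le n^m(1-|a_0|^2)/(1+\gamma)$. The routes you sketch do not give it.

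Radial slices $\zeta\mapsto f(\zeta w)$ with $w\in\mathbb{T}^n$ leave the shifted polydisk, as you note. Applying the Fournier--Ruscheweyh bound in $z_1$ with $\zeta'=(z_2,\dots,z_n)$ frozen only gives $|c_k(\zeta')|\le(1-|f(0,\zeta')|^2)/(1+\gamma)$, where $c_k(\zeta')=\sum_{\alpha_1=k}a_\alpha(\zeta')^{\alpha'}$. Isolating $a_\alpha$ from $c_k$ then fails in one of two ways. Induction on $n$, applied to $(1+\gamma)c_k$, yields only $|a_\alpha|\le(1+\gamma)^{-2}$ for mixed $\alpha$, so the factor $1-|a_0|^2$ is discarded. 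Cauchy estimates instead replace it by $1-|f(0,\zeta')|^2$ at points $\zeta'\neq0$. Either way the argument breaks, because that factor is indispensable as $|a_0|\to1$.

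The missing idea is the paper's Lemma 4. The affine map $\phi(z)=\bigl((z_1-\gamma)/(1-\gamma),\dots,(z_n-\gamma)/(1-\gamma)\bigr)$ maps $\mathbb{P}\Delta(0_n;1_n)$ onto the shifted polydisk and sends $\gamma_n=(\gamma,\dots,\gamma)$ to $0$. Hence $g=f\circ\phi$ is bounded by $1$ on the unit polydisk, and its Taylor coefficients at the interior point $\gamma_n$ are $a_\alpha/(1-\gamma)^{|\alpha|}$. The Liu--Chen Ruscheweyh-type derivative estimate (Lemma 2) at $\gamma_n$ then gives $|a_\alpha|\le(1-|a_0|^2)(1+\gamma)^{-N}\le(1-|a_0|^2)/(1+\gamma)$, where $N$ is the number of nonzero entries of $\alpha$.

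Your reason for abandoning the per-coefficient bound is also mistaken. Since $\#\{\alpha:|\alpha|=m\}=\binom{m+n-1}{n-1}\le n^m$, or equivalently $\sum_{|\alpha|=m}r^\alpha\le(r_1+\cdots+r_n)^m\le(n{\bf r})^m$, the block estimate you call the main obstacle follows immediately. This is exactly where the paper's factor $n$ comes from. From that point on your outline coincides with the paper's. One caveat: $\rho/((1+\gamma)(1-\rho))\le 1/2$ alone does not absorb the quadratic term. The paper writes the bound as $U(a)=a+A(1-a^2)+B(1-a)(1-a^2)+C(1-a^2)^2$ and uses $U'''\ge0$, $U''(1)\le0$ and $U'(1)=1-2A\ge0$ for $n{\bf r}\le\rho_\gamma=(1+\gamma)/(3+\gamma)$.

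\noindent\textbf{Sharpness: your hesitation is justified, and the answer is negative.} For $f=\psi((z_1+\cdots+z_n)/n)$ with $\psi(\zeta)=\sum_m c_m\zeta^m$ one has $a_\alpha=c_m n^{-m}m!/\alpha!$, hence $\sum_{|\alpha|=m}|a_\alpha|{\bf r}^m=|c_m|{\bf r}^m$. So at $({\bf r},\dots,{\bf r})$ you get the one-variable majorant at ${\bf r}$, not at $n{\bf r}$. This example therefore only shows that ${\bf r}\le\rho_\gamma$ cannot be relaxed.

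The paper's own sharpness argument has the same defect. It substitutes $\tilde z=nr$ on the diagonal, that is, it evaluates at a point with $\|z\|_\infty=nr$. Its condition $nr>\rho_\gamma$ is therefore really ${\bf r}>\rho_\gamma$.

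In fact the claimed sharpness of $\rho_\gamma/n$ is false for $\gamma=0$ and $n\ge2$. Wiener's inequality for the slices $\zeta\mapsto f(\zeta w)$, $w\in\mathbb{T}^n$, gives $|P_m(w)|\le1-|a_0|^2$. Parseval then gives $\sum_{|\alpha|=m}|a_\alpha|^2\le(1-|a_0|^2)^2$, and Cauchy--Schwarz gives $\sum_{|\alpha|=m}|a_\alpha|r^\alpha\le(1-|a_0|^2)(\sqrt{n}\,{\bf r})^m$; this is the Boas--Khavinson argument. Inserting these bounds into $\mathcal{M}_f$ with $n=2$ yields $\mathcal{M}_f\le1$ already at ${\bf r}=1/5>1/6$, under either reading of the factor $r/(1-r)$. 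So only the inequality half of the statement can be proved, and no extremal function will produce the factor $1/n$.
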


Here for a simply connected domain $\Omega_n\subset\mathbb{C}^n$, containing the unit polydisk $\mathbb{P}\Delta(0_n;1_n)$, we introduce
\begin{align}\label{T.1}
\lambda_n=\lambda(\Omega_n)=\sup\limits_{\substack{f\in\mathcal{B}_n(\Omega_n)\\|\alpha|\geq 1}}\left\lbrace \frac{|a_{\alpha}|}{1-|a_0|^2}: a_0\not\equiv f(z)=\sum\limits_{m=0}^{\infty}\sum\limits_{|\alpha|=m}|a_{\alpha}|z^m, \;z\in \mathbb{P}\Delta(0_n;1_n)\right\rbrace.
\end{align}

In this section, our final result is the multidimensional generalization of Theorem E.

\begin{theo}\label{Th3} Let $\Omega_n \supset \mathbb{P}\Delta(0_n;1_n)$ be a simply connected domain in $\mathbb{C}^n$ and $f\in\mathcal{B}(\Omega_n)$ with $f(z)=\sum\limits_{m=0}^{\infty}\sum\limits_{|\alpha|=m}a_{\alpha} z^{\alpha}$ in $\mathbb{P}\Delta(0_n;1_n)$. Then we have
\begin{align*}
\mathcal{B}_f(nr):=\sum\limits_{m=0}^{\infty}\sum\limits_{|\alpha|=m}|a_{\alpha}|r^{\alpha}+2\left(\frac{1+\lambda_n}{1+2\lambda_n}\right)^2\sum\limits_{m=1}^{\infty}m\sum\limits_{|\alpha|=m}|a_{\alpha}|^2r^{2\alpha}\leq 1\;\text{for}\;n{\bf{r}}\leq \frac{1}{1+2\lambda_n}.
\end{align*}	
\end{theo}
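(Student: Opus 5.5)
The plan is to reduce Theorem \ref{Th3} to one-variable coefficient estimates, using the homogeneous expansion $f(z)=\sum_{m}P_m(z)$ and the definition \eqref{T.1} of $\lambda_n$. Fix $r=(r_1,\ldots,r_n)$ with ${\bf r}=\|z\|_\infty$ and $n{\bf r}\le 1/(1+2\lambda_n)$. Write $a_0=f(0)$ and $a=|a_0|$. By \eqref{T.1} we have $|a_\alpha|\le \lambda_n(1-a^2)$ for every $|\alpha|\ge1$. Each monomial satisfies $r^\alpha\le {\bf r}^{m}$ when $|\alpha|=m$. The number of multi-indices of degree $m$ is $\binom{m+n-1}{n-1}\le n^m$. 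Hence
\begin{align*}
\sum_{|\alpha|=m}|a_\alpha| r^\alpha\le \lambda_n(1-a^2)(n{\bf r})^m .
\end{align*}
Setting $\rho=n{\bf r}$, the majorant part is bounded by $a+\lambda_n(1-a^2)\rho/(1-\rho)$. This is exactly the one-variable bound used for Theorem E, with $r$ replaced by $\rho$.

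For the area-type term I will not use the crude bound $|a_\alpha|^2\le\lambda_n^2(1-a^2)^2$ alone. Instead I will use the mixed bound $|a_\alpha|^2\le \lambda_n(1-a^2)|a_\alpha|$ combined with the Cauchy-type estimate. More simply, I will bound directly
\begin{align*}
\sum_{m\ge1} m\sum_{|\alpha|=m}|a_\alpha|^2r^{2\alpha}\le \lambda_n^2(1-a^2)^2\sum_{m\ge1} m\,n^m{\bf r}^{2m}\le \lambda_n^2(1-a^2)^2\sum_{m\ge1} m\rho^{2m} = \lambda_n^2(1-a^2)^2\frac{\rho^2}{(1-\rho^2)^2}.
\end{align*}
Here I used $n^m{\bf r}^{2m}\le (n{\bf r})^{2m}$. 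It then suffices to show that
\begin{align*}
\Phi(a,\rho):=a+\lambda_n(1-a^2)\frac{\rho}{1-\rho}+2\Big(\frac{1+\lambda_n}{1+2\lambda_n}\Big)^2\lambda_n^2(1-a^2)^2\frac{\rho^2}{(1-\rho^2)^2}\le 1
\end{align*}
for $\rho\le 1/(1+2\lambda_n)$ and $a\in[0,1]$.

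$\Phi$ is increasing in $\rho$, so it suffices to take $\rho=\rho_0=1/(1+2\lambda_n)$. At this value
\begin{align*}
\frac{\rho_0}{1-\rho_0}=\frac{1}{2\lambda_n},\qquad \frac{\rho_0^2}{(1-\rho_0^2)^2}=\frac{(1+2\lambda_n)^2}{16\lambda_n^2(1+\lambda_n)^2}.
\end{align*}
Substituting these gives
\begin{align*}
\Phi(a,\rho_0)=a+\tfrac12(1-a^2)+\tfrac18(1-a^2)^2 .
\end{align*}
It remains to verify that $1-a-\tfrac12(1-a^2)-\tfrac18(1-a^2)^2\ge0$. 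Dividing by $(1-a)$ reduces this to $\tfrac12(1-a)\ge \tfrac18(1-a)(1+a)^2$, which holds because $(1+a)^2\le4$. This is precisely the calculation behind Theorem E. I will also handle the constant case, where the term vanishes trivially.

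The main obstacle is the counting step. I need $\binom{m+n-1}{n-1}r^\alpha$ to be dominated by $(n{\bf r})^m$, and the area term needs a weight compatible with $\rho^{2m}$. For the latter I must check that $n^m{\bf r}^{2m}\le n^{2m}{\bf r}^{2m}$, which is trivially true. A second concern is whether $\lambda_n$, as defined in \eqref{T.1}, really bounds every $|a_\alpha|$ and not only the homogeneous parts. The definition takes the supremum over all $|\alpha|\ge1$, so I will use it directly. If it had to be justified, I would use the slice functions $g(\zeta)=f(\zeta z/\|z\|_\infty)$ together with the one-variable Theorem E argument.
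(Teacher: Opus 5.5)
Your proposal is correct and follows the paper's proof. Like the paper, you bound every $|a_\alpha|$ by $\lambda_n(1-|a_0|^2)$, dominate the sums over $|\alpha|=m$ by $(n{\bf r})^m$ and $(n{\bf r})^{2m}$, substitute $n{\bf r}=1/(1+2\lambda_n)$, and reduce to the one-variable inequality $a+\tfrac12(1-a^2)+\tfrac18(1-a^2)^2\le 1$. Your closing step via $(1+a)^2\le 4$ replaces the paper's monotonicity argument for $F(x)=8/(1+x)-5+x^2$. You also make explicit two points the paper leaves implicit: the counting bound $\binom{m+n-1}{n-1}\le n^m$ and the monotonicity in $\rho$.
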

\begin{table}[H]
	\centering
	\renewcommand{\arraystretch}{1.25}
	\begin{tabular}{|p{3.0cm}|p{2.8cm}|p{2.8cm}|p{2.6cm}|}
		\hline
		Result & Domain & Main inequality & Sharp radius\\
		\hline
		Theorem 1 &
		$\mathbb P\Delta(a(\gamma);r(\gamma))$
		&
		Bohr inequality with quadratic refinement
		&
		$\dfrac{1+\gamma}{3+\gamma}$\\
		\hline
		Theorem 2 &
		same
		&
		Improved Bohr inequality
		&
		$\dfrac{1+\gamma}{3+\gamma}$\\
		\hline
		Theorem 3 &
		General simply connected domain
		&
		Weighted Bohr inequality
		&
		$\dfrac{1}{1+2\lambda_n}$\\
		\hline
	\end{tabular}
	\vspace{1cc}
	\caption{Summary of the principal results proved in this paper.}
	\label{tab:summary}
\end{table}

\section{{\bf Auxiliary Lemmas}}\label{Sec-3}
We now recall here the following lemmas.
\begin{lem}\emph{\cite[Theorem 2.2 ]{Chen-Hamada-Ponnusamy-Vijayakumar-JAM-2024}}\label{Lem1}Let $f$ be holomorphic in the polydisk $\mathbb{P}\Delta(0_n;1_n)$ such that $|f(z)|\le 1$ for all $z\in \mathbb{P}\Delta(0_n;1_n)$. Then for all $z\in \mathbb{P}\Delta(0_n;1_n)$, we have
\begin{align*}
|f(z)|\leq \frac{|f(0)|+||z||_{\infty}}{1+|f(0)|||z||_{\infty}}.
\end{align*}
\end{lem}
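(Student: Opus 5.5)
Lemma \ref{Lem1} is a several-variable Schwarz--Pick-type estimate. I would prove it by slicing the polydisk along complex lines through the origin and applying the classical one-variable Schwarz--Pick lemma on each slice.

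\emph{Reduction to one variable.} Fix $z\in\mathbb{P}\Delta(0_n;1_n)$ with $z\neq 0_n$ (the case $z=0_n$ is trivial), and put $\rho=\|z\|_{\infty}\in(0,1)$ and $w=z/\rho$. Then $\|w\|_{\infty}=1$. For $\zeta\in\mathbb{D}$ we have $\|\zeta w\|_{\infty}=|\zeta|<1$, so the function
\begin{align*}
g(\zeta)=f(\zeta w),\qquad \zeta\in\mathbb{D},
\end{align*}
is holomorphic on $\mathbb{D}$, satisfies $|g|\le 1$, and has $g(0)=f(0)$ and $g(\rho)=f(z)$. This is the key step: the polydisk is balanced with respect to the norm $\|\cdot\|_{\infty}$, which is why $\|z\|_{\infty}$ appears in the bound.

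\emph{One-variable estimate.} If $|g(0)|=1$, the maximum principle makes $g$ constant, so $|f(z)|=1$ and the inequality holds with equality. Otherwise, set $a=g(0)$ and define
\begin{align*}
h=\frac{g-a}{1-\bar a g}.
\end{align*}
Then $h$ maps $\mathbb{D}$ into $\overline{\mathbb{D}}$ and $h(0)=0$, so Schwarz's lemma gives $|h(\rho)|\le \rho$. Inverting the M\"obius map, we get $g(\rho)=\dfrac{h(\rho)+a}{1+\bar a h(\rho)}$. It then suffices to verify the elementary fact
\begin{align*}
\max_{|t|\le\rho}\left|\frac{t+a}{1+\bar a t}\right|=\frac{|a|+\rho}{1+|a|\rho}.
\end{align*}
This follows either from the standard identity
\begin{align*}
1-\left|\frac{t+a}{1+\bar a t}\right|^2=\frac{(1-|a|^2)(1-|t|^2)}{|1+\bar a t|^2}
\end{align*}
together with $|1+\bar a t|\le 1+|a|\rho$ and $|t|\le\rho$, or from the triangle inequality combined with the monotonicity of $x\mapsto (x+b)/(1+bx)$. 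Substituting $|a|=|f(0)|$ and $\rho=\|z\|_{\infty}$ yields the stated bound.

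No step is a genuine obstacle. The only points needing care are the degenerate cases: $|f(0)|=1$, handled by the maximum principle, and $z=0_n$. Apart from these, the content is just the observation that every complex line through the origin meets $\mathbb{P}\Delta(0_n;1_n)$ in a disk of radius $1$ in the $\|\cdot\|_\infty$-normalized parameter.
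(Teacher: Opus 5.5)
Your proof is correct. The paper gives no proof of this lemma; it quotes it from Chen--Hamada--Ponnusamy--Vijayakumar (Theorem 2.2 there). Your argument therefore supplies a self-contained proof. You restrict $f$ to the analytic disk $\zeta\mapsto \zeta z/\|z\|_{\infty}$ and then apply the one-variable Schwarz--Pick bound $|g(\rho)|\le (|g(0)|+\rho)/(1+|g(0)|\rho)$. This is the standard proof. It uses only that $\mathbb{P}\Delta(0_n;1_n)$ is the open unit ball of $\|\cdot\|_{\infty}$, so it works verbatim for the unit ball of any norm on $\mathbb{C}^n$ (or of a complex Banach space). It also explains why $\|z\|_{\infty}$, and no other quantity, appears in the bound.

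Two minor points. First, the case $|f(0)|=1$ needs no maximum principle: the right-hand side is then $1$, and $|f(z)|\le 1$ holds by hypothesis. Second, your alternative route via ``the triangle inequality plus monotonicity of $x\mapsto (x+b)/(1+bx)$'' is valid only if it means the strong triangle inequality for the pseudo-hyperbolic distance $\left|\frac{u-v}{1-\bar v u}\right|$. Applying the Euclidean triangle inequality to the numerator and denominator of $\frac{t+a}{1+\bar a t}$ separately gives only the weaker bound $\frac{|a|+\rho}{1-|a|\rho}$. Your first route, via the identity for $1-\left|\frac{t+a}{1+\bar a t}\right|^2$ together with $|t|\le\rho$ and $|1+\bar a t|\le 1+|a|\rho$, is correct and suffices on its own.
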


The following lemma is contained in \cite[Corollary 1.3]{Liu-Chen-IJPAM-2012}.
\begin{lem}\label{Lem2} Let $f$ be holomorphic in the polydisk $\mathbb{P}\Delta(0_n;1_n)$ such that $|f(z)|\leq 1$ for all $z\in \mathbb{P}\Delta(0_n;1_n)$. Then for multi-index $\alpha=(\alpha_1,\ldots,\alpha_n)$, we have
\begin{align*}
\left|\frac{\partial^{|\alpha|} f(z)}{\partial z_1^{\alpha_1}\ldots \partial z_n^{\alpha_n}}\right|\leq \alpha!\frac{1-|f(z)|^2}{(1-||z||_{\infty}^2)^{|\alpha|}}(1+||z||_{\infty})^{|\alpha|-N}
\end{align*}
for all $z\in \mathbb{P}\Delta(0_n;1_n)$, where $N$ is the number of the indices $j$ such that $\alpha_j\neq 0$.
\end{lem}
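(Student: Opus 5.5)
Fix $z\in\mathbb{P}\Delta(0_n;1_n)$ and write $\rho=\|z\|_\infty$. I would move $z$ to the origin with a polydisk automorphism. Then I would combine a Wiener-type coefficient bound at the origin with an explicit expansion of one-variable Möbius maps. The argument reduces to $n$ independent one-variable computations, which is where the factor $(1+\rho)^{|\alpha|-N}$ comes from.

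\emph{Step 1 (coefficient bound at the origin).} Let $g$ be holomorphic on $\mathbb{P}\Delta(0_n;1_n)$ with $|g|\le 1$ and $g(w)=\sum_\beta b_\beta w^\beta$. I claim $|b_\beta|\le 1-|b_0|^2$ for every $\beta\ne 0$. For fixed $\theta\in[0,2\pi]^n$, the slice $k_\theta(\zeta)=g(\zeta e^{i\theta_1},\ldots,\zeta e^{i\theta_n})$ is a self-map of $\overline{\mathbb{D}}$ with $k_\theta(0)=b_0$. By the classical one-variable Wiener inequality, $\bigl|\sum_{|\beta|=m}b_\beta e^{i\langle\beta,\theta\rangle}\bigr|\le 1-|b_0|^2$ for every $m\ge1$. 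Multiplying by $e^{-i\langle\alpha,\theta\rangle}$ with $|\alpha|=m$ and integrating over the torus isolates $b_\alpha$, which gives the claim.

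\emph{Step 2 (change of variables).} Put $\varphi_j(w)=(w+z_j)/(1+\ol z_j w)$ and $g=f\circ(\varphi_1,\ldots,\varphi_n)$, so that $g(0)=f(z)$ and $f(w)=g(\psi_1(w_1),\ldots,\psi_n(w_n))$ with $\psi_j(w)=(w-z_j)/(1-\ol z_jw)$. With $u=w_j-z_j$ we have
\[
\psi_j=\frac{u}{(1-|z_j|^2)-\ol z_j u}=\frac{u}{1-|z_j|^2}\sum_{s\ge0}\Bigl(\frac{\ol z_j u}{1-|z_j|^2}\Bigr)^s .
\]
Hence for $1\le b\le k$ the coefficient of $u^k$ in $\psi_j^{\,b}$ equals $\binom{k-1}{b-1}\ol z_j^{\,k-b}/(1-|z_j|^2)^k$, and $\psi_j^{\,0}=1$ contributes only to $u^0$. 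The Taylor coefficient $\partial^\alpha f(z)/\alpha!$ is the coefficient of $(w-z)^\alpha$ in $\sum_\beta b_\beta\prod_j\psi_j^{\beta_j}$. Only the multi-indices $\beta\ne0$ with $\beta_j\le\alpha_j$, and with $\beta_j=0$ exactly when $\alpha_j=0$, contribute; the case $\alpha=0$ is trivial.

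\emph{Step 3 (summation).} By Step 1 each $|b_\beta|\le 1-|f(z)|^2$, so the coefficient factorizes. For each of the $N$ indices with $\alpha_j=k\ge1$, summing over $b$ gives
\[
\sum_{b=1}^{k}\binom{k-1}{b-1}\frac{|z_j|^{k-b}}{(1-|z_j|^2)^k}=\frac{(1+|z_j|)^{k-1}}{(1-|z_j|^2)^k}\le\frac{(1+\rho)^{k-1}}{(1-\rho^2)^k}.
\]
Here I use that this expression is increasing in $|z_j|\le\rho$. Taking the product over these $j$ yields
\[
\frac{\partial^\alpha f(z)}{\alpha!}\ \text{ bounded in modulus by }\ (1-|f(z)|^2)\,\frac{(1+\rho)^{|\alpha|-N}}{(1-\rho^2)^{|\alpha|}},
\]
which is exactly the statement of Lemma \ref{Lem2}.

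I expect the main obstacle to be the bookkeeping in Step 2: exchanging the summations and confirming that the constraint $\beta_j\ge1$ whenever $\alpha_j\ge1$ produces exactly $N$ factors $(1+\rho)^{-1}$. Step 1 is standard, but the torus-averaging extraction must be stated carefully.
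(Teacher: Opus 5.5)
The paper contains no proof of this lemma. It imports it from Liu--Chen (Corollary 1.3 there), and the coefficient bound you prove in Step 1 is likewise only quoted, as Lemma~\ref{Lem2a}. Your argument is a correct, self-contained proof: it is the polydisk analogue of the M\"obius-expansion proof of Ruscheweyh's one-variable estimate, and the points you flag do go through.

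\begin{itemize}
\item In Step 1 the homogeneous parts $\sum_{|\beta|=m}b_\beta e^{i\langle\beta,\theta\rangle}$ are finite trigonometric polynomials. The torus average therefore isolates $b_\alpha$ with no convergence issue.
\item In Step 2 the rearrangement is justified by absolute convergence. The coefficients of $\psi_j^{\,b}$ have the same moduli as those of $\Psi_j(t)^b$, where $\Psi_j(t)=t/(1-|z_j|^2-|z_j|t)$. Since $|b_\beta|\le1$, the majorant $\sum_\beta\prod_j\Psi_j(|u_j|)^{\beta_j}=\prod_j\bigl(1-\Psi_j(|u_j|)\bigr)^{-1}$ is finite for small $|u_j|$. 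So the coefficient of $(w-z)^\alpha$ can be read off termwise.
\item The monotonicity used in Step 3 holds. Indeed $(1+t)^{k-1}/(1-t^2)^k=1/\bigl((1+t)(1-t)^k\bigr)$, and $\frac{d}{dt}\bigl[(1+t)(1-t)^k\bigr]=(1-t)^{k-1}\bigl(1-k-(1+k)t\bigr)\le0$ for $k\ge1$.
\end{itemize}

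Compared with simply citing the result, your route also yields the sharper coordinatewise estimate
\[
\Bigl|\frac{\partial^{|\alpha|}f(z)}{\partial z_1^{\alpha_1}\cdots\partial z_n^{\alpha_n}}\Bigr|\le\alpha!\,(1-|f(z)|^2)\prod_{j:\,\alpha_j\ge1}\frac{(1+|z_j|)^{\alpha_j-1}}{(1-|z_j|^2)^{\alpha_j}},
\]
and the stated form follows from it by monotonicity.

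One correction: the case $\alpha=0$ is not trivial; it is false. The inequality would read $|f(z)|\le1-|f(z)|^2$, which fails for $f\equiv1$. The lemma must be read for $|\alpha|\ge1$, which is the only case the paper uses. That is also exactly the case your proof covers, because for $\alpha\ne0$ every contributing $\beta$ is nonzero and Step 1 applies.
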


\begin{lem}\label{Lem2a}\emph{ \cite[Lemma 2.1]{Liu-Chen-IJPAM-2012}} Let $f$ be holomorphic in the polydisk $\mathbb{P}\Delta(0_n;1_n)$ such that $|f(z)|\leq 1$ for all $z\in \mathbb{P}\Delta(0_n;1_n)$. Suppose $f(z)=a_0+\sum_{|\alpha|=1}^{\infty}a_{\alpha}z^{\alpha}$
for all $z\in \mathbb{P}\Delta(0_n;1_n)$. Then for any multi-index $\alpha$, we have 
\begin{align*}
|a_{\alpha}|\leq 1-|a_0|^2.
\end{align*}
\end{lem}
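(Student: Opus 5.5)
The statement to prove is Lemma \ref{Lem2a}: if $|f|\le 1$ on $\mathbb{P}\Delta(0_n;1_n)$ and $f(z)=a_0+\sum_{|\alpha|\ge1}a_\alpha z^\alpha$, then $|a_\alpha|\le 1-|a_0|^2$ for every multi-index $\alpha$ with $|\alpha|\ge1$.

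The plan is to reduce to the classical one-variable estimate $|b_k|\le 1-|b_0|^2$ for bounded analytic functions on $\mathbb{D}$, using a torus-averaging (Fourier projection) argument. Fix a multi-index $\alpha$ with $|\alpha|=m\ge 1$.

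\textbf{Step 1 (isolate the monomial).} For $\theta=(\theta_1,\ldots,\theta_n)\in[0,2\pi]^n$ and $\zeta\in\mathbb{D}$, consider the slice
\[
g_\theta(\zeta)=f\bigl(\zeta e^{i\theta_1},\ldots,\zeta e^{i\theta_n}\bigr)=a_0+\sum_{k\ge1}\Bigl(\sum_{|\beta|=k}a_\beta e^{i\langle\beta,\theta\rangle}\Bigr)\zeta^k .
\]
Here $\langle\beta,\theta\rangle=\beta_1\theta_1+\cdots+\beta_n\theta_n$. Since $|\zeta e^{i\theta_j}|<1$, the function $g_\theta$ is analytic on $\mathbb{D}$ with $|g_\theta|\le1$. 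Now form the average
\[
h(\zeta)=\frac{1}{(2\pi)^n}\int_{[0,2\pi]^n} e^{-i\langle\alpha,\theta\rangle}\,g_\theta(\zeta)\,d\theta .
\]
This weighting does not preserve the constant term. To keep $a_0$, I would instead average $\phi(\zeta)=\frac{1}{(2\pi)^n}\int g_\theta(\zeta)\,\psi(\theta)\,d\theta$ with a positive kernel.

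The cleaner route is to apply the one-variable bound to each slice. The coefficient of $\zeta^m$ in $g_\theta$ is $c_m(\theta)=\sum_{|\beta|=m}a_\beta e^{i\langle\beta,\theta\rangle}$, and the one-variable lemma gives $|c_m(\theta)|\le 1-|a_0|^2$ for all $\theta$. Integrating against $e^{-i\langle\alpha,\theta\rangle}$ then gives
\[
|a_\alpha|=\Bigl|\frac{1}{(2\pi)^n}\int c_m(\theta)e^{-i\langle\alpha,\theta\rangle}\,d\theta\Bigr|\le 1-|a_0|^2 .
\]
This is exactly the desired bound.

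\textbf{Step 2 (the one-variable estimate).} It remains to justify $|b_k|\le 1-|b_0|^2$ for $g(\zeta)=\sum_k b_k\zeta^k$ on $\mathbb{D}$ with $|g|\le1$. If $|b_0|=1$, then $g$ is constant and the bound holds trivially. Otherwise, let $\omega=e^{2\pi i/k}$ and average over $k$-th roots of unity:
\[
G(\zeta^k)=\frac1k\sum_{j=0}^{k-1}g(\omega^j\zeta)=b_0+b_k\zeta^k+b_{2k}\zeta^{2k}+\cdots .
\]
Here $G$ is analytic on $\mathbb{D}$, bounded by $1$, with $G(0)=b_0$ and $G'(0)=b_k$. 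The Schwarz--Pick lemma then gives $|G'(0)|\le1-|G(0)|^2$, that is, $|b_k|\le 1-|b_0|^2$.

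\textbf{Expected obstacle.} The main step is Step 2: it needs the roots-of-unity averaging to shift $b_k$ into the derivative position, after which Schwarz--Pick applies. Step 1 is routine once we observe that the diagonal slice preserves both the bound $|f|\le1$ and the constant term $a_0$.
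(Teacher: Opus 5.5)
Your argument is correct, and it is a different route from the paper's, which gives no proof at all: it imports the lemma from Liu--Chen. Within the paper the lemma is in fact the case $z=0$ of Lemma~\ref{Lem2}. At the origin that estimate reads $\bigl|\partial^{|\alpha|}f(0)/\partial z_1^{\alpha_1}\cdots\partial z_n^{\alpha_n}\bigr|\le \alpha!\,(1-|f(0)|^2)$, and $\alpha!\,a_\alpha$ is exactly this derivative.

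Your proof is self-contained and uses only one-variable tools.
\begin{enumerate}
\item You restrict $f$ to the diagonal slices $\zeta\mapsto f(\zeta e^{i\theta_1},\ldots,\zeta e^{i\theta_n})$.
\item You apply the one-variable inequality $|b_m|\le 1-|b_0|^2$, which you derive correctly from Schwarz--Pick via the roots-of-unity average. This gives $|c_m(\theta)|\le 1-|a_0|^2$ uniformly in $\theta$.
\item You recover $a_\alpha$ as a Fourier coefficient of the trigonometric polynomial $c_m(\theta)=P_m(e^{i\theta_1},\ldots,e^{i\theta_n})$.
\end{enumerate}

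Each route buys something different. Yours yields slightly more than the lemma: $|P_m|\le 1-|a_0|^2$ on the torus, hence on the whole polydisk, for each homogeneous part $P_m$. The cited derivative estimate is a genuinely several-variable Schwarz--Pick-type bound valid at every point of the polydisk. That is what the paper needs at shifted centres in Lemmas~\ref{Lem3} and \ref{Lem4}. So once Lemma~\ref{Lem2} is available, the present lemma costs nothing.

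Two small remarks on your write-up. First, delete the abandoned detour in Step~1 (the average $h$ and the ``positive kernel'' sentence); it plays no role. Second, you were right to read the statement as restricted to $|\alpha|\ge 1$. For $\alpha=0$ it would assert $|a_0|\le 1-|a_0|^2$, which fails for $f\equiv 1$.
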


\begin{lem}\label{Lem3} Let $g$ be holomorphic in $\mathbb{P}\Delta(0_n;1_n)$ such that 
$|g(z)|\leq 1$ for all $z\in\mathbb{P}\Delta(0_n;1_n)$. Suppose
\begin{align*}
g(z)=\sum\limits_{m=0}^{\infty} \sum\limits_{|\alpha|=m} a_{\alpha}(z-\gamma)^{\alpha}
\end{align*}
for $z\in \mathbb{P}\Delta\left(\gamma;s\right)$,  where $\gamma=(\gamma_1,\gamma_2,\ldots,\gamma_n)$ and $s=(1-|\gamma_1|,1-|\gamma_2|,\ldots, 1-|\gamma_n|)$ with $|\gamma_i|<1$ for $i=1,2,\ldots,n$.
Then
\begin{align}\label{Eq-L3.1}
\mathcal{L}(r):=\sum\limits_{m=1}^{\infty}\sum\limits_{|\alpha|=m}|a_{\alpha}|r^{\alpha}+\dfrac{8}{9}\sum\limits_{m=1}^{\infty}m\sum\limits_{|\alpha|=m}|a_{\alpha}|^2r^{2\alpha}\leq 1\quad\;\text{for}\;n\|z\|_{\infty}\leq \dfrac{1-\|\gamma\|_{\infty}^2}{3+\|\gamma\|_{\infty}},
\end{align}
where $r=(|z_1|,|z_2|,\ldots,|z_n|)=(r_1,r_2,\ldots,r_n)$.
\end{lem}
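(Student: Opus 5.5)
The plan is to bound each Taylor coefficient at the centre $\gamma$ using Lemma \ref{Lem2}, group the terms by degree $m$, and sum two geometric-type series. Throughout, write $t=\|\gamma\|_\infty$, $\rho=\|z\|_\infty$ and $a_0=g(\gamma)$. Since $|\gamma_i|+s_i=1$, the polydisk $\mathbb{P}\Delta(\gamma;s)$ lies in $\mathbb{P}\Delta(0_n;1_n)$, so the expansion makes sense and
\[
a_\alpha=\frac{1}{\alpha!}\,\frac{\partial^{|\alpha|}g}{\partial z^\alpha}(\gamma).
\]

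\textbf{Step 1 (coefficient bound).} Apply Lemma \ref{Lem2} at the point $z=\gamma$. For $|\alpha|=m\ge 1$ we have $N\ge 1$, so $(1+t)^{m-N}\le (1+t)^{m-1}$, and therefore
\[
|a_\alpha|\le \frac{(1-|a_0|^2)(1+t)^{m-1}}{(1-t^2)^m}=\frac{1-|a_0|^2}{(1+t)(1-t)^m}.
\]

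\textbf{Step 2 (summing over each degree).} Use $r^\alpha\le\rho^m$. The number of multi-indices with $|\alpha|=m$ is $\binom{m+n-1}{n-1}\le n^m$. Put
\[
x:=\frac{n\rho}{1-t}, \qquad \sigma:=1-|a_0|^2\le 1 .
\]
The linear part is then bounded by
\[
\sum_{m\ge1}\frac{\sigma}{1+t}\,x^m=\frac{\sigma}{1+t}\cdot\frac{x}{1-x}.
\]
For the quadratic part, $\sum_{|\alpha|=m}|a_\alpha|^2r^{2\alpha}\le \frac{\sigma^2}{(1+t)^2}\,n^m\frac{\rho^{2m}}{(1-t)^{2m}}\le \frac{\sigma^2}{(1+t)^2}\,x^{2m}$, because $n^m\le n^{2m}$. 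Hence
\[
\sum_{m\ge1} m\sum_{|\alpha|=m}|a_\alpha|^2r^{2\alpha}\le\frac{\sigma^2}{(1+t)^2}\cdot\frac{x^2}{(1-x^2)^2}.
\]

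\textbf{Step 3 (inserting the radius).} The hypothesis $n\rho\le (1-t^2)/(3+t)$ is exactly $x\le (1+t)/(3+t)$. In that case $x/(1-x)\le (1+t)/2$, so the linear part is at most $\sigma/2$. Also
\[
1-x^2\ge \frac{4(2+t)}{(3+t)^2},
\]
so the quadratic term is at most
\[
\sigma^2\,\frac{(3+t)^2}{16(2+t)^2}\le \frac{9}{64}\,\sigma^2,
\]
since $(3+t)/(2+t)$ is decreasing in $t$. Consequently
\[
\mathcal{L}(r)\le \frac{\sigma}{2}+\frac{8}{9}\cdot\frac{9}{64}\,\sigma^2=\frac{\sigma}{2}+\frac{\sigma^2}{8}\le \frac58<1.
\]

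The main point needing care is Step 1: the exponent $|\alpha|-N$ in Lemma \ref{Lem2} must be handled so as to gain the factor $1/(1+t)$, and this factor is exactly what cancels the $(1+t)$ arising from the radius. The counting bound $n^m$ in Step 2 is crude, but Step 3 shows there is room to spare. Monotonicity in $x$ of both series justifies evaluating them at the endpoint.
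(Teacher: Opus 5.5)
Your proof is correct for the lemma as stated, where the first sum starts at $m=1$. It even gives $\mathcal{L}(r)\le 5/8$.

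The skeleton matches the paper's. Lemma \ref{Lem2} at $\gamma$ gives $|a_\alpha|\le(1-|a_0|^2)/((1+t)(1-t)^m)$, the multi-indices are counted by $n^m$, and the linear part gets the same bound $\frac{1-|a_0|^2}{1+t}\cdot\frac{x}{1-x}$. You differ from the paper in two places.

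First, the quadratic sum. The paper invokes Lemma \ref{Lem2a}, $|a_\alpha|\le 1-|a_0|^2$. That lemma is about Taylor coefficients at the origin and is false for the expansion at $\gamma$: for $g(z)=z_1$ and $\gamma=(1/2,0,\ldots,0)$ one gets $a_{(1,0,\ldots,0)}=1>3/4=1-|a_0|^2$. Your reuse of the Lemma \ref{Lem2} bound is therefore the justified step. At the endpoint it gives $\frac{(3+t)^2}{16(2+t)^2}\sigma^2\le\frac{9}{64}\sigma^2$, the same constant the paper obtains from $\tilde A^2(t)\le 9/64$.

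Second, the term $|a_0|$. The paper's proof actually treats the sum with the $m=0$ term included, since that is the form Theorem \ref{Th1} uses. This is why it needs the monotonicity argument for $F(|a_0|)$, whereas you only need $\sigma\le 1$. Your estimates give that stronger form in one more line. With $a=|a_0|$ you have $|a_0|+\mathcal{L}(r)\le a+\frac{1-a^2}{2}+\frac{(1-a^2)^2}{8}$, and
\[
1-\Bigl(a+\frac{1-a^2}{2}+\frac{(1-a^2)^2}{8}\Bigr)=\frac{(1-a)^3(3+a)}{8}\ge 0 .
\]
So your route gives a shorter and sound proof of the version the paper relies on downstream.
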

\begin{proof} Without loss of generality, we may assume that $\gamma_i\in [0,1)$ for $i=1,2,\ldots,n$. 
Observe that $g(\gamma)=a_0$ and 
\begin{align}\label{Eq-L3.2}
\alpha! a_{\alpha}=\frac{\partial^{|\alpha|} g(\gamma)}{\partial z_1^{\alpha_1}\ldots \partial z_n^{\alpha_n}}.
\end{align}

Now by Lemma \ref{Lem2}, we have
\begin{align*}
\left|\frac{\partial^{|\alpha|} g(\gamma)}{\partial z_1^{\alpha_1}\ldots \partial z_n^{\alpha_n}}\right|\leq \alpha!\frac{1-|g(\gamma)|^2}{(1-||\gamma||_{\infty}^2)^{|\alpha|}}(1+||\gamma||_{\infty})^{|\alpha|-N}=&
 \alpha!\frac{1-|a_0|^2}{(1-||\gamma||_{\infty})^{|\alpha|}}\frac{1}{(1+||\gamma||_{\infty})^{N}}.
 \end{align*}
 
Since $1+||\gamma||_{\infty}\geq 1$, it follows that 
\begin{align}\label{Eq-L3.3}
\left|\frac{\partial^{|\alpha|} g(\gamma)}{\partial z_1^{\alpha_1}\ldots \partial z_n^{\alpha_n}}\right|\leq
 \alpha!\frac{1-|a_0|^2}{(1-||\gamma||_{\infty})^{|\alpha|}}\frac{1}{1+||\gamma||_{\infty}}.
 \end{align}
 
Now from \eqref{Eq-L3.2} and \eqref{Eq-L3.3}, we obtain
\begin{align}\label{Eq-L3.4}
\left|a_{\alpha}\right|\leq\frac{1-|a_0|^2}{(1-||\gamma||_{\infty})^{|\alpha|}}\frac{1}{1+||\gamma||_{\infty}}
 \end{align}
 for multi-index $\alpha=(\alpha_1,\alpha_2,\ldots,\alpha_n)$ such that $|\alpha|=m\geq 1$.
 Using \eqref{Eq-L3.4}, we deduce that
\begin{align*}
\sum\limits_{m=1}^{\infty}\sum\limits_{|\alpha|=m}|a_{\alpha}|r^{\alpha}\leq \frac{1-|a_0|^2}{1+||\gamma||_{\infty}}\sum\limits_{m=1}^{\infty}\sum\limits_{|\alpha|=m}\frac{r^{\alpha}}{(1-||\gamma||_{\infty})^{|\alpha|}}\leq \frac{1-|a_0|^2}{1+||\gamma||_{\infty}}\sum\limits_{m=1}^{\infty}\frac{(n\|z\|_{\infty})^{m}}{(1-||\gamma||_{\infty})^{m}}
\end{align*} 
and so
\begin{align}\label{Eq-L3.5}
\sum\limits_{m=1}^{\infty}\sum\limits_{|\alpha|=m}|a_{\alpha}|r^{\alpha}\leq \frac{1-|a_0|^2}{1+||\gamma||_{\infty}}\sum\limits_{m=1}^{\infty}\frac{(n\|z\|_{\infty})^{m}}{(1-||\gamma||_{\infty})^{m}}=\frac{1-|a_0|^2}{1+||\gamma||_{\infty}}\left(\frac{n\|z\|_{\infty}}{1-||\gamma||_{\infty}-n\|z\|_{\infty}}\right)
\end{align} 
provided that $n\|z\|_{\infty}<1-||\gamma||_{\infty}$.

On the other hand, using Lemma \ref{Lem2a}, we deduce that
\begin{align*}
\sum\limits_{m=1}^{\infty}m\sum\limits_{|\alpha|=m}|a_{\alpha}|^2r^{2\alpha}\leq (1-|a_0|^2)^2\sum\limits_{m=1}^{\infty}m\sum\limits_{|\alpha|=m}r^{2\alpha}\leq (1-|a_0|^2)^2\sum\limits_{m=1}^{\infty}m (n\|z\|_{\infty})^{2m}
\end{align*}
and so
\begin{align}\label{Eq-L3.6}
\sum\limits_{m=1}^{\infty}m\sum\limits_{|\alpha|=m}|a_{\alpha}|^2r^{2\alpha}\leq& (1-|a_0|^2)^2(n\|z\|_{\infty})^{2}\sum\limits_{m=1}^{\infty}(m+1) (n^2\|z\|_{\infty}^2)^{m}\\=&\frac{(1-|a_0|^2)^2(n\|z\|_{\infty})^{2}}{(1-n^2\|z\|_{\infty}^2)^2}.\nonumber
\end{align}

Let us take $\|z\|_{\infty}=\bf{r}$ and $t=\|\gamma\|_{\infty}$.
Now using \eqref{Eq-L3.5} and \eqref{Eq-L3.6}, we deduce that
\begin{align}\label{Eq-L3.6a}
\sum\limits_{m=0}^{\infty}\sum\limits_{|\alpha|=m}|a_{\alpha}|r^{\alpha}+\dfrac{8}{9}\sum\limits_{m=1}^{\infty}m\sum\limits_{|\alpha|=m}|a_{\alpha}|^2r^{2\alpha}\leq 1+\Psi(n{\bf r}),
\end{align}
where 
\begin{align}\label{Eq-L3.7}
\Psi(n{\bf{r}})=|a_0|+\frac{(1-|a_0|^2)(n\bf{r})}{(1+t)(1-t-(n\bf{r}))}+\frac{8}{9}\frac{(1-|a_0|^2)^2(n\|z\|_{\infty})^{2}}{(1-n^2\|z\|_{\infty}^2)^2}.
\end{align}
Clearly $\mathcal{L}(r)\leq 1$ provided that $\Psi(n{\bf{r}})\leq 0$. Note that
\begin{align*}
\frac{d \Psi(n\bf{r})}{d (n\bf{r})}=A+B+C+D,
\end{align*}
where
\begin{align*}
A=\frac{1-|a_0|^2}{(1+t)(1-t-n{\bf{r}})},\;B=\frac{16(1-|a_0|^2)^2(n\bf{r})}{9(1-(n\bf{r})^2)^2},
\end{align*}
\begin{align*}
C=\frac{(1-|a_0|^2)(n\bf{r})}{(1+t)(1-t-n\bf{r})^2}\;\;\text{and}\;\;D=\frac{32(1-|a_0|^2)^2(n\bf{r})^3}{9(1-(n\bf{r})^2)^3}.
\end{align*}
\begin{figure}[H]
	\centering
	\begin{tikzpicture}
		\begin{axis}[
			width=11cm,
			height=6.8cm,
			xmin=0,
			xmax=0.45,
			ymin=0.55,
			ymax=1.45,
			samples=300,
			domain=0:0.45,
			axis lines=left,
			xlabel={$x=n\mathbf r$},
			ylabel={$\Psi(x)$},
			grid=major,
			major grid style={gray!30},
			minor grid style={gray!15},
			tick label style={font=\small},
			label style={font=\small},
			legend style={
				font=\footnotesize,
				draw=black,
				fill=white,
				at={(0.10,0.93)},
				anchor=north west,
				inner sep=2pt
			},	every axis plot/.append style={line width=1.2pt}
			]

			\addplot[
			blue,
			smooth
			]
			{0.6
				+0.64*x/(1.5*(0.5-x))
				+(8.0/9.0)*(0.64)^2*x^2/(1-x^2)^2};
			
			\addlegendentry{$\Psi(x)$}

			\addplot[
			red,
			densely dashed,
			line width=1.1pt
			]
			coordinates
			{(0.214286,0.55) (0.214286,1.45)};
			
			\node[
			fill=white,
			draw=red,
			rounded corners,
			font=\small
			]
			at (axis cs:0.1,1.10)
			{$x_0=\dfrac{1-t^2}{3+t}$};
			
		\end{axis}
	\end{tikzpicture}
	
	\caption{
		Graph of the auxiliary function $\Psi(x)$ defined in
		\eqref{Eq-L3.7} for the representative values
		$t=\|\gamma\|_{\infty}=0.5$
		and
		$|a_0|=0.6$.
		The dashed vertical line indicates the admissible radius
		$x_0=(1-t^2)/(3+t)\approx0.214286$.
	}
	\label{fig:PsiRepresentative}
\end{figure}
By the given condition, we have $|a_0|\leq 1$. If ${n\bf{r}}<1-t$, then we see that $A\geq 0$, $B\geq 0$, $C\geq 0$ and $D\geq 0$. Consequently $\Psi({n\bf{r}})$ is an increasing function ${n\bf{r}}$ for all ${n\bf{r}}<1-t$. Since $\frac{1-t^2}{3+t}<1-t$, it follows that $\Psi({n\bf{r}})$ is an increasing function ${n\bf{r}}$ for all ${n\bf{r}}<\frac{1-t^2}{3+t}$ and so
\begin{align}\label{Eq-L3.7a}
\Psi({n\bf{r}})\leq \Psi({n\bf{r}_0})\;\;\mbox{for}\;\;n{\bf{r}}\leq {n{\bf r}_0}:=\frac{1-t^2}{3+t}.
\end{align}

To prove $\Psi({n\bf{r}})\leq 0$ it is sufficient to show that $\Psi({n\bf{r}_0})\leq 0$ for all $|a_0|\leq 1$.

Let 
\begin{align*}
\tilde A(t)=\frac{(3+t)(1-t^2)}{(3+t)^2-(1-t^2)^2}.
\end{align*}

Clearly 
\begin{align*}
	\tilde A'(t)=-\frac{t^6+6t^5+45t^2+66t+10}{(t^2-t-4)^2(t^2+t+2)^2}.
\end{align*}
Therefore $\tilde A(t)$ is a decreasing function of $t$ in $[0,1)$. Since $\tilde A(0)=3/8$ and $\tilde A(1)=0$, it follows that $\tilde A^2$ is also a decreasing function of $t$ in $[0,1]$. Hence 
\begin{align}\label{Eq-L3.8}
\tilde A^2(t)\leq \tilde A^2(0)=\frac{9}{64}.
\end{align}

Now putting ${n{\bf r}}=\frac{1-t^2}{3+t}$ into \eqref{Eq-L3.7} and then simplifying, we get 
\begin{align}\label{Eq-L3.9}
\Psi(n{\bf{r}_0})=\frac{1-|a_0|^2}{2}\left\lbrace 1+\frac{16(1-|a_0|^2)}{9}\tilde A^2(t)-\frac{2}{1+|a_0|}\right\rbrace=
\frac{1-|a_0|^2}{2}F(|a_0|),
\end{align}
where
\begin{align*}
F(x)=1+\frac{16(1-x^2)}{9}\tilde A^2(t)-\frac{2}{1+x},\;\;x\in[0,1]
\end{align*}
 We see that  
 \begin{align*}
 \lim\limits_{x\to 1^{-}}F(x)=0
\end{align*}
and
\begin{align*}
F'(x)=-\frac{32}{9}\tilde A^2(t) x+\frac{2}{(1+x)^2}=\frac{2}{(1+x)^2}\left(1-\frac{16}{9}\tilde A^2(t)x(1+x)^2\right)
 \end{align*}
 and so using \eqref{Eq-L3.8}, we get
 \begin{align*}
 F'(x)\geq \frac{2}{(1+x)^2}\left(1-\frac{x(1+x)^2}{4}\right)\geq 0.
 \end{align*}	
 
 This shows that $F(x)$ is increasing on $[0,1]$. Now if we take $|a_0|\leq x$, where $x\in [0,1]$, then we have $F(|a_0|)\leq F(x)$ and so 
 \begin{align*}
 F(|a_0|)\leq \lim\limits_{x\to 1^{-}}F(x)=0.
 \end{align*}
 Consequently from \eqref{Eq-L3.9}, we have $\Psi(n{\bf{r}_0})\leq 0$. Now from \eqref{Eq-L3.7a}, we see that $\Psi({n\bf{r}})\leq 0$ for $n{\bf{r}}\leq \frac{1-t^2}{3+t}$. Finally from \eqref{Eq-L3.6a}, we have $\mathcal{L}(r)\leq 1$ for $n{\bf{r}}\leq \frac{1-t^2}{3+t}$.
\end{proof}
\begin{figure}[H]
	\centering
	\begin{tikzpicture}[scale=1.0,line join=round,line cap=round,
>=Latex,font=\small]

\fill[
blue!2,
rounded corners=10pt
]
(-1.5,-1.6) rectangle (13.8,5.0);
\coordinate (O1) at (0,0);
\draw[->,thick] (O1)--++(3.5,0) node[right]{$z_1$};
\draw[->,thick] (O1)--++(0,3.8) node[above]{$z_n$};
\draw[->,thick] (O1)--++(-1.3,-1.3) node[left]{$z_2$};

\begin{scope}[shift={(0.55,0.55)}]

	\fill[
	blue!8,
	rounded corners=5pt
	]
	(-.18,-.18) rectangle (2.78,2.78);
	
	\filldraw[
	fill=blue!15,
	fill opacity=.35,
	draw=blue!70!black,
	very thick
	]
	(0,0)--(2,0)--(2,2)--(0,2)--cycle;
\filldraw[top color=cyan!15,
bottom color=blue!20,
draw=blue!60!black,fill opacity=.25,draw=blue!50!black,very thick]
(0,0)--(2,0)--(2,2)--(0,2)--cycle;
\filldraw[fill=blue!10,fill opacity=.25,draw=blue!70!black,very thick]
(.6,.6)--(2.6,.6)--(2.6,2.6)--(.6,2.6)--cycle;
\draw[blue!70!black,very thick](0,0)--(.6,.6);
\draw[blue!70!black,very thick](2,0)--(2.6,.6);
\draw[blue!70!black,very thick](2,2)--(2.6,2.6);
\draw[blue!70!black,very thick](0,2)--(.6,2.6);
\filldraw[
fill=yellow,
draw=blue!90!black,
line width=.5pt
]
(1.3,1.2)circle(2.2pt);
\node[right] at (1.3,1.2){$0_n$};

\end{scope}
\node[blue] at (2.5,3.8){Unit polydisk};
\node[blue] at (2.5,3.35){$\mathbb{P}\Delta(0_n;1_n)$};

\coordinate (O2) at (9,0);
\draw[
->,
>=Stealth,
line width=1.3pt,
black!80
] (O2)--++(3.5,0) node[right]{$z_1$};
\draw[
->,
>=Stealth,
line width=1.3pt,
black!80
] (O2)--++(0,3.5) node[above]{$z_n$};
\draw[
->,
>=Stealth,
line width=1.3pt,
black!80
] (O2)--++(-1.4,-1.4) node[left]{$z_2$};

\begin{scope}[shift={(9.55,0.75)}]

	\fill[
	red!8,
	rounded corners=5pt
	]
	(-.18,-.18) rectangle (2.78,2.78);
	
	\filldraw[
	fill=red!15,
	fill opacity=.35,
	draw=red!80!black,
	very thick
	]
	(0,0)--(2,0)--(2,2)--(0,2)--cycle;
\filldraw[top color=orange!20,
bottom color=red!40,
draw=red!60!black,fill opacity=.25,draw=red!60!black,very thick]
(0,0)--(2,0)--(2,2)--(0,2)--cycle;
\filldraw[fill=red!10,fill opacity=.25,draw=red!80!black,very thick]
(.6,.6)--(2.6,.6)--(2.6,2.6)--(.6,2.6)--cycle;
\draw[red!80!black,very thick](0,0)--(.6,.6);
\draw[red!80!black,very thick](2,0)--(2.6,.6);
\draw[red!80!black,very thick](2,2)--(2.6,2.6);
\draw[red!80!black,very thick](0,2)--(.6,2.6);
\filldraw[
fill=yellow,
draw=red!90!black,
line width=.5pt
]
(1.45,1.35) circle (2.2pt);
\node[right,red!80!black] at (1.45,1.35){$a(\gamma)$};

\end{scope}
\node[red!80!black] at (11.5,4.1){Shifted polydisk};
\node[red!80!black] at (11.5,3.6){$\mathbb{P}\Delta(a(\gamma);r(\gamma))$};

\draw[->,line width=1.5pt,purple!80] (4.1,1.7)--(7.5,1.7);
\node[purple!80] at (5.9,2.15){$\phi(z)=\dfrac{z-\gamma}{1-\gamma}$};

\node[draw=purple!70!black,rounded corners,fill=purple!10,
draw=purple!80,
very thick,
drop shadow,align=center]
at (5.7,.25)
{Affine map\\[2mm]
$\phi_j(z_j)=\dfrac{z_j-\gamma}{1-\gamma}$\\
$(j=1,\ldots,n)$};
\node[
font=\scriptsize,
align=center,
text=black!70
] at (6,-1) {};

\draw[
decorate,
decoration={brace,mirror,amplitude=6pt},
blue!90!black,
ultra thick
](0.55,-0.35)--(2.55,-0.35)
node[
midway,
below=7pt,
font=\scriptsize,
text=black!70
]{radius $1$};

\draw[decorate,decoration={brace,mirror,amplitude=6pt},red!90!black,
ultra thick
]
(9.55,-0.35)--(11.55,-0.35)
node[midway,below=7pt]{$r(\gamma)=\dfrac{1-\gamma}{1+\gamma}$};

\end{tikzpicture}
	\caption{
		Affine transformation from the unit polydisk
		$\mathbb P\Delta(0_n;1_n)$
		onto the shifted polydisk
		$\mathbb P\Delta(a(\gamma);r(\gamma))$.
		The cubes are symbolic representations of Cartesian products of $n$ disks; they are not geometric cubes nor drawn to scale.
	}
	\label{fig:AffineMap}
\end{figure}

\begin{lem}\label{Lem4} Let $f$ be holomorphic in the polydisk $\mathbb{P}\Delta\left(a(\gamma);r(\gamma)\right)$ such that $|f(z)|\leq 1$ for all $z\in \mathbb{P}\Delta\left(a(\gamma);r(\gamma)\right)$. Suppose 
\begin{align*}
f(z)=a_0+\sum\limits_{m=1}^{\infty}\sum\limits_{|\alpha|=m}a_{\alpha}z^{\alpha}
\end{align*}
for all $z\in \mathbb{P}\Delta(0_n;1_n)$. Then 
\begin{align*}
|a_{\alpha}|\leq \frac{1-|a_0|^2}{1+\gamma}
\end{align*}
for multi-index $\alpha=(\alpha_1,\ldots, \alpha_n)$ such that $|\alpha|=m\geq 1$.
\end{lem}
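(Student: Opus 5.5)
Since $\mathbb{P}\Delta(a(\gamma);r(\gamma))$ contains the unit polydisk, I will transport the problem to the unit polydisk by an affine change of variables and then apply Lemma \ref{Lem2a}. There is a typo in the definition of the shifted polydisk: the radius is written as $1/(1+\gamma)$ in one place but $r(\gamma)=1/(1-\gamma)$ in another. I take the radius to be $1/(1-\gamma)$, so each coordinate disk is $\Omega_\gamma$ and contains $\mathbb{D}$.

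\textbf{Change of variables.} Put $z_i=\phi(w_i):=\frac{w_i-\gamma}{1-\gamma}$, so that $w_i=(1-\gamma)z_i+\gamma$. Then $|w_i|<1$ exactly when $|z_i+\frac{\gamma}{1-\gamma}|<\frac{1}{1-\gamma}$. Define
\[
g(w)=f\bigl(\phi(w_1),\ldots,\phi(w_n)\bigr).
\]
This $g$ is holomorphic on $\mathbb{P}\Delta(0_n;1_n)$ with $|g|\le 1$. The origin $z=0_n$ corresponds to the point $w=\gamma_n=(\gamma,\ldots,\gamma)$, and near that point
\[
g(w)=\sum_{\alpha}a_{\alpha}(1-\gamma)^{-|\alpha|}(w-\gamma_n)^{\alpha}.
\]
So the Taylor coefficients of $g$ at $\gamma_n$ are $b_\alpha=a_\alpha(1-\gamma)^{-|\alpha|}$, with $b_0=a_0$.

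\textbf{A first estimate via Lemma \ref{Lem2}.} Apply Lemma \ref{Lem2} at $z=\gamma_n$, where $\|\gamma_n\|_\infty=\gamma$. Since $\alpha!\,b_\alpha=\partial^\alpha g(\gamma_n)$, this gives
\[
|b_\alpha|\le \frac{1-|a_0|^2}{(1-\gamma^2)^{|\alpha|}}(1+\gamma)^{|\alpha|-N}.
\]
Multiplying by $(1-\gamma)^{|\alpha|}$ yields
\[
|a_\alpha|\le (1-|a_0|^2)(1+\gamma)^{-N},
\]
where $N\ge 1$ is the number of nonzero entries of $\alpha$. Since $N\ge 1$, we get $(1+\gamma)^{-N}\le (1+\gamma)^{-1}$, which is exactly the claimed bound.

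\textbf{Cleaner alternative.} To avoid relying on the precise exponent in Lemma \ref{Lem2}, I can instead reduce to one variable. Fix the multi-index $\alpha$ and a direction $\zeta=(\zeta_1,\ldots,\zeta_n)$ with $|\zeta_j|=1$, and set $h(\xi)=f(\xi\zeta)$ for $\xi\in\Omega_\gamma$. Then $h\in\mathcal{B}(\Omega_\gamma)$, and its coefficient of $\xi^m$ is $\sum_{|\beta|=m}a_\beta\zeta^\beta$. The one-variable estimate of Fournier--Ruscheweyh for $\Omega_\gamma$ (the key lemma behind Theorem B) gives
\[
\Bigl|\sum_{|\beta|=m}a_\beta\zeta^\beta\Bigr|\le \frac{1-|a_0|^2}{1+\gamma}.
\]
Averaging against $\overline{\zeta^\alpha}$ over the torus $|\zeta_j|=1$ then isolates $a_\alpha$ and preserves the bound.

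\textbf{Main obstacle.} The main step is pinning down the one-variable bound $|c_m|\le(1-|c_0|^2)/(1+\gamma)$ on $\Omega_\gamma$, which amounts to checking that the exponent bookkeeping in Lemma \ref{Lem2} gives exactly the factor $(1+\gamma)^{-1}$. I would rely primarily on the Lemma \ref{Lem2} computation above, since it is self-contained within the paper.
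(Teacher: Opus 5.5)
Your main argument is exactly the paper's proof, and it is correct. You pull back by $\phi$ to get $g$ on $\mathbb{P}\Delta(0_n;1_n)$ with Taylor coefficients $b_\alpha=a_\alpha(1-\gamma)^{-|\alpha|}$ at $\gamma_n$. You then apply Lemma~\ref{Lem2} at $\gamma_n$ and use $(1+\gamma)^{-N}\le(1+\gamma)^{-1}$.

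You are also right that the radius in the displayed definition of the shifted polydisk should be $1/(1-\gamma)$. One small correction to your opening: it is Lemma~\ref{Lem2}, not Lemma~\ref{Lem2a}, that does the work. Lemma~\ref{Lem2a} only controls coefficients at the origin of the unit polydisk, whereas after the change of variables the expansion point is $\gamma_n$.

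You should drop the ``cleaner alternative'', because its first step fails for every $\gamma>0$. The shifted polydisk is not invariant under the rotations $z\mapsto(\zeta_1z_1,\ldots,\zeta_nz_n)$, so $\xi\in\Omega_\gamma$ does not imply $\xi\zeta\in\mathbb{P}\Delta(a(\gamma);r(\gamma))$. For example, take $\zeta_1=-1$ and a real $\xi\in\Omega_\gamma$ with $\xi<-1$; such points exist since $\Omega_\gamma\cap\mathbb{R}=\bigl(-\tfrac{1+\gamma}{1-\gamma},1\bigr)$. Then $\xi\zeta_1>1$ lies outside $\Omega_\gamma\subset\{\mathrm{Re}\,w<1\}$.

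Thus $h(\xi)=f(\xi\zeta)$ is in general defined only on $\bigcap_j\overline{\zeta_j}\,\Omega_\gamma$, a proper subdomain of $\Omega_\gamma$. Its hyperbolic density at $0$ is strictly larger than $1/(1+\gamma)$, so the Fournier--Ruscheweyh bound cannot be invoked for $h$, and a one-variable bound on that smaller domain would be weaker. The slice does stay in $\Omega_\gamma$ when $\zeta=(1,\ldots,1)$, but a single direction cannot isolate $a_\alpha$ by averaging over the torus. So the one-variable reduction, as written, does not prove the lemma; the Lemma~\ref{Lem2} route is the one that works.
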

\begin{proof} Let us define
\begin{align}\label{Eq-L4.2}
\phi(z)=\left(\frac{z_1-\gamma}{1-\gamma},\frac{z_2-\gamma}{1-\gamma},\ldots,\frac{z_n-\gamma}{1-\gamma}\right).
\end{align}
Clearly $\phi(z)\in \mathbb{P}\Delta\left(a(\gamma);r(\gamma)\right)$ if and only if $z\in  \mathbb{P}\Delta(0_n;1_n)$.
Then $g:\mathbb{P}\Delta(0_n;1_n)\to \mathbb{C}$ defined by
 \begin{align*}
 g(z)=(f\circ \phi)(z)=b_0+\sum\limits_{m=1}^{\infty}\sum\limits_{|\alpha|=m}b_{\alpha}(z-\gamma_n)^{\alpha},
 \end{align*}
 where $b_0=a_0$ and 
 \begin{align}\label{Eq-L4.3}
 b_{\alpha}=\dfrac{a_{\alpha}}{(1_n-\gamma_n)^{\alpha}}=\dfrac{a_{\alpha}}{(1-\gamma)^{|\alpha|}}
 \end{align}
 for multi-index $\alpha=(\alpha_1,\ldots, \alpha_n)$ such that $|\alpha|=m$.
 Obviously $g(z)$ is holomorphic such that $|g(z)|\leq 1$ for all $z\in\mathbb{P}\Delta(0_n;1_n)$ and $g(\gamma_n)=b_0=a_0$. Now by Lemma \ref{Lem2}, we have
 \begin{align*}
 \alpha!|b_{\alpha}|=\left|\frac{\partial^{|\alpha|} g(\gamma_n)}{\partial z_1^{\alpha_1}\ldots \partial z_n^{\alpha_n}}\right|\leq& \alpha!\frac{1-|g(\gamma_n)|^2}{(1-||\gamma_n||_{\infty}^2)^{|\alpha|}}(1+||\gamma_n||_{\infty})^{|\alpha|-N}\\=&
 	\alpha!\frac{1-|a_0|^2}{(1-\gamma^2)^{|\alpha|}}(1+\gamma)^{|\alpha|-N}
 \end{align*}
 and so from \eqref{Eq-L4.3}, we obtain
 \begin{align*}
 |a_{\alpha}|\leq
 \frac{1-|a_0|^2}{(1+\gamma)^{N}}\leq \frac{1-|a_0|^2}{1+\gamma}
 \end{align*}
 for multi-index $\alpha=(\alpha_1,\ldots, \alpha_n)$ such that $|\alpha|=m\geq 1$.
\end{proof}
\begin{figure}[H]
	\centering
	\begin{tikzpicture}[
		node distance=1.7cm,
		every node/.style={draw,rounded corners,align=center,font=\small},
		>=latex]
		
		\node (L1) {Lemma 1\\ Schwarz--Pick estimate};
		
		\node (L2) [below left=of L1]
		{Lemma 2\\Derivative estimate};
		
		\node (L3) [below right=of L1]
		{Lemma 2a\\Coefficient estimate};
		
		\node (L4) [below=2cm of L1]
		{Lemma 3\\Main Bohr inequality};
		
		\node (L5) [below=of L4]
		{Lemma 4\\Coefficient bound\\on shifted polydisk};
		
		\node (T1) [below left=2cm and 2cm of L5]
		{Theorem 1};
		
		\node (T2) [below=of L5]
		{Theorem 2};
		
		\node (T3) [below right=2cm and 2cm of L5]
		{Theorem 3};
		
		\draw[->,thick] (L1)--(L4);
		\draw[->,thick] (L2)--(L4);
		\draw[->,thick] (L3)--(L4);
		
		\draw[->,thick] (L4)--(L5);
		
		\draw[->,thick] (L5)--(T1);
		\draw[->,thick] (L5)--(T2);
		\draw[->,thick] (L5)--(T3);
		
	\end{tikzpicture}
	\caption{Logical dependence of the auxiliary lemmas and the main theorems.}
	\label{fig:roadmap}
\end{figure}
\section{\bf{Proofs of the main results}}\label{Sec-4}
\begin{proof}[\bf Proof of Theorem \ref{Th1}]
By the given condition $f$ is holomorphic in the polydisk $\mathbb{P}\Delta\left(a(\gamma);r(\gamma)\right)$ such that $|f(z)|\leq 1$ for all $z\in \mathbb{P}\Delta\left(a(\gamma);r(\gamma)\right)$. We consider the holomorphic function $\phi: \mathbb{P}\Delta(0_n;1_n)\to \mathbb{P}\Delta\left(a(\gamma);r(\gamma)\right)$ defined by \eqref{Eq-L4.2}. Then the function $g=f\circ \phi$ is holomorphic in  $\mathbb{P}\Delta(0_n;1_n)$ and 

\begin{align*}
g(z)=(f\circ \phi)(z)=\sum\limits_{m=0}^{\infty}\sum\limits_{|\alpha|=m}\frac{a_{\alpha}}{(1-\gamma)^{|\alpha|}}(z-\gamma_n)^{\alpha}
\end{align*}
for $z\in \mathbb{P}\Delta\left(\gamma_n;s\right)$,  where  $s=(1-|\gamma|,1-|\gamma|,\ldots, 1-|\gamma|)$. Now applying Lemma \ref{Lem3} for the function $g$, we get
\begin{align*}
\sum\limits_{m=0}^{\infty}\sum\limits_{|\alpha|=m}\left|\frac{a_{\alpha}}{(1-\gamma)^{\mid\alpha\mid}}\right|\rho^{\alpha}+\dfrac{8}{9}\sum\limits_{m=1}^{\infty}m\sum\limits_{|\alpha|=m}\left|\frac{a_{\alpha}}{(1-\gamma)^{\mid\alpha\mid}}\right|^2\rho^{2\alpha}\leq 1\quad\;\text{for}\;n\|\rho\|_{\infty}\leq \dfrac{1-\gamma^2}{3+\gamma},
\end{align*}
where $\rho=(\rho_1,\rho_2,\ldots,\rho_n)$. Equivalently

\begin{align}\label{Th-1.1}
\sum\limits_{m=0}^{\infty}\sum\limits_{|\alpha|=m}\frac{|a_{\alpha}|}{\left(1-\gamma\right)^{\mid\alpha\mid}}\rho^{\alpha}+\dfrac{8}{9}\sum\limits_{m=1}^{\infty}m\sum\limits_{|\alpha|=m} \frac{|a_{\alpha}|^2}{\left(1-\gamma\right)^{2\mid\alpha\mid}}\rho^{2\alpha}\leq 1\quad\;\text{for}\;\frac{n\|\rho\|_{\infty}}{1-\gamma}\leq \dfrac{1+\gamma}{3+\gamma}.
\end{align}	
Choose $\rho=((1-\gamma)|z_1|,(1-\gamma)|z_2|,\ldots, (1-\gamma)|z_n|)=((1-\gamma)r_1,(1-\gamma)r_2,\ldots, (1-\gamma)r_n)$. Then \eqref{Th-1.1} gives
\begin{align*}
	\sum\limits_{m=0}^{\infty}\sum\limits_{|\alpha|=m}|a_{\alpha}|r^{\alpha}+\dfrac{8}{9}\sum\limits_{m=1}^{\infty}m\sum\limits_{|\alpha|=m}|a_{\alpha}|^2 r^{2\alpha}\leq 1\quad\;\text{for}\;n\|z\|_{\infty}\leq \dfrac{1+\gamma}{3+\gamma}.
\end{align*}

\medskip
To prove the sharpness of the result, we consider the functions
$G:\mathbb{P}\Delta\left(a(\gamma);r(\gamma)\right)\to \mathbb{P}\Delta(0_n;1_n)$ and $\psi:\mathbb{P}\Delta(0_n;1_n)\to\mathbb{D}$ defined respectively by
\begin{align*}
G(z)=((1-\gamma)z_1+\gamma,(1-\gamma)z_2+\gamma,\ldots,(1-\gamma)z_n+\gamma)
\end{align*}
and
\begin{align*}\psi(z)=\frac{a-(z_1+z_2+\ldots+z_n)/n}{1-a(z_1+z_2+\ldots+z_n)/n}
\end{align*} 
 for $a\in(0,1)$. Then the function $g_0=\psi\circ G:\mathbb{P}\Delta\left(a(\gamma);r(\gamma)\right)\to \mathbb{D}$ takes the form
 \begin{align*}
 g_0(z)=&\frac{a-\gamma-(1-\gamma)(z_1+z_2+\ldots+z_n)/n}{1-a\gamma-a(1-\gamma)(z_1+z_2+\ldots+z_n)/n}\\=&A_0-\sum\limits_{m=1}^{\infty}A_m\left(\frac{z_1+z_2+\ldots+z_n}{n}\right)^m,
\end{align*}
where 
\begin{align}\label{Th-1.2}
A_0=\frac{a-\gamma}{1-a\gamma}\;\;\text{and}\;\;A_{m}=\frac{1-a^2}{a(1-a\gamma)}\left(\frac{a(1-\gamma)}{1-a\gamma}\right)^m.
\end{align}

Now for the point $z=(\tilde z,\tilde z,\ldots,\tilde z)$, we have 
\begin{align*}
g_0(z)=A_0-\sum\limits_{m=1}^{\infty}A_m\tilde z^m.
\end{align*}
 Suppose $a>\gamma$. For $\tilde z=nr$, provided $\left|nr+\frac{\gamma}{1-\gamma}\right|<\frac{1}{1-\gamma}$, we find that
\begin{align*}
&\sum\limits_{m=0}^{\infty}\sum\limits_{|\alpha|=m}|a_{\alpha}|(nr)^{\alpha}+\dfrac{8}{9}\sum\limits_{m=1}^{\infty}m\sum\limits_{|\alpha|=m}|a_{\alpha}|^2 (nr)^{2\alpha}\\=&
A_0+\sum\limits_{m=1}^{\infty}A_m (nr)^m+\dfrac{8}{9}\sum\limits_{m=1}^{\infty}mA^2_{m}(nr)^{2m}\\=&
\frac{a-\gamma}{1-a\gamma}+\frac{1-a^2}{1-a\gamma}\frac{(1-\gamma)(nr)}{1-a\gamma-a(1-\gamma)(nr)}+\frac{8}{9}\frac{(1-a^2)^2(1-\gamma)^4(nr)^2}{\left((1-a\gamma)^2-a^2(nr)^2(1-\gamma)^4\right)^2}\\=&1-(1-a)\Phi(nr),
\end{align*}	
where
\begin{align*}
\Phi(nr)=\frac{1+\gamma}{1-a\gamma}-\frac{1+a}{1-a\gamma}\frac{(1-\gamma)(nr)}{1-a\gamma-a(1-\gamma)(nr)}+\frac{8}{9}\frac{(1-a)(1+a)^2(1-\gamma)^4(nr)^2}{\left((1-a\gamma)^2-a^2(nr)^2(1-\gamma)^4\right)^2}.
\end{align*}
\begin{figure}[H]
	\centering
	\begin{tikzpicture}
		
	\begin{axis}[
		width=8cm,
		height=5cm,
		xmin=0,
		xmax=1.1,
		ymin=0,
		ymax=1.2,
		axis lines=left,
		thick,
		grid=both,
		minor tick num=3,
		major grid style={gray!20},
		minor grid style={gray!20},
		grid style={gray!30},
		xlabel={$t=nr$},
		ylabel={$\Phi(t)$},
		samples=200,
		domain=0:0.95,
		xtick={0,0.5,1},
		xticklabels={$0$,$\frac{1+\gamma}{3+\gamma}$,$1$},
		tick style={thick},
		legend style={
			draw=none,
			fill=none,
			at={(0.98,0.98)},
			anchor=north east
		}
		]
			
		\addplot[
		very thick,
		blue
		]
		{1-x};
		
		\draw[
		red,
		thick,
		dashed
		]
		(axis cs:0.5,-0.4)
		--
		(axis cs:0.5,1.1);
		
		\addplot[
		only marks,
		mark=*,
		mark size=2.5pt,
		red
		]
		coordinates {(0.5,0.5)};
		
		\end{axis}

	\end{tikzpicture}
	
	\caption{Typical qualitative behaviour of the decreasing function
		$\Phi(t)$.}
\end{figure}

Clearly, $\Phi(nr)$ is a strictly decreasing function of $nr$ in $(0,1)$ and so for $nr>nr_0=(1+\gamma)/(3+\gamma)$, we have 
\begin{align*}
\Phi(nr)<\Phi(nr_0)=&\frac{1+\gamma}{1-a\gamma}-\frac{1+a}{1-a\gamma}\;\frac{(1-\gamma^2)}{(1-a\gamma)(3+\gamma)-a(1-\gamma^2)}\\+&\frac{8}{9}(1-a)\frac{(1+a)^2(1-\gamma)^4(1+\gamma)^2(3+\gamma)^2}{\left((1-a\gamma)^2(3+\gamma)^2-a^2(1+\gamma)^2(1-\gamma)^4\right)^2}\to 0
\end{align*}
as $a\to 1$. Consequently $\Phi(nr)$ is negative for $nr>nr_0=(1+\gamma)/(3+\gamma)$ and so $1-(1-a)\Phi(nr)>1$. This shows that the bound $8/9$ and the number $(1+\gamma)/(n(3+\gamma))$ cannot be replaced by a larger quantity.
\end{proof}

\begin{proof}[\bf Proof of Theorem \ref{Th2}]
Without loss of generality, we may assume that $a_0:=a\in (0, 1)$. By using Lemma \ref{Lem4}, we obtain
\begin{align}\label{Th-2.0}
\mathcal{M}_f(nr)=&\sum\limits_{m=0}^{\infty}\sum\limits_{|\alpha|=m}|a_{\alpha}|r^{\alpha}+\left(\frac{1}{1+|a_0|}+\frac{\bf{r}}{1-\bf{r}}\right)\sum\limits_{m=0}^{\infty}\sum\limits_{|\alpha|=m}|a_{\alpha}|^2r^{2\alpha}\\\leq &
a+\frac{1-a^2}{1+\gamma}\sum\limits_{m=1}^{\infty} (n{\bf{r}})^m+
\left(\frac{1}{1+|a_0|}+\frac{n\bf{r}}{1-n\bf{r}}\right)\left(\frac{1-a^2}{1+\gamma}\right)^2\sum\limits_{m=1}^{\infty}(n{\bf{r}})^{2m}\nonumber\\=&
a+\frac{1-a^2}{1+\gamma}\frac{n\bf{r}}{1-n\bf{r}}+
\left(\frac{1}{1+|a_0|}+\frac{n\bf{r}}{1-n\bf{r}}\right)\left(\frac{1-a^2}{1+\gamma}\right)^2\frac{(n\bf{r})^2}{1-(n\bf{r})^2}\nonumber\\=&U(a)\nonumber\\=&
a+A(1-a^2)+B(1-a)(1-a^2)+C(1-a^2)^2,\nonumber
\end{align} 
where
\begin{align*}
A=\frac{1}{(1+\gamma)}\frac{n\bf{r}}{1-n{\bf{r}}},\;\;B=\frac{1}{(1+\gamma)^2}\;\frac{(n{\bf{r}})^2}{1-(n\bf{r})^2}\;\;\mbox{and}\;\;C=\frac{1}{(1+\gamma)^2} \frac{(n{\bf{r}})^3}{(1-n{\bf{r}})(1-(n{\bf{r}})^2)}.
\end{align*}

Now for $a\in[0,1]$, we see that
\begin{align*}
U'(a)=1-2aA+B(3a^2-2a-1)+4C(a^3-a),
\end{align*}
\begin{align*}
U''(a)=-2A+2B(3a-1)+4C(3a^2-1)\;\;\mbox{and}\;\;U'''(a)=6B+24aC.
\end{align*}
\begin{table}[H]
	\centering
	\caption{Summary of the monotonicity argument.}\vspace{.5cc}
	\renewcommand{\arraystretch}{1.2}
	\begin{tabular}{|c|c|c|c|}
		\hline
		Quantity & Sign & Consequence & Result\\
		\hline
		$U'''(a)$ & $\ge0$ &
		$U''(a)$ increasing &
		$U''(a)\le U''(1)$\\
		\hline
		$U''(a)$ & $\le0$ &
		$U'(a)$ decreasing &
		$U'(a)\ge U'(1)$\\
		\hline
		$U'(a)$ & $\ge0$ &
		$U(a)$ increasing &
		$U(a)\le U(1)=1$\\
		\hline
	\end{tabular}
	\label{tab:monotonicity}
\end{table}
Since $B\geq 0$ and $C\geq 0$, it follows that $U'''(a)\geq 0$ and so $U''(a)$ is an increasing function of $a$ in $[0,1]$. Consequently 
\begin{align}\label{Th-2.1}
U''(a)\leq U''(1)=-2A+4B+8C=\frac{2(n\bf{r})}{(1+\gamma)^2(1-n\bf{r})(1-(n\bf{r})^2)}\Psi(n{\bf{r}}),
\end{align}
where
\begin{align*}
\Psi({n{\bf{r}}})=4({n\bf{r}})^2+2({n\bf{r}})(1-{n\bf{r}})-(1+\gamma)(1-({n\bf{r}})^2)=(1+{n\bf{r}})({n\bf{r}}(3+\gamma)-(1+\gamma)).
\end{align*}

For $n{\bf{r}}<n{\bf{r}_0}=(1+\gamma)/(3+\gamma)$, we see that $\Psi({n{\bf{r}}})\leq 0$ and so from \eqref{Th-2.1}, we have $U''(a)\leq 0$ in $[0,1]$ and so $U'(a)$ is a decreasing function of $a$ in $[0,1]$. Therefore 
\begin{align*}
U'(a)\geq U'(1)=1-2A=\frac{1+\gamma-{n\bf{r}}(3+\gamma)}{(1+\gamma)(1-{n{\bf{r}}})}.
\end{align*}

For $n{\bf{r}}<n{\bf{r}_0}=(1+\gamma)/(3+\gamma)$, we see that $U'(a)\geq 0$ in $[0,1]$ and so $U(a)$ is an increasing function of $a$ in $[0,1]$. Consequently $U(a)\leq U(1)=1$ and so from \eqref{Th-2.0}, we have $\mathcal{M}_f({n\bf{r}})\leq 1$ for $n{\bf{r}}<n{\bf{r}_0}=(1+\gamma)/(3+\gamma)$.
\begin{figure}[H]
	\centering
	\begin{tikzpicture}[scale=0.9]
		
		\draw[->]
		(-0.2,0)--(5.8,0)
		node[right]{$nr$};
		
		\draw[->]
		(0,-0.2)--(0,3.3)
		node[above]{$\mathcal M_{g_0}(nr)$};
		
		\draw[very thick,blue]
		plot[smooth]
		coordinates{
			(0.3,0.8)
			(1.0,0.95)
			(2.2,1.0)
			(3.3,1.25)
			(4.8,2.4)
		};
		
		\draw[dashed]
		(2.2,0)--(2.2,1);
		
		\node[below]
		at (2.2,0)
		{$nr_0$};
		
		\draw[dashed]
		(0,1)--(5.5,1);
		
		\node[left]
		at (0,1)
		{$1$};
		
	\end{tikzpicture}
	
	\caption{Sharpness: $\mathcal M_{g_0}(nr)$ exceeds $1$
		whenever $nr>\dfrac{1+\gamma}{3+\gamma}$.}
\end{figure}
\medskip
For the sharpness of the radius, we consider the function
\begin{align*}
	g_0(z)=&\frac{a-\gamma-(1-\gamma)(z_1+z_2+\ldots+z_n)/n}{1-a\gamma-a(1-\gamma)(z_1+z_2+\ldots+z_n)/n}\\=&A_0-\sum\limits_{m=1}^{\infty}A_m\left(\frac{z_1+z_2+\ldots+z_n}{n}\right)^m,
\end{align*}
where $A_m(m\geq 0)$ are given by \eqref{Th-1.2}.

Now for the point $z=(\tilde z,\tilde z,\ldots,\tilde z)$, we have 
\begin{align*}
	g_0(z)=A_0-\sum\limits_{m=1}^{\infty}A_m\tilde z^m.
\end{align*}
Suppose $a>\gamma$. For $\tilde z=nr$, where $\left|nr+\frac{\gamma}{1-\gamma}\right|<\frac{1}{1-\gamma}$, we find that
\begin{align*}
\mathcal{M}_{g_0}(nr)=&A_0+\sum\limits_{m=1}^{\infty}A_m(nr)^m+\left(\frac{1}{1+|A_0|}+\frac{nr}{1-nr}\right)\sum\limits_{m=1}^{\infty}A^2_m(nr)^{2m}\\=&
\frac{a-\gamma}{1-a\gamma}+\frac{1-a^2}{a(1-a\gamma)}\sum\limits_{m=1}^{\infty} \left(\frac{a(1-\gamma)}{1-a\gamma}\right)^m (nr)^m\\&+\left(\frac{1-a\gamma}{(1+a)(1-\gamma)}+\frac{nr}{1-nr}\right)\left(\frac{1-a^2}{a(1-a\gamma)}\right)^2\sum\limits_{m=1}^{\infty}\left(\frac{a(1-\gamma)}{1-a\gamma}\right)^{2m}(nr)^{2m}\\=&
\frac{a-\gamma}{1-a\gamma}+\frac{1-a^2}{1-a\gamma}\frac{(1-\gamma)nr}{1-a\gamma-a(1-\gamma)nr}\\&+\left(\frac{1-a\gamma}{(1+a)(1-\gamma)}+\frac{nr}{1-nr}\right)\left(\frac{1-a^2}{1-a\gamma}\right)^2\frac{(1-\gamma)^2n^2r^2}{(1-a\gamma)^2-a^2(1-\gamma)^2n^2r^2}
\\=&1-\frac{1-a}{1-a\gamma}\Phi(nr),
\end{align*}
where
\begin{align}\label{Th-2.2}
\Phi(nr)=&1+\gamma-\frac{(1+a)(1-\gamma)nr}{1-a\gamma-a(1-\gamma)nr}\\&-\left(\frac{1-a\gamma}{(1+a)(1-\gamma)}+\frac{nr}{1-nr}\right)\frac{(1+a)(1-a^2)}{1-a\gamma}\frac{(1-\gamma)^2n^2r^2}{(1-a\gamma)^2-a^2(1-\gamma)^2n^2r^2}.\nonumber
\end{align}

Note that 
\begin{align*}
\lim\limits_{a\to 1}\left(1+\gamma-\frac{(1+a)(1-\gamma)nr}{1-a\gamma-a(1-\gamma)nr}\right)=1+\gamma-\frac{2nr}{1-nr}<0
\end{align*}
if $n{\bf{r}}>n{\bf{r}_0}=(1+\gamma)/(3+\gamma)$. Consequently from \eqref{Th-2.2}, we conclude that $\Phi(nr)$ is negative for $n{\bf{r}}>n{\bf{r}_0}=(1+\gamma)/(3+\gamma)$ as $a$ tends to $1$ and hence $\mathcal{M}_{g_0}(nr)>1$ for $n{\bf{r}}>n{\bf{r}_0}=(1+\gamma)/(3+\gamma)$. This shows that the number $(1+\gamma)/(n(3+\gamma))$ cannot be replaced by a larger quantity.
\end{proof}

\begin{proof}[\bf Proof of Theorem \ref{Th3}]
By the definition of $\lambda_n$ given by (\ref{T.1}), we have
\begin{align}\label{Th-3.1}
|a_{\alpha}|\leq \lambda_n (1-|a_0|^2)
\end{align}
for multi-index $\alpha=(\alpha_1,\alpha_2,\ldots,\alpha_n)$ such that $|\alpha|\geq 1$.
Now using \eqref{Th-3.1}, we find that
\begin{align*}
\mathcal{B}_f({n\bf{r}})=&\sum\limits_{m=0}^{\infty}\sum\limits_{|\alpha|=m}|a_{\alpha}|r^{\alpha}+2\left(\frac{1+\lambda_n}{1+2\lambda_n}\right)^2\sum\limits_{m=1}^{\infty}m\sum\limits_{|\alpha|=m}|a_{\alpha}|^2r^{2\alpha}\\\leq&
|a_0|+\lambda_n (1-|a_0|^2)\frac{n\bf{r}}{1-n\bf{r}}+2\left(\frac{1+\lambda_n}{1+2\lambda_n}\right)^2\lambda_n^2(1-|a_0|^2)^2\frac{(n\bf{r})^2}{(1-(n\bf{r})^2)^2}.
\end{align*}

Now for ${n\bf{r}}<\frac{1}{1+2\lambda_n}$, we have
\begin{align*}
\mathcal{B}_f({n\bf{r}})\leq&
|a_0|+\lambda_n (1-|a_0|^2)\frac{1/(1+2\lambda_n)}{1-1/(1+2\lambda_n)}\\&+
2\left(\frac{1+\lambda_n}{1+2\lambda_n}\right)^2\lambda_n^2(1-|a_0|^2)^2\frac{\left(1/(1+2\lambda_n)\right)^2}{\left(1-\left(1/(1+2\lambda_n)\right)^2\right)^2}\\=&
1-(1-|a_0|)\left(1-\frac{1+|a_0|}{2}-\frac{(1+|a_0|)(1-|a_0|^2)}{8}\right)\\=&
1-\frac{1-|a_0|^2}{8}F(|a_0|),
\end{align*}
where
\begin{align*}
F(x)=\frac{8}{1+x}-5+x^2\;\;\text{for}\;\;x\in [0,1].
\end{align*}
\begin{figure}[H]
	\centering
	\begin{tikzpicture}[scale=2.3]

		\draw[->,thick] (-0.03,0)--(1.08,0)
		node[right,font=\scriptsize]{$x$};
		
		\draw[->,thick] (0,-0.05)--(0,3.25)
		node[above,font=\scriptsize]{$F(x)$};

		\draw[
		blue,
		very thick,
		domain=0:1,
		samples=250,
		smooth
		]
		plot(\x,{8/(1+\x)-5+\x*\x});

		\fill[red] (0,3) circle(0.45pt);
		\fill[red] (1,0) circle(0.45pt);

		\draw[dashed,gray] (0,3)--(1,3);
		\draw[dashed,gray] (1,0)--(1,3);

		\node[left,font=\scriptsize] at (0,3){$3$};
		\node[below,font=\scriptsize] at (1,0){$1$};
		\node[below left,font=\scriptsize] at (0,0){$0$};

		\node[blue,font=\scriptsize] at (0.48,1.65){$F(x)$};
		
	\end{tikzpicture}
	\caption{Graph of $F(x)=\dfrac{8}{1+x}-5+x^2$ on $[0,1]$.}
	\label{fig:F}
\end{figure}

We see that $F(1)=0$ and $F'(x)\leq 0$ in $[0,1]$. Consequently $F(x)$ is a decreasing function of $x$ in $[0,1]$ and so $F(x)\geq F(1)=0$ in $[0,1]$. Hence, $\mathcal{B}_f({n\bf{r}})\leq 1$ for ${n\bf{r}}<\frac{1}{1+2\lambda_n}$.
\end{proof}

\section{\bf{Bohr phenomenon for Pluriharmonic mapping in shifted polydosk}}\label{Sec-5}
The connection between pluriharmonic mappings and the Bohr radius represents a highly active extension of classical complex analysis into higher dimensions (multivariable complex analysis) and functional analysis. In modern geometric function theory, studying the pluriharmonic mapping in the view of the Bohr radius means determining how the addition of an anti-holomorphic component $\overline{g(z)}$ and the geometric expansion into higher-dimensional spaces distort the classical $1/3$ boundary. Rather than a single fixed number, it manifests as a framework of Bohr-type inequalities heavily influenced by operator theory and Banach space geometry.

\subsection{\bf{Basic ideas of pluriharmonic mapping in $\Omega_n$}}
We write $z_j = x_j + i y_j\;(i^2 = -1,\; j = 1,\dots,n)$, where $x_j$ and $y_j$ are real numbers. We set
$f(z) = u(x,y) + i v(x,y)$, where $u(x,y)$ and $v(x,y)$ are the real and imaginary parts of $f(z)$; $x = (x_1,\dots,x_n)$ and $y = (y_1,\dots,y_n)$.
The Cauchy--Riemann equations for each $z_j\;(j=1,\dots,n)$ are
\begin{align}\label{Eq 1.1}
\frac{\partial u}{\partial x_j}
=
\frac{\partial v}{\partial y_j}\quad \text{and}\quad
\frac{\partial u}{\partial y_j}
=
-\,\frac{\partial v}{\partial x_j}
\qquad (j=1,\dots,n).
\end{align}

By differentiating (\ref{Eq 1.1}) with respect to $x_k$ and $y_k$, we see that both $u$ and $v$ satisfy the
following system of partial differential equations of second order:
\begin{align}\label{Eq 1.2}
\frac{\partial^2}{\partial x_j \partial x_k}
+
\frac{\partial^2}{\partial y_j \partial y_k}
= 0
\quad \text{and} \quad
\frac{\partial^2}{\partial x_j \partial y_k}
-
\frac{\partial^2}{\partial x_k \partial y_j}
= 0
\qquad (j,k = 1,\dots,n).
\end{align}

For a complex variable $z_j = x_j + i y_j$, we define
\begin{align}\label{Eq 1.3}
\frac{\partial}{\partial z_j}
= \frac{1}{2}\left( \frac{\partial}{\partial x_j}
- i \frac{\partial}{\partial y_j} \right)\;\;\text{and}\;\;
\frac{\partial}{\partial \bar z_j}
= \frac{1}{2}\left( \frac{\partial}{\partial x_j}
+ i \frac{\partial}{\partial y_j} \right).
\end{align}

We know that a function $f$ defined on an open subset $U\subset \mathbb{R}^n$ is said to be of $C^k$-class if $f$ is $k$-times continuously differentiable.

Let $f(z) = u(x,y) + i v(x,y)\;(x = (x_1,\dots,x_n), y = (y_1,\dots,y_n))$, where both $u$ and $v$ are of $C^2$-class. A direct calculation on (\ref{Eq 1.3}) shows that 
\begin{align}\label{Eq 1.5}
4\frac{\partial^2 f(z)}{\partial \bar z_j \partial z_k}=&\frac{\partial^2 u(x,y)}{\partial x_j x_k}+\frac{\partial^2 u(x,y)}{\partial y_j y_k}+i\left(\frac{\partial^2 v(x,y)}{\partial x_j x_k}+\frac{\partial^2 v(x,y)}{\partial y_j y_k}\right)\\&-i\left(\frac{\partial^2 u(x,y)}{\partial x_j y_k}-\frac{\partial^2 u(x,y)}{\partial x_k y_j}\right)+\left(\frac{\partial^2 v(x,y)}{\partial x_j y_k}-\frac{\partial^2 v(x,y)}{\partial x_k y_j}\right).\nonumber
\end{align}

A function $f:\Omega_n \to \mathbb{C}$ defined on an open set $\Omega_n\subset \mathbb{C}^n$ is said to be holomorphic if $f$ is of $C^1$-class and satisfies
\begin{align}\label{Eq 1.4}
\frac{\partial f(z)}{\partial \bar z_j}=0\quad \text{on}\;\;\Omega_n\;\;\text{ for all j}.
\end{align}

A real-valued function $\phi(x,y)$, where $x = (x_1,\dots,x_n)$ and $y = (y_1,\dots,y_n)$ is \emph{pluriharmonic} if it satisfies the conditions  (\ref{Eq 1.2}). Thus a continuous complex-valued function $f(z)=u(x,y)+iv(x,y)$, where
$x = (x_1,\dots,x_n)$ and $y = (y_1,\dots,y_n)$ is a complex-valued \emph{pluriharmonic} function in a domain $\Omega_n\subset \mathbb{C}^n$, if both $u(x,y)$ and $v(x,y)$ are real-valued \emph{pluriharmonic} functions in $\Omega_n$.
If $u(x,y)$ and $v(x,y)$, where $x = (x_1,\dots,x_n)$ and $y = (y_1,\dots,y_n)$ satisfy (\ref{Eq 1.1}), then we call $v(x,y)$, a \emph{pluriharmonic conjugate} of $u(x,y)$.

\par Thus for functions $f(z)=u(x,y)+iv(x,y)$, where
$x = (x_1,\dots,x_n)$ and $y = (y_1,\dots,y_n)$ with continuous second order partial derivatives, it is clear from (\ref{Eq 1.5}) and (\ref{Eq 1.4}) that $\frac{\partial f(z)}{\partial z_j}$ is holomorphic on $\Omega$ for all $j$ if  $f(z)$ is pluriharmonic function.
\vspace{1.5mm}
\par In a simply connected domain $\Omega_n\subset \mathbb{C}^n$, let $f(z)$ be a complex-valued pluriharmonic function. We recall that $\frac{\partial f(z)}{\partial z_j}$ is holomorphic on $\Omega_n$ for all $j=1,2,\ldots,n$ if $f(z)$ is pluriharmonic and let $\frac{\partial h(z)}{\partial z_j}=\frac{\partial f(z)}{\partial z_j}\;(j=1,2,\ldots,n)$, where $h(z)$ is holomorphic in $\Omega_n$. Now let $g(z)=\ol {f(z)}-\ol {h(z)}$ and we observe that
\[\frac{\partial g(z)}{\partial \ol{z_j}}=\ol{\frac{\partial f(z)}{\partial z_j}}-\ol{\frac{\partial h(z)}{\partial z_j}}=0\quad \text{in}\;\Omega,\quad j=1,2,\ldots,n\]
by the definition of $h$. Thus $g(z)$ is holomorphic in $\Omega_n$. Therefore the pluriharmonic function $f(z)$ has the representation $f(z)=h(z)+\ol{g(z)}$, where $h(z)$ and $g(z)$ are holomorphic in $\Omega_n$. \vspace{1.2mm}

\begin{defi} Let $f\in\mathcal{B}(\Omega_n)$. The total derivative $D(f)$ of $f$ is defined by
\begin{align*}
D(f(z))=\sum\limits_{j=1}^nz_j\frac{\partial f(z)}{\partial z_j}.
\end{align*}
\end{defi}
\medskip

\subsection{\bf {Preliminary results}}
Harmonic version of Bohr's inequality was discussed by Kayumov et al. in \cite{Kayumov-Ponnusamy-Shakirov-2018}. In a related development, Kayumov and Ponnusamy \cite{Kayumov-2018} gave several improved versions of Bohr's inequality. For generalizations of the theorem of Bohr to harmonic functions and pluriharmonic mappings, see, for example, \cite{Chen-Hamada-Ponnusamy-Vijayakumar-JAM-2024, Hamada-AAMP-2025, Kayumov-Ponnusamy-2018, Kayumov-Ponnusamy-AASFM-2019, Liu-Ponnusamy-2023, Muhanna-2010} and the references therein.\vspace{1.2mm}

In the following, Evdoridis et al. \cite{Evdoridis-2021} considered a harmonic mapping in $\Omega_{\gamma}$ and obtained Bohr's inequality for its restriction to the unit disk $\mathbb{D}$.

\begin{theoF}\emph{\cite[Theorem 4]{Evdoridis-2021}} Let $f=h+\ol g$ be a harmonic mapping in $\Omega_{\gamma}$ with $|h(z)|\leq 1$ on $\Omega_{\gamma}$. If $h(z)=\sum_{m=0}^{\infty} a_mz^m$ and $g(z)=\sum_{m=1}^{\infty} b_mz^m$ in $\mathbb{D}$ and $|g'(z)|\leq k|h'(z)|$ for some $k\in [0,1]$, then
\begin{align*}
\sum\limits_{m=0}^{\infty}|a_{m}|r^m+\sum\limits_{m=1}^{\infty}|b_{m}|r^m\leq 1\;\;\text{for}\;\;r\leq r_0:=\frac{1+\gamma}{3+2k+\gamma}.
\end{align*}
The radius $r_0$ is the best possible.
\end{theoF}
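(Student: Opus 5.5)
The plan is to reduce Theorem F to the analytic Bohr radius argument behind Theorem B, using sharp coefficient bounds for $h$ and a transfer of those bounds to $g$ via the dilatation condition.

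\textbf{Step 1: bounds for $h$.} Since $h\in\mathcal{B}(\Omega_\gamma)$, the one-variable case of Lemma \ref{Lem4} applies (compose with the affine map $\phi(z)=(z-\gamma)/(1-\gamma)$ and use Schwarz--Pick at $\gamma$). It gives
\[
|a_m|\le \frac{1-|a_0|^2}{1+\gamma}\qquad (m\ge 1).
\]
I will not sum these termwise bounds directly. Instead I want the stronger $\ell^2$-type control
\[
\sum_{m\ge1} m^2|a_m|^2\rho^{2m}\le \frac{(1-|a_0|^2)^2}{(1+\gamma)^2}\cdot\frac{\rho^2}{(1-\rho^2)^2}\cdot(\text{const}).
\]

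\textbf{Step 2: transfer to $g$ by subordination.} The hypothesis $|g'|\le k|h'|$ on $\mathbb{D}$ means $g'=\omega h'$ with $|\omega|\le k$. By the Rogosinski/Parseval-type lemma used by Kayumov--Ponnusamy, for $|z|=\rho$ we get
\[
\sum_{m\ge1} m^2|b_m|^2\rho^{2m}\le k^2\sum_{m\ge1} m^2|a_m|^2\rho^{2m}.
\]
Then Cauchy--Schwarz gives
\[
\sum_{m\ge1}|b_m|r^m\le\Big(\sum_{m\ge1} m^2|b_m|^2\rho^{2m}\Big)^{1/2}\Big(\sum_{m\ge1}\frac{r^{2m}}{m^2\rho^{2m}}\Big)^{1/2}.
\]

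\textbf{Main obstacle.} This Cauchy--Schwarz route introduces constants that spoil sharpness. So my first attempt will be the cleaner route used for Theorem F. Write
\[
h(z)=a_0+\frac{1-|a_0|^2}{1+\gamma}\,\Phi(z),
\]
observe that each coefficient of $g$ is controlled by $k$ times the coefficient bound of $h$, and aim for the termwise estimate
\[
|b_m|\le k\,\frac{1-|a_0|^2}{1+\gamma}.
\]
This estimate is the crux. I expect to prove it by applying Lemma \ref{Lem4}-type reasoning to $\epsilon h+g$ for $|\epsilon|=1$. The sum $\epsilon h+g$ is analytic, though not bounded by $1$ a priori. To handle that, I will instead use the harmonic Schwarz--Pick coefficient estimate for sense-preserving harmonic mappings with $h$ bounded, transplanted to $\Omega_\gamma$ via $\phi$.

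\textbf{Step 3: summation.} Granting the estimate above,
\[
\sum_{m\ge0}|a_m|r^m+\sum_{m\ge1}|b_m|r^m\le |a_0|+(1+k)\frac{1-|a_0|^2}{1+\gamma}\cdot\frac{r}{1-r}.
\]
The right side is at most $1$ precisely when
\[
(1+k)(1+|a_0|)\,r\le (1+\gamma)(1-r).
\]
The worst case is $|a_0|\to1$, which gives
\[
r\le \frac{1+\gamma}{3+2k+\gamma}.
\]

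\textbf{Step 4: sharpness.} Take $h=g_0$, the shifted Möbius function from the proof of Theorem \ref{Th1} with $n=1$, and $g=k\lambda(h-a_0)$ with $|\lambda|=1$ chosen so that the coefficient signs align. Then $|g'|=k|h'|$. Expanding at $r>r_0$ and letting $a\to1$ gives
\[
1-(1-a)\Phi(r)\quad\text{with}\quad \Phi(r)<0,
\]
exactly as in the proof of Theorem \ref{Th2}. This shows the radius cannot be enlarged.
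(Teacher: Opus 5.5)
There is a genuine gap. Your Steps 3 and 4 rest on the termwise estimate $|b_m|\le k\,\frac{1-|a_0|^2}{1+\gamma}$, and you never prove it. Neither suggested route delivers it:
\begin{itemize}
\item $\epsilon h+g$ is not bounded, so no Schwarz--Pick or Wiener-type argument applies to it.
\item You do not identify a ``harmonic Schwarz--Pick coefficient estimate'' that works under only $|h|\le 1$ and $|g'|\le k|h'|$. The harmonic coefficient bounds one usually has in mind assume $|f|\le 1$ and have a different form.
\item Transplanting via $\phi$ and expanding about $\gamma$ would need $|g'|\le k|h'|$ on all of $\Omega_\gamma$, which is more than the argument requires.
\end{itemize}
As written, the upper bound $\sum_{m\ge0}|a_m|r^m+\sum_{m\ge1}|b_m|r^m\le 1$ is therefore not established.

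What you missed is that the Cauchy--Schwarz route you abandoned loses nothing if you weight it correctly. The paper does exactly this: its proof of Theorem \ref{Th4} via Lemma \ref{Lem6} and \eqref{Th-4.4} contains Theorem F as the case $n=1$.

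Start from your Parseval inequality with $u=\rho^2$, namely $\sum_{m\ge1} m^2|b_m|^2u^{m-1}\le k^2\sum_{m\ge1} m^2|a_m|^2u^{m-1}$. This only uses $|g'|\le k|h'|$ on $\mathbb{D}$. Integrate in $u$, divide by $u$, and integrate again. This gives $\sum_{m\ge1}|b_m|^2r^m\le k^2\sum_{m\ge1}|a_m|^2r^m$.

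Then pair with $\sum r^m$, not with $r^{2m}/(m^2\rho^{2m})$, and use your Step 1 bound inside the $\ell^2$ sum:
\[
\sum_{m\ge1}|b_m|r^m\le\Big(\sum_{m\ge1}|b_m|^2r^m\Big)^{1/2}\Big(\sum_{m\ge1}r^m\Big)^{1/2}\le k\Big(\sum_{m\ge1}|a_m|^2r^m\Big)^{1/2}\Big(\frac{r}{1-r}\Big)^{1/2}\le k\,\frac{1-|a_0|^2}{1+\gamma}\,\frac{r}{1-r}.
\]
This is exactly the bound your termwise estimate would have produced. With it, your Step 3 computation (worst case $|a_0|\to 1$) and your sharpness example $g=k\lambda(h-a_0)$ both go through unchanged, and both match the paper's.
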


\subsection{\bf {Main result}} We state our result, which is a multidimensional version of Theorem F.
\begin{theo}\label{Th4} Let $f=h+\ol g$ be a pluriharmonic mapping in $\mathbb{P}\Delta(a(\gamma);r(\gamma))$ with $|h(z)|\leq 1$ on $\mathbb{P}\Delta(a(\gamma);r(\gamma))$. If 
\begin{align*}
h(z)=\sum\limits_{m=0}^{\infty}\sum\limits_{|\alpha|=m} a_{\alpha}z^{\alpha}\;\;\text{and}\;\; g(z)=\sum\limits_{m=1}^{\infty}\sum\limits_{|\alpha|=m} b_{\alpha}z^{\alpha}
\end{align*}
 in $\mathbb{P}\Delta\left(0_n;1_n\right)$ and $|D(g(z))|\leq k|D(h(z))|$ for some $k\in [0,1]$, then
\begin{align*}
\mathcal{N}_f(nr):=\sum\limits_{m=0}^{\infty}\sum\limits_{|\alpha|=m}|a_{\alpha}|r^{\alpha}+\sum\limits_{m=1}^{\infty}\sum\limits_{|\alpha|=m}|b_{\alpha}|r^{\alpha}\leq 1\;\;\text{for}\;\;n{\bf{r}}\leq n{\bf{r}_0}:=\frac{1+\gamma}{3+2k+\gamma}.
\end{align*}
The radius $\frac{1+\gamma}{n(3+2k+\gamma)}$ is the best possible.
\end{theo}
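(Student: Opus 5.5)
The plan is to reduce everything to coefficient estimates of the form already obtained in Lemma \ref{Lem4}, and then run the same one-variable majorant computation as in the proof of Theorem \ref{Th2}.

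\textbf{Step 1 (bound on $a_\alpha$).} Since $|h|\leq 1$ on $\mathbb{P}\Delta(a(\gamma);r(\gamma))$, Lemma \ref{Lem4} gives
\[
|a_\alpha|\leq \frac{1-|a_0|^2}{1+\gamma}\quad\text{for } |\alpha|\geq 1 .
\]

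\textbf{Step 2 (bound on $b_\alpha$; the main obstacle).} The target is the companion estimate
\[
|b_\alpha|\leq k\,\frac{1-|a_0|^2}{1+\gamma}\quad\text{for } |\alpha|\geq 1 .
\]
I would get it by slicing. Fix $\zeta$ with $\|\zeta\|_\infty\le 1$ and set $H(\lambda)=h(\lambda\zeta)$ and $G(\lambda)=g(\lambda\zeta)$. Then $\lambda H'(\lambda)=D(h)(\lambda\zeta)$ and $\lambda G'(\lambda)=D(g)(\lambda\zeta)$, so the hypothesis $|D(g)|\leq k|D(h)|$ becomes $|G'|\leq k|H'|$. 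This is exactly the one-variable setting of Theorem F. Writing $h=\sum_m P_m$ and $g=\sum_m Q_m$ in homogeneous parts, we have $D(h)=\sum_m mP_m$ and $D(g)=\sum_m mQ_m$. I would transfer the one-variable coefficient bound that underlies Theorem F (namely $|b_m|\leq k(1-|a_0|^2)/(1+\gamma)$) to the slice functions, and then pass from homogeneous polynomials to the individual coefficients $b_\alpha$ by averaging over the torus, in the way Lemma \ref{Lem2} yields Lemma \ref{Lem4}.

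The delicate point is that a slice $\lambda\mapsto\lambda\zeta$ with $\zeta$ not on the diagonal need not map $\Omega_\gamma$ into the shifted polydisk. If the slicing does not work directly, I would fall back to composing with $\phi$ from \eqref{Eq-L4.2}, exactly as in Lemma \ref{Lem4}. I would then try to prove the $b$-estimate at the centre $\gamma_n$ using the representation $g=\omega\ast h$-type, i.e.\ writing $D(g)=\omega\, D(h)$ with $|\omega|\le k$ on the unit polydisk. This is the step I expect to need the most care.

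\textbf{Step 3 (summation).} With both bounds in hand, and using $\sum_{|\alpha|=m}r^\alpha\leq (n\mathbf r)^m$ as in \eqref{Eq-L3.5}, we obtain
\[
\mathcal N_f(nr)\leq |a_0|+(1+k)\frac{1-|a_0|^2}{1+\gamma}\,\frac{n\mathbf r}{1-n\mathbf r}.
\]
This is at most $1$ if and only if
\[
(1+k)(1+|a_0|)\,n\mathbf r\leq (1+\gamma)(1-n\mathbf r).
\]
The left-hand side is worst as $|a_0|\to1$, which gives exactly $n\mathbf r\leq (1+\gamma)/(3+2k+\gamma)$.

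\textbf{Step 4 (sharpness).} Take $h=g_0$ from the proof of Theorem \ref{Th1}, so that $h=A_0-\sum_m A_m\big((z_1+\cdots+z_n)/n\big)^m$, and take $g=k(A_0-h)$. Then $D(g)=-kD(h)$, so the hypothesis holds with equality. At the diagonal point $z=(nr,\ldots,nr)$, in the same normalization as in Theorem \ref{Th1}, the majorant becomes
\[
A_0+(1+k)\sum_{m\ge1}A_m(nr)^m=1-(1-a)\widetilde\Phi(nr).
\]
Letting $a\to1$ gives $\widetilde\Phi(nr)\to 1+\gamma-\dfrac{2(1+k)nr}{1-nr}$, which is negative when $nr>(1+\gamma)/(3+2k+\gamma)$. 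Hence the radius cannot be enlarged.
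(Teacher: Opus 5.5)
\textbf{Step 2 is a genuine gap.} Your Steps 1 and 3 coincide with the paper's argument. Step 2, however, asserts the coefficientwise bound $|b_\alpha|\le k\frac{1-|a_0|^2}{1+\gamma}$ without proving it. There is also no ``one-variable coefficient bound underlying Theorem F'' to transfer: Theorem F is a Bohr inequality, and the argument the paper adapts never bounds individual $b_m$.

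Nor is the bound routine. From $g'=\omega h'$ with $|\omega|\le k$ one has $mb_m=\sum_{j=1}^m ja_j\omega_{m-j}$, and estimating term by term gives only $|b_m|\le k\frac{1-|a_0|^2}{1+\gamma}\cdot\frac{m+1}{2}$. A genuinely bilinear argument would be required.

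The slicing plan also fails where you suspected. The slice $\lambda\mapsto\lambda\zeta$ with $\lambda\in\Omega_\gamma$ stays in $\mathbb{P}\Delta(a(\gamma);r(\gamma))$ for $\zeta\in[0,1]^n$. But isolating $b_\alpha$ from $Q_m$ by torus averaging needs control on $|\zeta_1|=\cdots=|\zeta_n|=1$, and there the gain $1+\gamma$ is lost. Composing with $\phi$ only recentres the same bilinear problem.

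\textbf{The missing idea} is that you never need coefficientwise control of $b_\alpha$. Since $D$ multiplies the degree-$m$ homogeneous part by $m$, Parseval on the torus $|z_j|=\mathbf r$ turns $|D(g)|\le k|D(h)|$ into
\[
\sum_m m^2\sum_{|\alpha|=m}|b_\alpha|^2\mathbf r^{2m}\le k^2\sum_m m^2\sum_{|\alpha|=m}|a_\alpha|^2\mathbf r^{2m}.
\]
Two integrations in the radius then give Lemma \ref{Lem6}. Apply Cauchy--Schwarz against $\sum_{m\ge1}\sum_{|\alpha|=m}\mathbf r^m\le\frac{n\mathbf r}{1-n\mathbf r}$ and use Lemma \ref{Lem4}. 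This yields $\sum_{|\alpha|\ge1}|b_\alpha|r^\alpha\le k\frac{1-|a_0|^2}{1+\gamma}\frac{n\mathbf r}{1-n\mathbf r}$, which is exactly the input your Step 3 uses.

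\textbf{Step 4 does not prove the sharpness claim,} although it reproduces the paper's own argument. The point $z=(nr,\ldots,nr)$ has $\|z\|_\infty=nr$, not $r$. For your pair, $a_\alpha=-A_m n^{-m}\frac{m!}{\alpha!}$ for $|\alpha|=m$. So at any polyradius with $\|r\|_\infty=\mathbf r$ the majorant is
\[
A_0+(1+k)\sum_m A_m\Bigl(\frac{r_1+\cdots+r_n}{n}\Bigr)^m\le A_0+(1+k)\sum_m A_m\mathbf r^m .
\]
The right-hand side is the majorant of the one-variable pair obtained by restricting to the diagonal, at radius $\mathbf r$. By Theorem F it is at most $1$ for $\mathbf r\le\frac{1+\gamma}{3+2k+\gamma}$. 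Hence nothing is violated for $\mathbf r$ slightly above $\frac{1+\gamma}{n(3+2k+\gamma)}$ when $n\ge 2$.

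In fact the sharpness assertion is false for $n\ge2$. Take $k=0$, which forces $g\equiv 0$, and $\gamma=0$. The statement then becomes Bohr's inequality on $\mathbb{P}\Delta(0_n;1_n)$ with radius $\frac{1}{3n}$. Boas and Khavinson, however, proved that the $n$-dimensional Bohr radius exceeds $\frac{1}{3\sqrt n}$, which is larger than $\frac{1}{3n}$.
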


\subsection{\bf {Lemma}} The following lemma is needed for the proof of Theorem \ref{Th4}.
\begin{lem}\label{Lem6} Suppose that $h(z)=\sum_{m=0}^{\infty}\sum_{|\alpha|=m} a_{\alpha}z^{\alpha}$ and $g(z)=\sum_{m=0}^{\infty}\sum_{|\alpha|=m} b_{\alpha}z^{\alpha}$ are two holomorphic functions in $\mathbb{P}\Delta\left(0_n;1_n\right)$ such that $|D(g(z))|\leq k|D(h(z))|$ for some $k\in [0,1]$.
Then \begin{align*}
	\sum_{m=1}^{\infty}\sum\limits_{|\alpha|=m} |b_{\alpha}|^2{\bf{r}}^{m}\leq k^2 \sum_{m=1}^{\infty}\sum\limits_{|\alpha|=m} |a_{\alpha}|^2{\bf{r}}^{m},	
\end{align*}	
where $z=(z_1,z_2,\ldots,z_n)\in \mathbb{P}\Delta(0_n;1_n)$ and $r=(r_1,r_2,\ldots,r_n)$ such that ${\bf{r}}=\|z\|_{\infty}$.
\end{lem}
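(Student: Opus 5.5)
The plan is to reduce everything to one complex variable by restricting to complex lines through the origin, then average over the torus to recover the multi-index coefficients. Write $h(z)=\sum_{m\ge0}P_m(z)$ and $g(z)=\sum_{m\ge0}Q_m(z)$, where $P_m(z)=\sum_{|\alpha|=m}a_\alpha z^\alpha$ and $Q_m(z)=\sum_{|\alpha|=m}b_\alpha z^\alpha$ are the homogeneous expansions. The key observation is that the total derivative acts diagonally on homogeneous parts: by Euler's identity $D(Q_m)=mQ_m$, so
\begin{align*}
D(g(z))=\sum_{m\ge1} mQ_m(z), \qquad D(h(z))=\sum_{m\ge1} mP_m(z).
\end{align*}
In particular the constant terms $a_0,b_0$ play no role, which matches the sums in the statement starting at $m=1$.

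\textbf{Step 1 (slice).} Fix $u$ on the torus $\mathbb{T}^n=\{|u_j|=1\}$ and put $\phi_h(\zeta)=h(\zeta u)$, $\phi_g(\zeta)=g(\zeta u)$ for $\zeta\in\mathbb{D}$. These are holomorphic in $\mathbb{D}$ because $\zeta u\in\mathbb{P}\Delta(0_n;1_n)$, and
\begin{align*}
\zeta\phi_h'(\zeta)=D(h)(\zeta u), \qquad \zeta\phi_g'(\zeta)=D(g)(\zeta u).
\end{align*}
The hypothesis therefore gives $|\zeta\phi_g'(\zeta)|\le k|\zeta\phi_h'(\zeta)|$ on $\mathbb{D}$. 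Squaring and integrating over $|\zeta|=\rho<1$, Parseval yields
\begin{align*}
\sum_{m\ge1} m^2|Q_m(u)|^2\rho^{2m}\le k^2\sum_{m\ge1} m^2|P_m(u)|^2\rho^{2m}.
\end{align*}

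\textbf{Step 2 (average over the torus).} Integrate the last inequality over $u\in\mathbb{T}^n$ with respect to normalized Haar measure. The monomials $u^\alpha$ are orthonormal in $L^2(\mathbb{T}^n)$, so $\int|Q_m(u)|^2\,d\sigma=\sum_{|\alpha|=m}|b_\alpha|^2$, and likewise for $P_m$. The interchange of sum and integral is justified by absolute, uniform convergence on the compact set $\rho\,\mathbb{T}^n$. This gives
\begin{align*}
\sum_{m\ge1} m^2\sum_{|\alpha|=m}|b_\alpha|^2\rho^{2m}\le k^2\sum_{m\ge1} m^2\sum_{|\alpha|=m}|a_\alpha|^2\rho^{2m}
\end{align*}
for every $\rho\in[0,1)$.

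\textbf{Step 3 (remove the weight $m^2$).} Divide by $\rho$ and integrate from $0$ to $s$. Since $\int_0^s m^2\rho^{2m-1}\,d\rho=\tfrac{m}{2}s^{2m}$, the weight drops from $m^2$ to $m/2$. Repeating the same operation drops it to the constant $1/4$. The common factor then cancels, leaving
\begin{align*}
\sum_{m\ge1}\sum_{|\alpha|=m}|b_\alpha|^2 s^{2m}\le k^2\sum_{m\ge1}\sum_{|\alpha|=m}|a_\alpha|^2 s^{2m}, \qquad 0\le s<1.
\end{align*}
Finally choose $s=\sqrt{\bf r}$, where ${\bf r}=\|z\|_\infty<1$, so that $s^{2m}={\bf r}^m$. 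This is exactly the stated inequality.

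\textbf{Main obstacle.} The delicate point is the justification in Steps 2 and 3, namely termwise integration of series with nonnegative terms. Monotone convergence handles this once one knows that $\sum_m m^2\sum_{|\alpha|=m}|a_\alpha|^2\rho^{2m}$ converges for $\rho<1$. That convergence follows from Cauchy's estimates for $h$ on slightly larger polydisks.
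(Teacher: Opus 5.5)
Your proof is correct and follows essentially the paper's route: Euler's identity $D(P_m)=mP_m$, orthogonality of the monomials on the torus to obtain the $m^2$-weighted coefficient inequality, and then two radial integrations to remove the weight $m^2$. The differences are cosmetic. Your one-variable slicing step is subsumed by the torus average, since the paper integrates $|D(h)|^2$ over $|z_j|=r_j$ directly and then sets $r_j=\mathbf{r}$. Your explicit substitution $s=\sqrt{\mathbf{r}}$ makes precise the change of variable that the paper leaves implicit in its second integration.
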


\begin{proof}
By the given condition, we have
\begin{align}\label{L6.1}
|D(g(z))|^2\leq k^2|D(h(z))|^2.
\end{align}	
We expand $g(z)$ as the homogeneous polynomials
\begin{align*}
g(z)=\sum_{m=0}^{\infty}\sum_{|\alpha|=m} b_{\alpha}z^{\alpha}
=\sum_{m=1}^{\infty}P_m(z).
\end{align*}
where $P_{m}(z)$ is either identically zero or a homogeneous polynomial of degree $m\geq 1$. By the homogeneity of $P_m(z)$, we have
\begin{align*}
\sum\limits_{j=1}^n z_j \frac{\partial P_m(z)}{\partial z_j}=mP_m(z),
\end{align*}
where $m=1,2,\ldots$. Hence
\begin{align*}
D(g(z))=\sum\limits_{j=1}^n z_j \frac{\partial g(z)}{\partial z_j}=\sum\limits_{m=1}^{\infty} mP_m(z).
\end{align*}
For any multi-index $\nu=(\nu_1,\nu_2,\ldots,\nu_n)$, we have
\begin{align}\label{L6.2}
\int\limits_{0}^{2\pi}\ldots \int\limits_0^{2\pi} (e^{i\theta_1})^{\nu_1}\ldots (e^{i\theta_n})^{\nu_n}d\theta_1 \ldots d\theta_n=
\begin{cases}
0,& \nu\neq (0,0,\ldots,0),\\[2ex]
(2\pi)^n,& \nu=(0,0,\ldots,0).
\end{cases}
\end{align}

We set $z_j=r_je^{i\theta_j}\;(0\leq\theta_j\leq 2\pi)$, where $0<r_j<1$, $j=1,2,\ldots,n$ and consider the integration of $\left|D\left(h\left(r_1e^{i\theta_1},\ldots,r_ne^{i\theta_n}\right)\right)\right|^2$. Now using \eqref{L6.2}, we obtain
\begin{align}\label{L6.3}
\sum_{m=1}^{\infty}m^2\sum\limits_{|\alpha|=m} |a_{\alpha}|^2r^{2\alpha}=\frac{1}{(2\pi)^n} \int_{0}^{2\pi} \cdots \int_{0}^{2\pi} \left|D\left(h\left(r_1e^{i\theta_1},\ldots,r_ne^{i\theta_n}\right)\right)\right|^2 \, d\theta_1 \cdots d\theta_n.
\end{align}

Similarly, we have
\begin{align}\label{L6.4}
	\sum_{m=1}^{\infty}m^2\sum\limits_{|\alpha|=m} |b_{\alpha}|^2r^{2\alpha}=\frac{1}{(2\pi)^n} \int_{0}^{2\pi} \cdots \int_{0}^{2\pi} \left|D\left(g\left(r_1e^{i\theta_1},\ldots,r_ne^{i\theta_n}\right)\right)\right|^2 \, d\theta_1 \cdots d\theta_n.
\end{align}

Now using \eqref{L6.1}, \eqref{L6.3} and \eqref{L6.4}, we deduce that
\begin{align*}
\sum_{m=1}^{\infty}m^2\sum\limits_{|\alpha|=m} |b_{\alpha}|^2r^{2\alpha}\leq k^2 \sum_{m=1}^{\infty}m^2\sum\limits_{|\alpha|=m} |a_{\alpha}|^2r^{2\alpha}.	
\end{align*}

Letting $r_j\to {\bf{r}}$, where $j=1,2,\ldots,n$, we get
\begin{align}\label{L6.5}
	\sum_{m=1}^{\infty}m^2\sum\limits_{|\alpha|=m} |b_{\alpha}|^2{\bf{r}}^{2m-1}\leq k^2 \sum_{m=1}^{\infty}m^2\sum\limits_{|\alpha|=m} |a_{\alpha}|^2{\bf{r}}^{2m-1}.	
\end{align}

We integrate \eqref{L6.5} with respect to ${\bf{r}}$ and obtain
\begin{align*}
	\sum_{m=1}^{\infty}m\sum\limits_{|\alpha|=m} |b_{\alpha}|^2{\bf{r}}^{2m}\leq k^2 \sum_{m=1}^{\infty}m\sum\limits_{|\alpha|=m} |a_{\alpha}|^2{\bf{r}}^{2m},	
\end{align*}
i.e.,
\begin{align}\label{L6.6}
	\sum_{m=1}^{\infty}m\sum\limits_{|\alpha|=m} |b_{\alpha}|^2({\bf{r}}^2)^{m-1}\leq k^2 \sum_{m=1}^{\infty}m\sum\limits_{|\alpha|=m} |a_{\alpha}|^2({\bf{r}}^2)^{m-1}.	
\end{align}
We integrate \eqref{L6.6} once more and obtain
\begin{align*}
	\sum_{m=1}^{\infty}\sum\limits_{|\alpha|=m} |b_{\alpha}|^2{\bf{r}}^{m}\leq k^2 \sum_{m=1}^{\infty}\sum\limits_{|\alpha|=m} |a_{\alpha}|^2{\bf{r}}^{m}.	
\end{align*}
\end{proof}

\subsection{\bf {Proof of Theorem \ref{Th4}}}
\begin{proof}
	By the given condition $h$ is holomorphic in $\mathbb{P}\Delta\left(a(\gamma);r(\gamma)\right)$ such that $|h(z)|\leq 1$ for all $z\in \mathbb{P}\Delta\left(a(\gamma);r(\gamma)\right)$. Now by Lemma \ref{Lem4} we have
\begin{align}\label{Th-4.1}
|a_{\alpha}|\leq \frac{1-|a_0|^2}{1+\gamma}	
	\end{align}
	for multi-index $\alpha=(\alpha_1,\alpha_2,\ldots,\alpha_n)$ such that $|\alpha|\geq 1$. Using \eqref{Th-4.1}, we find that
\begin{align}\label{Th-4.2}
\sum\limits_{m=1}^{\infty}\sum\limits_{|\alpha|=m}|a_{\alpha}|^2{\bf{r}}^m\leq \left(\frac{1-|a_0|^2}{1+\gamma}\right)^2\sum\limits_{m=1}^{\infty}(n{\bf{r}})^m=\left(\frac{1-|a_0|^2}{1+\gamma}\right)^2\frac{n{\bf{r}}}{1-n{\bf{r}}}.
\end{align}

On the other hand by Lemma \ref{Lem6}, we have 
\begin{align}\label{Th-4.3}
	\sum_{m=1}^{\infty}\sum\limits_{|\alpha|=m} |b_{\alpha}|^2{\bf{r}}^{m}\leq k^2 \sum_{m=1}^{\infty}\sum\limits_{|\alpha|=m} |a_{\alpha}|^2{\bf{r}}^{m}.	
\end{align}

Applying \eqref{Th-4.2} and \eqref{Th-4.3}, we see that
\begin{align}\label{Th-4.4}
 \sum\limits_{m=1}^{\infty}\sum\limits_{|\alpha|=m}|b_{\alpha}|{\bf{r}}^m\leq& \sqrt{\sum\limits_{m=1}^{\infty}\sum\limits_{|\alpha|=m}|b_{\alpha}|^2{\bf{r}}^m}\times \sqrt{\sum\limits_{m=1}^{\infty}\sum\limits_{|\alpha|=m}{\bf{r}}^m}\\\leq&
 k\frac{1-|a_0|^2}{1+\gamma}\frac{n{\bf{r}}}{1-n{\bf{r}}}.\nonumber
 \end{align}
 
 Let $|a_0|=a\geq 0$. Now using \eqref{Th-4.1} and \eqref{Th-4.4}, we deduce that
 \begin{align*}
 \mathcal{N}_f(n{\bf{r}})=&\sum\limits_{m=0}^{\infty}\sum\limits_{|\alpha|=m}|a_{\alpha}|r^{\alpha}+\sum\limits_{m=1}^{\infty}\sum\limits_{|\alpha|=m}|b_{\alpha}|r^{\alpha}\\\leq&
 \frac{1-a^2}{1+\gamma}\sum\limits_{m=0}^{\infty}\sum\limits_{|\alpha|=m}{\bf{r}}^{m}+\sum\limits_{m=1}^{\infty}\sum\limits_{|\alpha|=m}|b_{\alpha}|{\bf{r}}^{m}\nonumber\\\leq&
 a+(1+k) \frac{1-a^2}{1+\gamma}\frac{n{\bf{r}}}{1-n{\bf{r}}}\nonumber\\=&
 1-\frac{1-a}{(1+\gamma)(1-n{\bf{r}})}\left(1+\gamma-n{\bf{r}}(1+\gamma+(1+a)(1+k))\right)\leq 1
 \end{align*}
 whenever $n{\bf{r}}\leq n{\bf{r}_0}(a)$, where
 \begin{align*}
 n{\bf{r}_0}(a)=\frac{1+\gamma}{1+\gamma+(1+a)(1+k)}.
 \end{align*}
 Since $a$ can be chosen arbitrarily close to $1$,
 it follows that $\mathcal{N}_f(n{\bf{r}})\leq 1$ provided that 
 \begin{align*}
 n{\bf{r}}\leq n{\bf{r}_0}(1)=\frac{1+\gamma}{3+2k+\gamma}.
 \end{align*}
 
 \medskip
 To prove the sharpness, we consider we consider the function $f_0=h_0+\ol{g_0}$ in $\mathbb{P}\Delta\left(a(\gamma);r(\gamma)\right)$, where
 \begin{align*}
 h_0(z)=\frac{a-\gamma-(1-\gamma)(z_1+z_2+\ldots+z_n)/n}{1-a\gamma-a(1-\gamma)(z_1+z_2+\ldots+z_n)/n}=A_0-\sum\limits_{m=1}^{\infty}A_m\left(\frac{z_1+\ldots+z_n}{n}\right)^m,
 \end{align*}
 where $a\in(0,1)$, $A_m(m\geq 0)$ are given by \eqref{Th-1.2} and $g_0=k\lambda(h_0(z)-A_0).$.
 
 Now for the point $z=(\tilde z,\tilde z,\ldots,\tilde z)$, we have 
 $h_0(z)=A_0-\sum_{m=1}^{\infty}A_m\tilde z^m$ and $g_0(z)=-k\lambda \sum_{m=1}^{\infty}A_m\tilde z^m$.
 For $\tilde z=nr$, where $\left|nr+\frac{\gamma}{1-\gamma}\right|<\frac{1}{1-\gamma}$, we find that
 \begin{align*}
 \mathcal{N}_{f_0}(n{\bf{r}})=&A_0+(1+k\lambda)\sum\limits_{m=1}^{\infty}A_m(nr)^m\\=&
 \frac{a-\gamma}{1-a\gamma}+(1+k\lambda)\frac{1-a^2}{a(1-a\gamma)}\sum\limits_{m=1}^{\infty}\left(\frac{a(1-\gamma)}{1-a\gamma}\right)^m(nr)^m\\=&
 \frac{a-\gamma}{1-a\gamma}+(1+k\lambda)\frac{1-a^2}{1-a\gamma}\frac{(1-\gamma)nr}{1-a\gamma-a(1-\gamma)nr}=
 1-\frac{1-a}{1-a\gamma}\Phi(nr),
 \end{align*}
 where
 \begin{align*}
 \Phi(nr)=1+\gamma-(1+k\lambda)\frac{(1+a)(1-\gamma)nr}{1-a\gamma-a(1-\gamma)nr},
 \end{align*}

 \begin{figure}[H]
 	\centering
 	\begin{tikzpicture}
 	\begin{axis}[
 		width=0.72\textwidth,
 		height=0.50\textwidth,
 		xlabel={$n{\bf r}$},
 		ylabel={$\Phi(n{\bf r})$},
 		xmin=0,
 		xmax=0.45,
 		ymin=-0.00,
 		ymax=1.40,
 		axis lines=left,
 		grid=both,
 		grid style={line width=.1pt,draw=gray!25},
 		major grid style={line width=.25pt,draw=gray!50},
 		samples=300,
 		domain=0:0.38,
 		clip=true,
 		restrict y to domain=-0.10:1.22,
 		legend pos=north east,
 		legend style={
 			draw=black,
 			fill=white,
 			font=\small,
 			cells={anchor=west},
 			/tikz/every even column/.append style={column sep=6pt}
 		},
 		legend image post style={line width=2pt},
 		axis line style={
 			very thick,
 			->,
 			>=Latex
 		},
 		tick style={black,thick},
 		]

 	\addplot[
 	blue!80!black,
 	line width=.5pt,
 	densely dashed,
 	line cap=round
 	]
 	{1+0.2 - (1+1*1)*(1+0.5)*(1-0.2)*x/(1-0.5*0.2-0.5*(1-0.2)*x)};
 	\addlegendentry{$a=0.50$}
 	
 	\addplot[
 	teal!80!black,
 	line width=.5pt,
 	densely dashdotted,
 	line cap=round
 	]
 	{1+0.2 - (1+1*1)*(1+0.8)*(1-0.2)*x/(1-0.8*0.2-0.8*(1-0.2)*x)};
 	\addlegendentry{$a=0.80$}
 	
 	\addplot[
 	orange!95!black,
 	line width=.5pt,
 	densely dotted,
 	line cap=round
 	]
 	{1+0.2 - (1+1*1)*(1+0.95)*(1-0.2)*x/(1-0.95*0.2-0.95*(1-0.2)*x)};
 	\addlegendentry{$a=0.95$}
 	
 	\addplot[
 	red!90!black,
 	line width=.8pt,
 	solid,
 	line cap=round
 	]
 	{1+0.2 - (1+1*1)*(1+0.995)*(1-0.2)*x/(1-0.995*0.2-0.995*(1-0.2)*x)};
 	\addlegendentry{$a\rightarrow1$}

 			\addplot[black, thin] coordinates {(0,0) (0.55,0)};

 			\addplot[
 			black,
 			dash pattern=on 5pt off 2pt,
 			line width=2pt
 			]coordinates {(0.2308,-0.15) (0.2308,1.25)};
 			\node[below] at (axis cs:0.2308,-0.15) {\small $n{\bf r}_0(1)=\dfrac{1+\gamma}{3+2k+\gamma}$};
 			
 			\node[circle,fill=red,inner sep=1.3pt] at (axis cs:0.2308,0.0107) {};
 			
 		\end{axis}
 	\end{tikzpicture}
 	\caption{Behaviour of $\Phi(n{\bf r})$ for $\gamma=0.2$, $k=1$, $\lambda\to 1$, and $a=0.50,\,0.80,\,0.95,\,a\to 1$. Each curve is strictly decreasing on $[0,1)$; as $a\to 1$ the curve touches $0$ precisely at $n{\bf r}_0(1)=\frac{1+\gamma}{3+2k+\gamma}$, confirming that the bound $\mathcal{N}_{f_0}(n{\bf r})\le 1$ fails exactly beyond this radius and hence that $n{\bf r}_0(1)$ is sharp.}
 	\label{fig:Phi-sharpness}
 \end{figure}
 which is strictly decreasing for $nr\in[0,1)$ and so for \begin{align*}
 nr>nr_0(1)=\frac{1+\gamma}{3+2k+\gamma},
 \end{align*}
  we have
 \begin{align*}
 \Phi(nr)<\Phi(nr_0(1))=1+\gamma-(1+k\lambda)\frac{(1+a)(1-\gamma^2)}{(1-a\gamma)(3+2k+\gamma)-a(1-\gamma^2)}\to 0
 \end{align*}
 as $a$ and $\lambda$ tend to $1$. Therefore $\mathcal{N}_{f_{0}}(nr)>1$ for $nr>nr_0(1)=\frac{1+\gamma}{3+2k+\gamma}$. This shows that the number $\frac{1+\gamma}{n(3+2k+\gamma)}$ cannot be replaced by a larger quantity. Hence the radius $\frac{1+\gamma}{3+2k+\gamma}$ is sharp.
\end{proof}

\vspace{5mm}

\noindent\textbf{Conflict of interest:} The authors declare that there is no conflict  of interest regarding the publication of this paper.\vspace{1.2mm}

\noindent {\bf Funding:} Not Applicable.\vspace{1.2mm}

\noindent\textbf{Data availability statement:}  Data sharing not applicable to this article as no datasets were generated or analysed during the current study.\vspace{1.2mm}

\noindent {\bf Authors' contributions:} All the authors have equal contributions in preparation of the manuscript.

\end{document}